\documentclass[pdflatex,sn-mathphys-num]{sn-jnl}

\usepackage{graphicx}
\usepackage{multirow}
\usepackage{amsmath,amssymb,amsfonts}
\usepackage{amsthm}
\usepackage{mathrsfs}
\usepackage[title]{appendix}
\usepackage{xcolor}
\usepackage{textcomp}
\usepackage{manyfoot}
\usepackage{booktabs}
\usepackage{listings}
\usepackage{etoolbox}
\usepackage{algorithm}
\usepackage{algorithmicx}
\usepackage{algpseudocode}
\usepackage{url}
\usepackage{geometry}
\usepackage{hypcap}
\usepackage{hyperref}
\usepackage{breakurl}
\usepackage{natbib}

\newcommand{\Input}{\item[\textbf{Input:}]}
\newcommand{\Output}{\item[\textbf{Output:}]}

\theoremstyle{thmstyleone}

\newtheorem{Example}{Example}

\newcommand{\balpha}{\boldsymbol{\alpha}}
\newcommand{\bpi}{\boldsymbol{\pi}}
\newcommand{\btheta}{\boldsymbol{\theta}}

\theoremstyle{thmstyletwo}

\newtheorem{remark}{Remark}

\theoremstyle{remark}

\newtheorem{Lemma}{Lemma}

\theoremstyle{thmstylethree}

\begin{document}

\title[Level-dependent quasi-birth-and-death processes: Application to cost analysis of multi-server systems]{Level-dependent quasi-birth-and-death processes: Application to cost analysis of multi-server systems}

\author*[1]{\fnm{M. Abdullah} \sur{Khokhar}}\email{MuhammadAbdullah.Khokhar@utas.edu.au}

\author[1]{\fnm{Ma{\l}gorzata M.} \sur{O'Reilly}}\email{Malgorzata.OReilly@utas.edu.au}

\author[2]{\fnm{Richard} \sur{Turner}}\email{Richard.Turner@utas.edu.au}

\affil[1]{\orgdiv{School of Natural Sciences, Discipline of Mathematics}, \orgname{University of Tasmania}, \orgaddress{\country{Australia}}}

\affil[2]{\orgdiv{School of Medicine}, \orgname{University of Tasmania}, \orgaddress{\country{Australia}}}

\abstract{Analysing costs is crucial for optimising the operational efficiency and resource allocation in systems evolving under uncertainty. In this paper, we study the distribution of costs associated with the evolution of level-dependent quasi-birth-and-death (LD-QBD) processes, which are useful in modelling many multi-server systems. We derive analytical expressions for the Laplace–Stieltjes transforms (LSTs) of the distribution of total costs accumulated during the times the LD-QBD processes spend in a specified set of levels. We present algorithms for the numerical evaluation of these LSTs. We also give memory efficient versions of the algorithms and discuss their algorithmic complexity. To assess the robustness of the distribution of costs with respect to model parameters, we develop algorithms for the sensitivity analysis of the corresponding LSTs.

To illustrate the application potential of our results, we construct LD-QBD example models for a finite capacity multi-server queueing systems with admissions policies including redirection, preemptive transfer, and guard-channel threshold. The analysis is based on a large dataset obtained from a tertiary referral hospital in Australia. We compute the long-run performance measures, the distribution of time until some number of beds become available following congestion, and the distribution of the associated costs. We present valuable insights into how the system behaves under the various policies. We also perform the sensitivity analysis of the distribution of costs with respect to model parameters.}

\keywords{quasi-birth-and-death process, multi-server systems, distribution of costs, Laplace-Stieltjes transforms, sensitivity analysis, hospital system.}

\maketitle
\noindent{\bf Mathematics Subject Classification:} 60K25, 60J22, 60J27, 60J28

\section{Introduction
}\label{sec:introduction}

Level-dependent quasi-birth-and-death processes (LD-QBDs) form an important class of continuous-time Markov chains and are widely used in the analysis of stochastic service systems, see Kharoufeh \cite{kharoufeh2011level} and Latouche and Ramaswami~\cite{latouche1999introduction}. These processes are characterised by a two-dimensional state space and a generator matrix. The state space consists of a level variable that describes the primary state of the system and a phase variable that captures supplementary information about the system. The block tri-diagonal structure of the generator matrix reflects the skip-free evolution of the level process.

A key aspect in the analysis of QBDs is the study of first-passage (hitting) times, which describe the time required for the process to reach a specified system level, as studied in Neuts~\cite{neuts1981matrix} and later also in Kim and Kim~\cite{Kim2013286}, G\'omez-Corral et al.~\cite{GomezCorral2026}, and Aksamit et al.~\cite{aksamit2024sensitivities}. These measures play a crucial role in analysing system performance, particularly in contexts where servers' utilisation or recovery periods are of primary importance. For instance, in finite-capacity service systems, it is of practical importance to determine the time required for the system to transition from highly congested states to lower occupancy levels. Such first-passage times provide insight into the transient behaviour of the system.

In service systems such as hospitals, a range of costs may accumulate during the system’s evolution, particularly when it operates near congestion, as discussed in Foley et al.~\cite{foley2011financial}, Hou et al.~\cite{Hou2022}, and Yadav et al.~\cite{Yadav2022}. These costs are often multifaceted and may include both direct and indirect components. Direct costs arise from the utilisation of clinical resources such as staff time, specialised equipment, and bed occupancy, which tend to increase with the number and complexity of patients in the system, see Drummond et al.~\cite{drummond2015methods}. Indirect costs may be associated with long waiting times, overcrowding, and diversion or transfer of patients to alternative facilities, which can adversely affect service quality and operational efficiency, see Kao et al.~\cite{Kao2015}.

These costs accumulate continuously during the evolution of the system, and may depend not only on how long the system remains in specified set of levels corresponding to congestion, but also on how intense that congestion is during the evolution. Two scenarios may have similar evolution times (e.g., the time until a certain number of beds become available in hospitals), yet one may involve much higher cost, resulting in higher financial burden. Therefore, studying the cost accumulated during the evolution of a system is equally important for the decision makers in addition to the corresponding time. Such information could help decision-makers analyse the performance of a system, assess the impact of congestion periods on costs, and compare alternative policies in terms of resource utilisation.

The objective of this paper is to study the distribution of total cost accumulated during the time a QBD process spends in some specified set of levels. We derive theoretical results to compute the Laplace–Stieltjes transforms (LSTs) of the distribution of cost. The LST provides a convenient way to characterise the distribution of cost, as it allows the total cost accumulation to be handled through tractable matrix-analytic expressions. This approach is particularly well suited to LD-QBD processes, where the structured, level-by-level evolution of the process enables the total cost to be analysed recursively, leading to efficient computational procedures. We construct the memory efficient algorithms to evaluate the LSTs of the distribution of costs, and discuss their algorithmic complexity. The LSTs can then be inverted using the numerical inversion methods in Den Iseger~\cite{DenIseger_2006}, summarised in Grant~\cite{Gusthesis}, to find the distribution of cost.

Moreover, we develop theoretical results and algorithms for the sensitivity of the LSTs of the distribution of cost with respect to model parameters. In our expressions, we assume that the level variable $X(t)$ is bounded from above by some finite $N$, but note that the results can be applied in the unbounded case as well, by applying standard truncation methods, e.g. see Phung-Duc et al.~\cite{Phung_Duc201046}. G\'omez-Corral and Lopez-Garcia~\cite{gomez2018perturbation} performed an initial sensitivity analysis of LD-QBDs. Aksamit et al.~\cite{aksamit2024sensitivities} built on their ideas and performed a sensitivity analysis of a wide range of key metrics.

To illustrate the application potential of our results, we construct examples of multi-server systems with different operational policies, using LD-QBD processes. In our numerical example, we then model a hospital system as a finite-capacity multi-server queueing system, which operates under different admission policies, such as redirection, preemptive transfer, and guard-channel threshold. The model parameters are based on a large dataset from a tertiary referral hospital in Australia, ensuring that the analysis reflects realistic system. As a first step, we evaluate some long-run performance measures to provide an understanding of the system behaviour under each policy. We then focus on the main objective of this paper and evaluate the distribution of the total cost accumulated until some required number of beds become available, together with a sensitivity analysis of these cost measures with respect to key parameters. We use the numerical results to provide insights into how different admission policies influence overall system behaviour and the key performance measures, including the time and cost accumulated during the system's evolution. Finally, we use the sensitivity results to further demonstrate that the cost measures respond in a stable way to changes in system parameters, providing confidence in the robustness of the proposed modelling approach and its potential usefulness for decision-making.

The following are our contributions in this paper. 
\begin{itemize}
    \item We derive theoretical results for the LSTs of the distribution of cost accumulated during the times a QBD process spends within specified set of levels.

    \item We construct memory efficient algorithms for the evaluation of these LSTs and discuss their algorithmic complexity.

    \item We also derive theoretical results and algorithms for the sensitivity analysis of these LSTs.

    \item We construct examples of LD-QBD models with the aim of illustrating their application potential to multi-server systems with various admission policies.  
    
    \item We then apply these models to a healthcare system in a numerical example based on a large dataset obtained from a tertiary referral hospital in Australia. We perform the sensitivity analysis of the distribution of cost and discuss the insights resulting from this analysis. 
\end{itemize}

The rest of the paper is structured as follows. In Section~\ref{sec:QBD-models}, we define a LD-QBD process and construct three examples of its potential application in multi-server systems. In Section~\ref{sec:KeyMetricsQBDs}, we briefly summarise relevant results from the literature for computing stationary distribution and first hitting times. In Section~\ref{sec:sojcosts}, we derive analytical expressions and algorithms to compute the LSTs of the distribution of costs. In Section~\ref{sec:EfficientAlgorithms}, we also give the memory efficient versions of the algorithms and discuss their algorithmic complexity. In Section~\ref{sec:SensAn} we derive the results for the sensitivity analysis of the distribution of costs. In Section~\ref{sec:application}, we illustrate the application of our results through the QBD example models in Section~\ref{sec:QBD-models}. We give our insights based on the numerical results and a comparison of the three models. This is followed by conclusions in Section~\ref{sec:Conclusion}.

\section{Level-dependent QBD (LD-QBD)}\label{sec:QBD-models}

A continuous-time Markov chain $\{(X(t),\varphi(t)):t\geq 0\}$ with level variable $X(t)$, phase variable $\varphi(t)$, and a two-dimensional state space $\mathcal{S}=
\{(n,i):n=0,1,2,\ldots,N;\ i=0,1,\ldots,K_n\}$ is called a Quasi-Birth-and-Death process (QBD) if the level variable $X(t)$ changes by at most one level. A QBD process is associated with an initial distribution vector 
\begin{eqnarray*}
    \balpha=[\balpha_n]_{n=0,1,2,\ldots,N},\  \balpha_n=[\alpha_{n,i}]_{i=1,\ldots K_n},
    \ 
    \alpha_{n,i}=\mathbb{P}(X(0)=n,\varphi(0)=i)
\end{eqnarray*}
and the transitions $(n,i)\to(n',j)$ are specified with the transition rates 
\begin{eqnarray*}
q_{(n,i)\to(n',j)}=
\left.\frac{d}{dt}
\mathbb{P}\big(X(t)=n',
\varphi(t)=j
\ | \ X(0)=n,\varphi(0)=i\big)\right\vert_{t=0}.
\end{eqnarray*}

The transition rates $q_{(n,i)\to(n^{'},j)}$ are 
recorded in a generator matrix ${\bf Q}$, which is a tri-diagonal matrix of block matrices ${\bf Q}={\bf Q}^{[n,n^{'}]}=[q_{(n,i)\to(n',j)}]_{i=0,1,2,...,K_n, j=0,1,2,...,K_{n^{'}}}$, such that

\begin{eqnarray}
	{\bf Q}
	=
	\begin{bmatrix}
		{\bf Q}^{[0,0]} & {\bf Q}^{[0,1]} & {\bf 0} & \cdots & \cdots & \cdots & \cdots & {\bf 0}\\
		{\bf Q}^{[1,0]} & {\bf Q}^{[1,1]} & {\bf Q}^{[1,2]} & {\bf 0} & \cdots & \cdots & \cdots & {\bf 0}\\
		{\bf 0} & {\bf Q}^{[2,1]} & {\bf Q}^{[2,2]} & {\bf Q}^{[2,3]} & \cdots &  \cdots & \cdots & {\bf 0}\\
		\vdots & \vdots & \vdots & \vdots & \cdots & \cdots & \cdots & \vdots\\
		{\bf 0} & {\bf 0} & {\bf 0} & {\bf 0} & \cdots & {\bf Q}^{[N-1,N-2]} & {\bf Q}^{[N-1,N-1]} & {\bf Q}^{[N-1,N]}\\
		{\bf 0} & {\bf 0} & {\bf 0} & {\bf 0} & \cdots & {\bf 0} & {\bf Q}^{[N,N-1]} & {\bf Q}^{[N,N]}
	\end{bmatrix}.
\label{blockmatrix}
\end{eqnarray}
Such a process is referred to as a Level-Dependent QBD (LD-QBD) if ${\bf Q}^{[n,n']}$ dependent on $n$ for $n=1,\ldots,N-1$.

Below, we describe three examples of LD-QBD processes relevant to multi-server systems, which we will later use in the numerical analysis in Section~\ref{sec:application}.

\begin{Example}\label{ex:QBDI_model}

Consider a two-class Erlang loss system  $(M/M/N/N)$ with arrival misclassification, that is, a queueing system with $N$ servers, no waiting area, and two types of customers (Type-A and Type-B) such that {\em upon arrival}, a Type-A customer is correctly classified as Type-A with probability $p_{AA}$, while a Type-B customer is misclassified as Type-A with probability $p_{BA}$. We assume that a true classification is known for all customers already in the system.

Customers of each type arrive according to independent Poisson processes with rates $\lambda_A$ and $\lambda_B$, and their service times are exponentially distributed with rates $\mu_A$ and $\mu_B$, respectively. An arriving customer is immediately assigned to an available server if there is one available. Here we assume that, if all servers are busy, the customer is redirected to an external service facility. This redirection policy models environments where queueing is prohibited due to capacity constraints, however alternative service is offered, e.g. to save lives.

\begin{figure}[h]
	\centering
	{\includegraphics[scale=0.2]{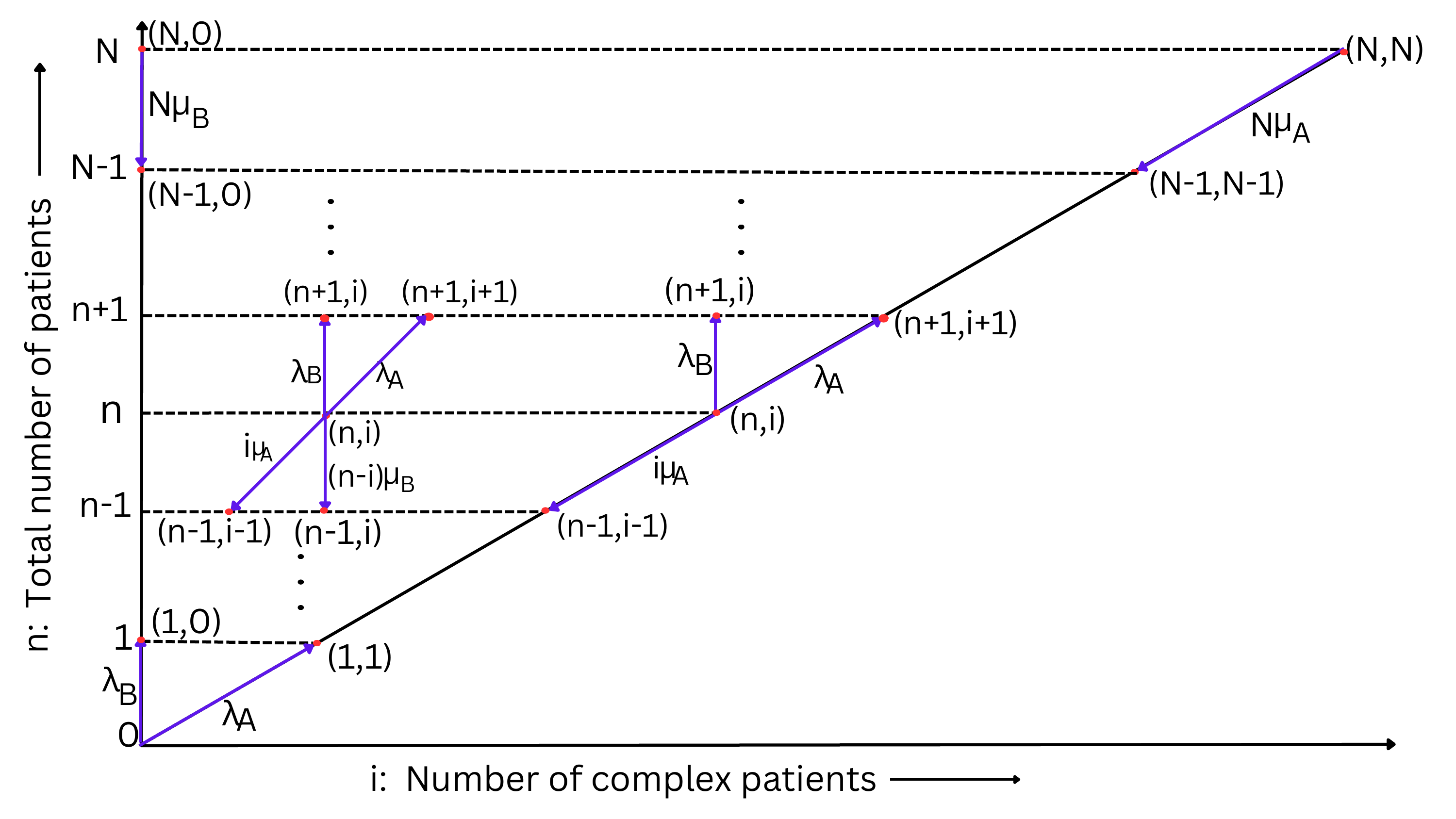}}
	\caption{Transition diagram of the model QBDI.}
	\label{TransitionQBD_I}
\end{figure}

We model the evolution of this system with a continuous-time LD-QBD process $\{(X(t),\varphi(t)):t\geq 0\}$ having a discrete state space given by $\mathcal{S}=
\{(n,i):n=0,1,2,\ldots,N;\ i=0,1,\ldots,n\}$, where the level variable $X(t)=n$ records the total number of customers in the system, and the phase variable $\varphi(t)=i$ records the total number of Type-A customers in the system, at time $t\geq 0$. The number of Type-B customers at time $t$ is therefore $X(t)-\varphi(t)=n-i$. We call this QBD process as QBD-I. It follows that the off-diagonals $q_{(n,i)\to(n',j)}$ within the blocks ${\bf Q}^{[n,n^{'}]}=[q_{(n,i)\to(n',j)}]$ of the generator ${\bf Q}=[{\bf Q}^{[n,n^{'}]}]$ are given by
\begin{eqnarray}\label{QBDI_model_parameters}
		q_{(n,i)\to(n',j)}
		&=&
		\left\{
		\begin{array}{lll}
			\lambda_A &\mbox{if }n^{'}=n+1,j=i+1\ & \&  \ n<N\\
			\lambda_B    &\mbox{if }n^{'}=n+1,j=i\ & \& \ n<N\\
			i\mu_A&\mbox{if }n^{'}=n-1,j=i-1\ & \& \ i>0, n>0\\
			(n-i)\mu_B&\mbox{if }n^{'}=n-1,j=i\ & \& \ i<n, n>0\\
			0 & \mbox{otherwise},
		\end{array}
		\right.\
	\end{eqnarray}
where $N$ is the upper boundary due to the finite size of the system, and the on-diagonals are 
\begin{eqnarray*}
q_{(n,i)\to(n,i)} = 
-\sum_{(n',j)\not=(n,i)  } q_{(n,i)\to(n',j)}.
\end{eqnarray*}
Figure~\ref{TransitionQBD_I}, illustrates the possible transitions between the various states of QBD-I.
\end{Example}

Throughout the examples that follow, we continue to refer to customers as Type-A and Type-B. Further, we distinguish between a customer's \emph{true} type (which corresponds to the service-time distribution) and the \emph{perceived} type assigned at arrival (which corresponds to an admission policy). Upon arrival, a customer's type is perceived according to two simple misclassification probabilities
\begin{eqnarray*}
\mathbb{P}(\text{perceived A}\mid \text{A})
&=&
p_{AA}, 
\qquad\quad\ \ 
\mathbb{P}(\text{perceived A}\mid \text{B}) = p_{BA},\\
\mathbb{P}(\text{perceived B}\mid \text{A})
&=&
1-p_{AA}, 
\qquad 
\mathbb{P}(\text{perceived B}\mid \text{B}) = 1-p_{BA},
\end{eqnarray*}
where $p_{AA}$ is the earlier defined probability of perceiving a Type-A customer as Type-A, $p_{BA}$ is the probability of perceiving a Type-B customer as Type-A, and the complementary probabilities $1-p_{AA}$ and $1-p_{BA}$ correspond to being perceived as Type-B.

\begin{Example}\label{ex:QBDII_model}

Next, consider a queueing system slightly similar to that described in Example~\ref{ex:QBDI_model} together with an admission policy defined as follows. 

First, we assume that if a {\em perceived} Type-A customer (Type-A customer correctly classified as Type-A, or Type-B customer misclassified as Type-A) arrives, and the system is at full capacity, then
\begin{itemize}  
\item a Type-B customer currently in the system is transferred to an external service facility to accommodate the incoming {\em perceived} Type-A customer, provided at least one Type-B customer is present;
\item  alternatively, if no Type-B customer is present, the incoming {\em perceived} Type-A customer is redirected to an external service facility. 
\end{itemize}
When the system is not full, all the arriving {\em perceived} Type-A customers are immediately admitted to available servers. Thus, the arriving {\em perceived} Type-A customers have preemptive priority over Type-B customers already in the system.

Second, we assume that all incoming {\em perceived} Type-B customers (Type-B customers correctly classified as Type-B, or Type-A customers misclassified as Type-B) are redirected when the system is full, regardless of the current composition of customers.

We model the evolution of such system using a continuous-time LD-QBD process $\{(X(t),\varphi(t)):t\geq 0\}$, with the same state space $\mathcal{S} = \{(n, i) : n = 0, 1, \ldots, N;\ i = 0, 1, \ldots, n\}$, where $X(t) = n$ is the total number of customers in the system and $\varphi(t) = i$ is the number of Type-A customers at time $t$. The number of Type-B customers is then $n - i$. We refer to this QBD process as QBD-II. The off-diagonal transition rates $q_{(n,i)\to(n',j)}$ within the generator blocks $\mathbf{Q}^{[n,n']} = [q_{(n,i)\to(n',j)}]$ are given by
\begin{eqnarray}\label{QBDII_model_parameters}
q_{(n,i)\to(n',j)}
&=&
\left\{
\begin{array}{lll}
\lambda_A &\mbox{if }n^{'}=n+1,j=i+1\ & \&  \ n<N\\
\lambda_B   &\mbox{if }n^{'}=n+1,j=i\ & \& \ n<N\\
i\mu_A&\mbox{if }n^{'}=n-1,j=i-1\ & \& \ i>0, n>0\\
(n-i)\mu_B&\mbox{if }n^{'}=n-1,j=i\ & \& \ i<n, n>0\\
p_{AA} \lambda_{A}  & \mbox{if }n^{'}=n,j=i+1\ & \&~  i<N, \ n=N\\
0 & \mbox{otherwise}.
\end{array}
\right.\
\end{eqnarray}
We note that when the system is full in state $(N, i)$ with $i < N$, then an arrival of a Type-A customer correctly classified as Type-A (or an arriving Type-B customer misclassified as Type-A) results in a transfer of a Type-B customer and a transition to state $(N, i+1)$. If the Type-A customer is misclassified as Type-B, or if a Type-B customer arrives and is correctly classified as Type-B, then the system remains in the same state due to redirection.
\end{Example}

\begin{Example}
\label{ex:QBDIII_model}
Finally, consider a generalisation of the previous examples to a two-class Erlang loss system with the admission policy that involves the following factors: (i) the {\em perceived} classification of the incoming customers, (ii) a capacity threshold $M_B$ for Type-B admissions, and (iii) a manager’s probabilistic decision when the system is highly occupied. 

First, we assume that when {\em perceived} Type‑A arrival occurs, then we apply the admission policy as described in Example~\ref{ex:QBDII_model}. 

Second, similar to the threshold‑based admission control policy studied by Yang et al.~\cite{yang2025optimal}, we introduce a guard‑channel threshold $M_B$ for Type‑B customers, where $M_B\in\{0,1,\ldots,N\}$ denotes a guard‑channel threshold that protects a portion $(N-M_B)$ of capacity for Type‑A arrivals. That is, we assume that when {\em perceived} Type‑B arrival occurs, then
\begin{itemize}
\item they are admitted provided that current occupancy is below $M_B$;

\item alternatively, if the system reaches the occupancy at least $M_B$, the {\em perceived} Type‑B arrival is admitted to the available servers with probability $p\in[0,1]$, or redirected to another facility with probability $(1-p)$.
\end{itemize}
When the system is at full capacity, all {\em perceived} Type-B arrivals are redirected.

We model the evolution of this system with a continuous-time LD-QBD process $\{(X(t),\varphi(t)):t\ge 0\}$ on the state space $
\mathcal{S}=\{(n,i): n=0,1,\ldots,N;\ i=0,1,\ldots,n\}$, where $X(t)=n$ records the total number of customers in the system at time $t$, and $\varphi(t)=i$ the number of Type-A customers. The number of Type-B customers is therefore $n-i$. We refer to this LD-QBD process as QBD-III. 
 
The off-diagonal transition rates $q_{(n,i)\to (n',j)}$ within the generator blocks $\mathbf{Q}^{[n,n']}=[q_{(n,i)\to(n',j)}]$ are then given by
\begin{eqnarray}\label{QBDIII_model_parameters}
q_{(n,i)\to(n',j)}
&=&
\left\{
\begin{array}{lll}
\lambda_A &\mbox{if }n^{'}=n+1,j=i+1\ & \&  \ n<N\\

\lambda_B   &\mbox{if }n^{'}=n+1,j=i\ & \& \ n<M_B\\

p\lambda_B   &\mbox{if }n^{'}=n+1,j=i\ & \& \ M_B\leq n<N\\

i\mu_A&\mbox{if }n^{'}=n-1,j=i-1\ & \& \ i>0, n>0\\
(n-i)\mu_B&\mbox{if }n^{'}=n-1,j=i\ & \& \ i<n, n>0\\
p_{AA} \lambda_{A}  & \mbox{if } n^{'}=n,j=i+1\ & \&~  i<N, \ n=N\\
0 & \mbox{otherwise}.
\end{array}
\right.\
\end{eqnarray}
We note that when Type-B arrival misclassified as Type-A occurs and the system is full, then there is no change in the state $(N,i)$ of the system, due to the admission policy. 
\end{Example}

\begin{remark}
The model QBD-III is a generalisation of the models QBD-I and QBD-II. Indeed, if we remove the guard-channel threshold $M_B$ and admission control probability $p$ for Type-B customers by setting $M_{B}=0$ and $p=1$, the QBD-III model reduces to QBD-II. Moreover, if in addition to taking $M_{B}=0$ and $p=1$, we also replace the transfer policy with redirection (which removes the transitions $(n,i)\to(n,i+1)$ for $n=N$ and $i<N$ from~\eqref{QBDII_model_parameters}), the resulting model reduces to QBD-I. Thus, $\text{QBD-I} \subset \text{QBD-II} \subset \text{QBD-III}$. Table~\ref{tab:comparison_QBDI_QBDII_QBDIII} shows a structural comparison of the three models.

\begin{table}[htbp]
\centering

\setlength{\tabcolsep}{3pt}

\begin{tabular}{l|c@{\hspace{0.5cm}}c@{\hspace{0.5cm}}c@{\hspace{0.5cm}}c@{\hspace{0.5cm}}c}
\hline

Model & Guard-channel & Type-B control & Misclassification & Redirection & Transfer \\

 & $(M_B)$ & $(p)$ & $(p_{AA},p_{BA})$ &  &  \\

\hline

QBD-I   & $\times$ & $\times$ & $\checkmark$ & $\checkmark$ & $\times$ \\

QBD-II  & $\times$ & $\times$ & $\checkmark$ & $\checkmark$ & $\checkmark$ \\

QBD-III & $\checkmark$ & $\checkmark$ & $\checkmark$ & $\checkmark$ & $\checkmark$ \\

\hline

\end{tabular}

\caption{Structural comparison of the models QBD-I, QBD-II, QBD-III.}
\label{tab:comparison_QBDI_QBDII_QBDIII}

\end{table}
\end{remark}

\section{Preliminaries}\label{sec:KeyMetricsQBDs}

In this section, we describe the key performance measures of LD-QBDs from the existing literature that may be helpful in the analysis of service systems.

\subsection{Stationary distribution}\label{StationaryDist}

Stationary distribution of a QBD process describes the proportion of time the process spends in each state as $t \to \infty$. We denote the stationary distribution vector by $\boldsymbol{\pi} = [\boldsymbol{\pi}_n]_{n = 0, 1, \ldots, N}$, such that vector $\boldsymbol{\pi}_n = [\pi_{n,i}]_{i = 0, 1, \ldots, K_n}$ records the value $\pi_{n,i}$, where $\pi_{n,i} = \lim_{t \to \infty} \mathbb{P}(X(t) = n, \varphi(t) = i)$ is the limiting probability of observing state $(n,i)$, interpreted as the long-run proportion of time spent in state $(n,i)$. To evaluate $\boldsymbol{\pi}$, we follow the approach of Aksamit et al.~\cite{aksamit2024sensitivities} and Grant~\cite{Gusthesis}, where $\boldsymbol{\pi}_N$ is evaluated first. The procedure is summarised in Algorithm~\ref{stationary_algorithm} in Appendix~\ref{AppendixA}.

\subsection{First hitting times}

First hitting time is the time that a process takes to visit a particular state for the first time during its evolution. This measure is crucial for understanding system responsiveness, as it quantifies how quickly the process reaches specified states.

We consider the Laplace-Stieltjes transform (LST) $ \widetilde{\bf G}^{n,n-k}(s)=[\widetilde G_{ij}^{n,n-k}(s)]_{i=1,\ldots,K_n;j=1,\ldots,K_{n-k}}$ of the distribution of time the process hits a lower level for the first time, such that $\widetilde G_{ij}^{n,n-k}(s)=\int_{0}^{\infty}e^{-st}g^{n,n-k}_{ij}(t)dt$ is the LST of the time to first hit a lower level $(n-k)$ and do so in phase $j=0,1,\ldots,K_{n-k}$, given start from level $n$ in phase $i=0,1,\ldots,K_n$, where for $1\leq k\leq n$, $g^{n,n-k}_{ij}(t)=\frac{\partial}{\partial t}
\mathbb{P}\big(\theta_{n-k}\leq t,\,\varphi(\theta_{n-k})=j \ | \ X(0)=n,\varphi(0)=i\big)$ is the corresponding probability density, and for any $n=0,1,\ldots,N$, the random variable $\theta_n=\inf \{t>0:X(t)=n\}$ is the first hitting time to level $n$. Denote ${\bf g}^{n,n-k}(t)=[g_{ij}^{n,n-k}(t)]_{i=1,\ldots,K_n;j=1,\ldots,K_{n-k}}$, $G_{ij}^{n,n-k}=\widetilde G_{ij}^{n,n-k}(0)$ and ${\bf G}^{n,n-k}=\widetilde{\bf G}^{n,n-k}(0)$. To evaluate $\widetilde {\bf G}^{n,n-k}(s)$, we follow the approach in Aksamit et al.~\cite{aksamit2024sensitivities}, summarised in Algorithm~\ref{Gs_algorithm} in Appendix~\ref{AppendixA}.

Next, we consider the LST matrix $\widetilde{\bf H}^{n,n+k}(s)=[\widetilde H_{ij}^{n,n+k}(s)]_{i=1,\ldots,K_n;j=1,\ldots,K_{n+k}}$ of the distribution of time to hit an upper level for the first time, such that
$$\widetilde H_{ij}^{n,n+k}(s)=\int_{0}^{\infty}e^{-st}h^{n,n+k}_{ij}(t)dt$$ is the LST of the time to first hit an upper level $(n+k)$ and do so in phase $j=0,1,\ldots,K_{n+k}$, given start from level $n$ in phase $i=0,1,\ldots,K_n$, where for $1\leq k\leq N-n$, $$h^{n,n+k}_{ij}(t)=\frac{\partial}{\partial t}
\mathbb{P}\big(\theta_{n+k}\leq t,\,\varphi(\theta_{n+k})=j \ | \ X(0)=n,\varphi(0)=i\big)$$ is the corresponding probability density. Denote 
${\bf h}^{n,n+k}(t)=[h_{ij}^{n,n+k}(t)]_{i=1,\ldots,K_n;j=1,\ldots,K_{n+k}}$, $H_{ij}^{n,n+k}=\widetilde H_{ij}^{n,n+k}(0)$ and ${\bf H}^{n,n+k}=\widetilde{\bf H}^{n,n+k}(0)$. To evaluate $\widetilde{\bf H}^{n,n+k}(s)$, we follow the approach in Aksamit et al.~\cite{aksamit2024sensitivities}, summarised in Algorithm~\ref{Hs_algorithm} in Appendix~\ref{AppendixA}.

\section{Costs accumulated during times spent within specified levels}\label{sec:sojcosts}

We build on the ideas in~\cite{bean2013stochastic,aksamit2024sensitivities,Gusthesis,samuelson2020construction}, and develop expressions for the Laplace-Stieltjes transform (LST) of the distribution of total cost accumulated during the time the process spends in a specified set of levels $\mathcal{A}$, during its evolution. 

Assume that costs (or rewards) accrue at a rate $c(n,i) \geq 0$ per unit time in state $(n,i)$, and let ${\bf C}_n = \mathrm{diag}(c(n,i))_i$ for $n = 0, 1, \ldots, N$. For a set of levels of interest (desirable or undesirable) $\mathcal{A}\subset\{0,1,\ldots,N\}$, $C_{\mathcal{A}}(t)=\int_{u=0}^t c(X(u),\varphi(u))\times I(X(u)\in\mathcal{A})du$ gives the total cost accumulated at time $t$, where $I(\cdot)$ is an indicator function. For $1\leq k\leq n$,
\begin{eqnarray}
c^{n,n-k}_{\mathcal{A};i,j}(z)&=&\frac{\partial}{\partial z}
\mathbb{P}\big(
C_{\mathcal{A}}(\theta_{n-k})
\leq z,
\,\varphi(\theta_{n-k})=j \ | \ X(0)=n,\varphi(0)=i\big)
\end{eqnarray}
is the probability density of the total cost accumulated at the time $\theta_{n-k}$ at which the process hits level $(n-k)$ for the first time and does so in phase $j=0,1,\ldots,K_{n-k}$, given start from level $n$ in phase $i=0,1,\ldots,K_n$. Denote, ${\bf c}^{n,n-k}_{\mathcal A}(z)=[c^{n,n-k}_{\mathcal A; i,j}(z)]_{i=1,\ldots,K_n;j=1,\ldots,K_{n-k}}$.

Further, we define probability matrices ${\bf C}^{n,n-k}_{\mathcal A}(z)=\int_{u=0}^z {\bf c}^{n,n-k}_{\mathcal A}(u)du=[C^{n,n-k}_{\mathcal A; i,j}(z)]_{i=1,\ldots,n;j=1,\ldots,n-k}$, where $C^{n,n-k}_{\mathcal A; i,j}(z)$ is the probability that the total cost accumulated at the time $\theta_{n-k}$ at which the process hits level $(n-k)$ for the first time and does so in phase $j=0,1,\ldots,K_{n-k}$, given start from level $n$ in phase $i=0,1,\ldots,K_n$, is smaller or equal to $z$. We define the LST matrix of the distribution of the total cost accumulated during time spent in specified levels in the set $\mathcal{A}$ during a sample path corresponding to $G_{ij}^{n,n-k}$ (discussed earlier) as $\widetilde {\bf C}_{\mathcal{A}}^{n,n-k}(s)=[\widetilde C_{\mathcal{A};ij}^{n,n-k}(s)]_{i=1,\ldots,K_n;j=1,\ldots,K_{n-k}},$
such that $\widetilde C_{\mathcal{A};ij}^{n,n-k}(s)=\int_{z=0}^{\infty}
e^{-sz}c^{n,n-k}_{\mathcal{A};i,j}(z)dz$ is the LST of the distribution of the total cost accumulated during the times spent within the set $\mathcal{A}$ as recorded at the moment the process first visits the lower level $(n-k)$ and does so in phase $j=0,1,\ldots,K_{n-k}$, given start from level $n$ in phase $i=0,1,\ldots,K_n$. Note that $\widetilde {\bf C}_{\mathcal{A}}^{n,n-k}(0)={\bf C}_{\mathcal{A}}^{n,n-k}$. To evaluate $\widetilde {\bf C}_{\mathcal{A}}^{n,n-k}(s)$, we apply Lemma~\ref{lem:CsLST} below. A simple implementation of this result is presented in Algorithm~\ref{Cs_algorithm}, and its two memory efficient alternatives are Algorithms~\ref{alg:singlepair_C_n_nminusk}~\&~\ref{alg:multiplepair_C_n_nminusk} presented later in Section~\ref{sec:EfficientAlgorithms}.

\begin{Lemma}
\label{lem:CsLST}
We have,
\begin{eqnarray}
\widetilde{\bf C}_{\mathcal{A}}^{n,n-k}(s)&=&
\widetilde{\bf C}_{\mathcal{A}}^{n,n-1}(s)
\widetilde{\bf C}_{\mathcal{A}}^{n-1,n-2}(s)
\times
\cdots
\times
\widetilde{\bf C}_{\mathcal{A}}^{n-k+1,n-k}(s),
\\
\widetilde{\bf C}_{\mathcal{A}}^{N,N-1}(s)&=&
-({\bf Q}^{[N,N]}-s{\bf C}_N\times I(N\in\mathcal{A}))^{-1}
{\bf Q}^{[N,N-1]},
\end{eqnarray} 
and for $n=N-1,\ldots,n-k+1$,
\begin{eqnarray}
\widetilde{\bf C}_{\mathcal{A}}^{n,n-1}(s)&=&
-({\bf Q}^{[n,n]}-s{\bf C}_n\times I(n\in\mathcal{A})
+{\bf Q}^{[n,n+1]}\widetilde{\bf C}_{\mathcal{A}}^{n+1,n}(s)
)^{-1}
{\bf Q}^{[n,n-1]}. 
\end{eqnarray}
\end{Lemma}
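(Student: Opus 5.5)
The plan has three parts: the product formula for $\widetilde{\bf C}_{\mathcal{A}}^{n,n-k}(s)$ via the strong Markov property and skip-free levels, and the recursion for the one-level-down transforms via a first-step argument at level $n$. Throughout, we use the joint transform of cost and phase, $\mathbb{E}\big[e^{-sC_{\mathcal{A}}(\theta_{m})}I(\varphi(\theta_m)=j)\mid X(0)=n,\varphi(0)=i\big]$, which is exactly $\widetilde C_{\mathcal{A};ij}^{n,m}(s)$.

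\textbf{Product formula.} Since the level process is skip-free, any path from level $n$ to level $n-k$ must first hit $n-1$, then $n-2$, and so on, so $\theta_{n-1}<\theta_{n-2}<\cdots<\theta_{n-k}$. The cost is additive over these segments:
\begin{eqnarray*}
C_{\mathcal{A}}(\theta_{n-k})=\sum_{m=1}^{k}\big(C_{\mathcal{A}}(\theta_{n-m})-C_{\mathcal{A}}(\theta_{n-m+1})\big),
\end{eqnarray*}
with $\theta_n:=0$ for the first term. Apply the strong Markov property at $\theta_{n-m+1}$, conditioning on the phase $\varphi(\theta_{n-m+1})$. The increment over the next segment is then distributed as $C_{\mathcal{A}}(\theta_{n-m})$ for the process started at level $n-m+1$ in that phase, and it is independent of the past. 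The exponential $e^{-s(\cdot)}$ factorises over the segments, and summing over the intermediate phases gives the matrix product.

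\textbf{Boundary level $N$.} At level $N$, the process stays until it jumps down, since there are no up-transitions. Condition on the first jump out of state $(N,i)$. The sojourn is exponential with rate $-q_{ii}$ and accrues cost $c(N,i)I(N\in\mathcal{A})$ per unit time. A sojourn $\tau$ therefore contributes $\mathbb{E}e^{-s c\tau}=\frac{-q_{ii}}{-q_{ii}+sc}$, and the next move is within level $N$ with probability $q_{ij}/(-q_{ii})$ or down to $(N-1,j)$ with probability $q^{[N,N-1]}_{ij}/(-q_{ii})$. Writing $\widetilde{\bf C}=\widetilde{\bf C}_{\mathcal{A}}^{N,N-1}(s)$ and $D={\rm diag}(-q_{ii}+s c(N,i)I(N\in\mathcal{A}))$, first-step analysis gives
\begin{eqnarray*}
D\widetilde{\bf C}=({\bf Q}^{[N,N]}+{\rm diag}(-q_{ii}))\widetilde{\bf C}+{\bf Q}^{[N,N-1]}.
\end{eqnarray*}
This rearranges to $-({\bf Q}^{[N,N]}-s{\bf C}_N I(N\in\mathcal{A}))\widetilde{\bf C}={\bf Q}^{[N,N-1]}$, which is the stated formula.

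\textbf{Levels $n<N$.} The same first-step analysis at level $n$ now has a third option: a jump up to $(n+1,j)$ at rate ${\bf Q}^{[n,n+1]}_{ij}$. After such a jump, the process must return to level $n$. By the strong Markov property at the up-jump and at the return time $\theta_n$, the cost of the excursion has transform $\widetilde{\bf C}_{\mathcal{A}}^{n+1,n}(s)$, after which the process restarts at level $n$. The first-step equation becomes
\begin{eqnarray*}
-({\bf Q}^{[n,n]}-s{\bf C}_nI(n\in\mathcal{A}))\widetilde{\bf C}^{n,n-1}={\bf Q}^{[n,n+1]}\widetilde{\bf C}^{n+1,n}\widetilde{\bf C}^{n,n-1}+{\bf Q}^{[n,n-1]},
\end{eqnarray*}
which rearranges to the claimed recursion. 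Backward induction from $N$ then defines every $\widetilde{\bf C}_{\mathcal{A}}^{n,n-1}(s)$.

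\textbf{Main obstacle.} The delicate step is justifying that the matrix being inverted is nonsingular and that the first-step equation has a unique solution equal to the transform, rather than some other solution. Two routes are available. The first is to argue directly that
\begin{eqnarray*}
{\bf M}={\bf Q}^{[n,n]}-s{\bf C}_nI(n\in\mathcal{A})+{\bf Q}^{[n,n+1]}\widetilde{\bf C}^{n+1,n}(s)
\end{eqnarray*}
is the generator of a transient (killed) process on level $n$. For $s\ge0$ its off-diagonal entries are nonnegative, and its row sums are $\le 0$ because $\widetilde{\bf C}^{n+1,n}(s)$ is substochastic. It has strict deficit through ${\bf Q}^{[n,n-1]}$, assuming level $n-1$ is reachable from every phase, as in the irreducible models of Section~\ref{sec:QBD-models}. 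It is therefore invertible with $-{\bf M}^{-1}\ge0$. The second route is to write the transform as a series over the number of returns to level $n$, namely the Neumann series $\sum_{r\ge0}(\cdot)^r$ of the embedded substochastic matrix, which converges to the inverse; I would use this route if the first is awkward. For complex $s$ with positive real part, the same bounds hold entrywise in modulus.
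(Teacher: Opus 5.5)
Your argument is correct. The product formula is handled the same way in both proofs: split at $\theta_{n-1}<\cdots<\theta_{n-k}$ and use the Markov property. For the one-level-down matrices, however, your route differs from the paper's.

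\textbf{The paper's route.} It first argues infinitesimally that ${\bf Q}^{[n,n]}-s{\bf C}_nI(n\in\mathcal{A})$ generates the cost-weighted semigroup for sojourns in level $n$. It integrates $e^{({\bf Q}^{[n,n]}-s{\bf C}_nI(n\in\mathcal{A}))t}$ over $t$ to get $-({\bf Q}^{[n,n]}-s{\bf C}_nI(n\in\mathcal{A}))^{-1}$, and multiplies by ${\bf Q}^{[N,N-1]}$ for the boundary case. For level $N-1$ it conditions on the number $m$ of excursions to level $N$ and sums the geometric series in ${\bf Q}^{[N-1,N]}\widetilde{\bf C}_{\mathcal{A}}^{N,N-1}(s)(-({\bf Q}^{[N-1,N-1]}-s{\bf C}_{N-1}I(N-1\in\mathcal{A}))^{-1})$. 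It then rearranges using $A^{-1}(I-X)^{-1}=((I-X)A)^{-1}$, and leaves general $n$ to ``similar arguments''.

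\textbf{Your route.} Your first-step analysis on the embedded jump chain needs only the scalar transform $\frac{-q_{ii}}{-q_{ii}+sc}$ of an exponential sojourn. This buys three things:
\begin{itemize}
\item It avoids the matrix-exponential step entirely.
\item It gives the recursion directly in its final form, uniformly for every $n<N$.
\item It makes explicit that the fixed-point equation identifies the transform only once ${\bf M}$ is shown nonsingular. You settle this by viewing ${\bf M}$ as the sub-generator of the censored level-$n$ process, killed on down-jumps and at rate $sc$.
\end{itemize}
The paper never addresses this point, although its geometric series needs exactly the same transience fact (spectral radius below one) to converge. Your fallback ``Neumann series over returns'' is essentially the paper's excursion-counting argument, now with that convergence justified.

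The price of your version is the explicit assumption that level $n-1$ is reachable from every phase, plus the modulus-domination step for complex $s$. Both are harmless for the irreducible models of Section~\ref{sec:QBD-models}.
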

\noindent{\bf Proof:} Suppose the process starts in the state $(n,i)$, that is, $(X(0),\varphi(0)) = (n,i)$, and we observe its evolution for $h$ units of time, where $h>0$ is small. First, we show that ${\bf Q}^{[n,n]}-s{\bf C}_n\times I(n\in\mathcal{A})$ is the generator of the LST of the distribution of the total cost accumulated while the process remains at level $n\in\mathcal{A}$. For this, we consider only the following two cases, as all other events occur with probability $o(h)$. 

\begin{itemize}
  \item The process remains in state $(n,i)$ until at least $h$ units of time. That is, during the time interval $[0,h]$, the process remains at level $n$ with no transition in phase. Probability for this event to occur is $e^{-q_{(n,i)(n,i)}h}$, and the cost accumulated during this time is $c(n,i)h$. The LST of the cost accumulated would be $e^{-sc(n,i)h}$. Multiply this by the probability and store the values in a diagonal matrix $e^{({\bf Q}^{[n,n]}-s{\bf C}_n\times I(n\in\mathcal{A}))h}$ as the $(i,i)^{th}$ entry such that $[e^{({\bf Q}^{[n,n]}-s{\bf C}_n\times I(n\in\mathcal{A}))h}]_{ii} = e^{-(q_{(n,i)(n,i)}+sc(n,i))h}$.\\
  It follows that 
  \begin{eqnarray}
   \label{eqn:Gen_ii_der}
      \left. \frac{d}{dh}\left[e^{({\bf Q}^{[n,n]}-s{\bf C}_n\times I(n\in\mathcal{A}))h}\right]_{ii}\right\vert_{h=0} = -(q_{(n,i)(n,i)}+sc(n,i)).
  \end{eqnarray}
  \item Alternatively, the process makes a single transition to state $(n,j)$, for some $j \neq i$, at some time $u$, $ 0 < u \leq h$, and then remains in state $(n,j)$ until time reaches $h$. 
    \begin{itemize}
      \item   Probability that the process remains in state $(n,i)$ at least until time $u \in (0,h]$ is $e^{-q_{(n,i)(n,i)}u}$, and the associated cost is $c(n,i)u$. The LST for this cost would be $e^{-sc(n,i)u}$.
      \item  Transition rate when the process jumps from state $(n,i)$ to $(n,j)$ is $q_{(n,i)(n,j)}$.
      \item  Probability that the process remains in state $(n,j)$ until the time reaches $h$, that is, for the remaining time $h-u$, is $e^{-q_{(n,j)(n,j)}(h-u)}$. The cost accumulated during this time is given by $c(n,j)(h-u)$ with the LST $e^{-sc(n,j)(h-u)}$. 
  \end{itemize}

Consequently, the probability density that the process jumps to the state $(n,j)$ after spending $u$ units of time in state $(n,i)$, and does so in phase $j$ until the total evolution time of the process reaches $h$ is given by 
\begin{eqnarray}
\label{eqn:totalprob}
  e^{-q_{(n,i)(n,i)}u} q_{(n,i)(n,j)} e^{-q_{(n,j)(n,j)}(h-u)},
\end{eqnarray}
and the total cost accumulated at time $h$ is $c(n,i)u+c(n,j)(h-u),$ with the LST 
\begin{eqnarray}
\label{eqn:LSTtotalcost}
  e^{-s\left(c(n,i)u + c(n,j)(h-u)\right)}.
\end{eqnarray}
We multiply~\ref{eqn:totalprob} and~\ref{eqn:LSTtotalcost}, integrate from $u=0$ to $h$ and store the resulting value in the matrix $e^{({\bf Q}^{[n,n]}-s{\bf C}_n\times I(n\in\mathcal{A}))h}$ as the $(i,j)^{th}$ entry, with
\begin{equation*}
\begin{aligned}
\left[e^{({\bf Q}^{[n,n]} - s{\bf C}_n I(n\in\mathcal{A}))h}\right]_{ij}
&= \int_{0}^{h}
e^{-q_{(n,i)(n,i)}u} \, q_{(n,i)(n,j)} \, e^{-q_{(n,j)(n,j)}(h-u)} \\
&\quad \times e^{-s\left(c(n,i)u + c(n,j)(h-u)\right)} \, du.
\end{aligned}
\end{equation*}

It then follows
 \begin{eqnarray}
 \label{eqn:Gen_ij_der}
      \left. \frac{d}{dh}\left[e^{({\bf Q}^{[n,n]}-s{\bf C}_n\times I(n\in\mathcal{A}))h}\right]_{ij}\right\vert_{h=0} 
    &=& q_{(n,i)(n,j)}.
  \end{eqnarray}
\end{itemize}
It is clear from~\eqref{eqn:Gen_ii_der} and~\eqref{eqn:Gen_ij_der} that ${\bf Q}^{[n,n]}-s{\bf C}_n\times I(n\in\mathcal{A})$ is the generator matrix of the LST of the distribution of cost accumulated while the process remains at level $n$. Therefore, the corresponding LST matrix of distribution of cost accumulated at time $t$ is given by
$$e^{({\bf Q}^{[n,n]}-s{\bf C}_n\times I(n\in\mathcal{A}))t},$$
and the LST matrix of the total cost is given by
\begin{eqnarray}
\int_{t=0}^{\infty}e^{
\left(
{\bf Q}^{[n,n]}-s{\bf C}_n\times I(n\in\mathcal{A})
\right)
t
}dt
&=&
-({\bf Q}^{[n,n]}-s{\bf C}_n\times I(n\in\mathcal{A}))^{-1}.
\end{eqnarray}
To find the LST matrix $\widetilde{\bf C}_{\mathcal{A}}^{N,N-1}(s)$ of the distribution of total cost accumulated at the first visit to level $N-1$, given the process starts at level $N$, we multiply the LST matrix of the total cost accumulated while the process remains at level $N$ and the rate matrix of transition from level $N$ to the level $N-1$, that is,
\begin{eqnarray}
    \widetilde{\bf C}_{\mathcal{A}}^{N,N-1}(s) = -({\bf Q}^{[N,N]}-s{\bf C}_N\times I(N\in\mathcal{A}))^{-1} {\bf Q}^{[N,N-1]}.
\end{eqnarray}
Further, suppose the process starts at level $N-1$ in some phase $i$ and after some time it hits the level $N-2$. During this evolution, the process may undergo multiple transitions to level $N$ prior to ultimately arriving at level $N-2$. To determine the LST matrix $\widetilde{\bf C}_{\mathcal{A}}^{N-1,N-2}(s)$ of the distribution of cost accumulated from the process's initial state at level $N-1$ to its first arrival at level $N-2$, we employ the following methodology. 

Initially, we assume that the process commences at level $N-1$, spends some time at that level, transitions to level $N$, resides there for a certain period, and subsequently returns to level $N-1$. By conditioning this sequence of transitions to occur `m' times, we can then proceed to calculate the LST matrix of the total cost as follows.
\begin{itemize}
    \item The LST matrix of the distribution of the total cost accumulated while the process remains at level $N-1$ is $-({\bf Q}^{[N-1,N-1]}-s{\bf C}_{N-1}\times I(N-1\in\mathcal{A}))^{-1}$.
    \item Rate matrix for the process to jump to level $N$ is ${\bf Q}^{[N-1,N]}$.
    \item The LST matrix of the distribution of the total cost accumulated when the process spends some time at level $N$ and then returns to level $N-1$ is $\widetilde{\bf C}_{\mathcal{A}}^{N,N-1}(s)$.
    \item Rate matrix for the process to jump from level $N-1$ to the level $N-2$ is ${\bf Q}^{[N-1,N-2]}$. 
\end{itemize}
By conditioning of having $m$ transitions to level $N$, given the process starts at level $N-1$, before ultimately hitting level $N-2$ for the first time, the LST matrix of the total cost accumulated is given by
\begin{equation*}
\begin{aligned}
\widetilde{\bf C}_{\mathcal{A}}^{N-1,N-2}(s) 
&= -({\bf Q}^{[N-1,N-1]} - s{\bf C}_{N-1} I(N-1\in\mathcal{A}))^{-1} \\
&\quad \times 
\sum_{m=0}^{\infty} 
\Big(
{\bf Q}^{[N-1,N]} 
\widetilde{\bf C}_{\mathcal{A}}^{N,N-1}(s) \\
&\quad \times 
\big(-({\bf Q}^{[N-1,N-1]} - s{\bf C}_{N-1} I(N-1\in\mathcal{A}))^{-1}\big)
\Big)^{m}
{\bf Q}^{[N-1,N-2]} \\
&= -({\bf Q}^{[N-1,N-1]} - s{\bf C}_{N-1} I(N-1\in\mathcal{A}))^{-1} \\
&\quad \times 
\Big(
I - {\bf Q}^{[N-1,N]} \widetilde{\bf C}_{\mathcal{A}}^{N,N-1}(s) \\
&\quad \times 
\big(-({\bf Q}^{[N-1,N-1]} - s{\bf C}_{N-1} I(N-1\in\mathcal{A}))^{-1}\big)
\Big)^{-1}  
{\bf Q}^{[N-1,N-2]} \\
&= -\Big[
\Big(
I - {\bf Q}^{[N-1,N]} \widetilde{\bf C}_{\mathcal{A}}^{N,N-1}(s) \\
&\quad \times 
\big(-({\bf Q}^{[N-1,N-1]} - s{\bf C}_{N-1} I(N-1\in\mathcal{A}))^{-1}\big)
\Big) \\
&\quad \times 
({\bf Q}^{[N-1,N-1]} - s{\bf C}_{N-1} I(N-1\in\mathcal{A}))
\Big]^{-1}
{\bf Q}^{[N-1,N-2]} \\
&= -\left(
{\bf Q}^{[N-1,N-1]}
- s{\bf C}_{N-1} I(N-1\in\mathcal{A})
+ {\bf Q}^{[N-1,N]} \widetilde{\bf C}_{\mathcal{A}}^{N,N-1}(s)
\right)^{-1} \\
&\quad \times {\bf Q}^{[N-1,N-2]}.
\end{aligned}
\end{equation*}

By similar arguments, we can show that for $n=N-1,\ldots,n-k+1$, the LST matrix of the distribution of total cost is given by 
\begin{eqnarray}
\widetilde{\bf C}_{\mathcal{A}}^{n,n-1}(s)
&=&
-({\bf Q}^{[n,n]}-s{\bf C}_n\times I(n\in\mathcal{A})
+{\bf Q}^{[n,n+1]}\widetilde{\bf C}_{\mathcal{A}}^{n+1,n}(s)
)^{-1}
{\bf Q}^{[n,n-1]}.
\end{eqnarray}
Finally, the overall cost for the process to reach level $n-k$ for the first time, given the process starts at level $n$, is determined by adding up the costs for each step along the way. This includes the cost to go from level $n$ to level $n-1$, then from level $n-1$ to level $n-2$, and so on, until reaching level $n-k$ from level $n-k+1$. Due to the Markov property of the process, it follows that the LST matrix $\widetilde{\bf C}_{\mathcal{A}}^{n,n-k}(s)$ of the distribution of total cost is given by 
\begin{eqnarray}
    \widetilde{\bf C}_{\mathcal{A}}^{n,n-k}(s)=
\widetilde{\bf C}_{\mathcal{A}}^{n,n-1}(s)
\widetilde{\bf C}_{\mathcal{A}}^{n-1,n-2}(s)
\times
\cdots
\times
\widetilde{\bf C}_{\mathcal{A}}^{n-k+1,n-k}(s).
\end{eqnarray}
This completes the proof. \rule{9pt}{9pt}

\begin{algorithm}
\caption{Evaluate $\widetilde{\bf C}_{\mathcal{A}}^{n,n-k}(s)$, $n=1,2,\ldots,N$}
\label{Cs_algorithm}
\begin{algorithmic}[1]

\Input $\mathbf{Q}, n, k$

\State Compute and store $\widetilde{\bf C}_{\mathcal{A}}^{N,N-1}(s)$ using
\begin{equation}
\widetilde{\bf C}_{\mathcal{A}}^{N,N-1}(s)
= -({\bf Q}^{[N,N]} - s{\bf C}_N I(N\in\mathcal{A}))^{-1}
{\bf Q}^{[N,N-1]}.
\label{eq_C_N_N-1}
\end{equation}

\State For $n = N-1,\ldots,n-k+1$, compute and store $\widetilde{\bf C}_{\mathcal{A}}^{n,n-1}(s)$ using
\begin{equation}
\begin{aligned}
\widetilde{\bf C}_{\mathcal{A}}^{n,n-1}(s)
&= -({\bf Q}^{[n,n]} - s{\bf C}_n I(n\in\mathcal{A})
+ {\bf Q}^{[n,n+1]}\widetilde{\bf C}_{\mathcal{A}}^{n+1,n}(s))^{-1}
{\bf Q}^{[n,n-1]}.
\end{aligned}
\label{eq:Cnnminus1}
\end{equation}

\State Compute $\widetilde{\bf C}_{\mathcal{A}}^{n,n-k}(s)$ using
\begin{equation}
\widetilde{\bf C}_{\mathcal{A}}^{n,n-k}(s)
= \widetilde{\bf C}_{\mathcal{A}}^{n,n-1}(s)
\widetilde{\bf C}_{\mathcal{A}}^{n-1,n-2}(s)
\cdots
\widetilde{\bf C}_{\mathcal{A}}^{n-k+1,n-k}(s).
\label{eq:Cnk}
\end{equation}

\end{algorithmic}
\end{algorithm}

Similarly, for $1\leq k\leq N-n$, we define
\begin{eqnarray}
c^{n,n+k}_{\mathcal{A};i,j}(z)&=&\frac{\partial}{\partial z}
\mathbb{P}\big(
C_{\mathcal{A}}(\theta_{n+k})
\leq z,
\,\varphi(\theta_{n+k})=j \ | \ X(0)=n,\varphi(0)=i\big)
,\\
\widetilde C_{\mathcal{A};ij}^{n,n+k}(s)&=&\int_{z=0}^{\infty}
e^{-sz}c^{n,n+k}_{\mathcal{A};i,j}(z)dz
,\\
\widetilde{\bf C}_{\mathcal{A}}^{n,n+k}(s)&=&[\widetilde C_{\mathcal{A};ij}^{n,n+k}(s)]_{i=1,\ldots,n;j=1,\ldots,n+k},
\end{eqnarray}
where $c^{n,n+k}_{\mathcal{A};i,j}(z)$ is the probability density of the total cost accumulated at the time $\theta_{n+k}$ at which the process hits level $(n+k)$ for the first time and does so in phase $j=0,1,\ldots,K_{n+k}$, given start from level $n$ in phase $i=0,1,\ldots,K_n$, and $\widetilde C_{\mathcal{A};ij}^{n,n+k}(s)$ is the LST of the distribution of the total cost accumulated during the times spent within the set $\mathcal{A}$, corresponding to $H_{ij}^{n,n+k}$. Denote ${\bf c}^{n,n+k}_{\mathcal A}(z)=[c^{n,n+k}_{\mathcal A; i,j}(z)]_{i=1,\ldots,K_n;j=1,\ldots,K_{n+k}}$. To find $\widetilde{\bf C}_{\mathcal{A}}^{n,n+k}(s)$, apply Lemma~\ref{lem:CsLST2} below, or its simple implementation in Algorithm~\ref{Cs_algorithm2}. Two memory efficient alternatives are Algorithms~\ref{alg:singlepair_C_n_nplusk}~\&~\ref{alg:multiplepair_C_n_nplusk} presented later in Section~\ref{sec:EfficientAlgorithms}.

\begin{Lemma}
\label{lem:CsLST2}
We have,
\begin{eqnarray}
\widetilde{\bf C}_{\mathcal{A}}^{n,n+k}(s)&=&
\widetilde{\bf C}_{\mathcal{A}}^{n,n+1}(s)
\widetilde{\bf C}_{\mathcal{A}}^{n+1,n+2}(s)
\times
\cdots
\times
\widetilde{\bf C}_{\mathcal{A}}^{n+k-1,n+k}(s),
\end{eqnarray}
where
\begin{eqnarray}
\widetilde{\bf C}_{\mathcal{A}}^{0,1}(s)&=&
-({\bf Q}^{[0,0]}-s{\bf C}_0\times I(0\in\mathcal{A}))^{-1}
{\bf Q}^{[0,1]},
\end{eqnarray} 
and for $n=1,\ldots,n+k-1$,
\begin{eqnarray}
\widetilde{\bf C}_{\mathcal{A}}^{n,n+1}(s)&=&
-({\bf Q}^{[n,n]}-s{\bf C}_n\times I(n\in\mathcal{A})
+{\bf Q}^{[n,n-1]}\widetilde{\bf C}_{\mathcal{A}}^{n-1,n}(s)
)^{-1}
{\bf Q}^{[n,n+1]}.
\end{eqnarray}
\end{Lemma}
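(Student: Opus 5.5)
The plan is to mirror the proof of Lemma~\ref{lem:CsLST}, with the direction of the level process reversed. The key ingredient carries over verbatim: for any level $n$, the matrix ${\bf Q}^{[n,n]}-s{\bf C}_n\times I(n\in\mathcal{A})$ is the generator of the (defective) LST of the cost accumulated while the process remains at level $n$. This was established in the proof of Lemma~\ref{lem:CsLST} through the small-$h$ analysis of the diagonal and off-diagonal entries, and it did not depend on $n$. Hence $-({\bf Q}^{[n,n]}-s{\bf C}_n\times I(n\in\mathcal{A}))^{-1}$ is the LST matrix of the cost accumulated during a single sojourn in level $n$, indexed by the phases at entry and exit.

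I would start at the lower boundary. From level $0$ the process can only leave upwards, so the first passage to level $1$ consists of one sojourn in level $0$ followed by a jump governed by ${\bf Q}^{[0,1]}$. This gives the stated formula for $\widetilde{\bf C}_{\mathcal{A}}^{0,1}(s)$.

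For $n\ge 1$ I would condition on the number $m\ge 0$ of excursions down to level $n-1$ before the first jump to level $n+1$. Each such excursion contributes:
\begin{itemize}
\item a sojourn at level $n$, with LST matrix $-({\bf Q}^{[n,n]}-s{\bf C}_n I(n\in\mathcal{A}))^{-1}$;
\item a jump down, with rate matrix ${\bf Q}^{[n,n-1]}$;
\item the return to level $n$, with LST matrix $\widetilde{\bf C}_{\mathcal{A}}^{n-1,n}(s)$.
\end{itemize}
By the strong Markov property at the successive entrance times, and since costs are additive over disjoint time intervals (so LSTs multiply), we get
\[
\widetilde{\bf C}_{\mathcal{A}}^{n,n+1}(s)=-{\bf M}_n^{-1}\sum_{m\ge0}\big({\bf Q}^{[n,n-1]}\widetilde{\bf C}_{\mathcal{A}}^{n-1,n}(s)(-{\bf M}_n^{-1})\big)^m{\bf Q}^{[n,n+1]},
\]
where ${\bf M}_n={\bf Q}^{[n,n]}-s{\bf C}_n I(n\in\mathcal{A})$. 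Summing the geometric series and factoring as in the proof of Lemma~\ref{lem:CsLST} yields $-({\bf M}_n+{\bf Q}^{[n,n-1]}\widetilde{\bf C}_{\mathcal{A}}^{n-1,n}(s))^{-1}{\bf Q}^{[n,n+1]}$. Induction on $n$ upward from $0$ then gives all one-step matrices.

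The product formula for $\widetilde{\bf C}_{\mathcal{A}}^{n,n+k}(s)$ follows from skip-free upward movement. To reach level $n+k$ from level $n$, the process must first hit $n+1$, then $n+2$, and so on. Applying the strong Markov property at $\theta_{n+1},\dots,\theta_{n+k-1}$ splits $C_{\mathcal{A}}(\theta_{n+k})$ into conditionally independent increments, given the phases at these hitting times, so the transforms compose by matrix multiplication.

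The main obstacle is justifying the geometric series and the invertibility. For $s\ge 0$ (or $\Re s\ge 0$) I would argue that ${\bf Q}^{[n,n-1]}\widetilde{\bf C}_{\mathcal{A}}^{n-1,n}(s)(-{\bf M}_n^{-1})$ is entrywise dominated by its value at $s=0$. That value is a substochastic matrix recording the probabilities of returning to level $n$, by phase, before hitting $n+1$. Its spectral radius is less than $1$ provided level $n+1$ is reached with probability one from level $n$, which holds for an irreducible finite chain. The same domination argument shows that ${\bf M}_n$ is a nonsingular subgenerator, because ${\bf Q}^{[n,n]}$ is strictly diagonally dominant in some row whenever ${\bf Q}^{[n,n+1]}\ne{\bf 0}$ or ${\bf Q}^{[n,n-1]}\ne{\bf 0}$, and irreducibility propagates this across the level.
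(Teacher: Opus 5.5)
Your proposal is correct and is essentially the paper's argument. The paper proves this lemma only by pointing to the proof of Lemma~\ref{lem:CsLST}, and your steps (the level-$0$ boundary case, conditioning on the number of excursions down to level $n-1$, summing and refactoring the geometric series, and chaining via skip-freeness) are exactly that argument run upward, plus a convergence and invertibility justification that the paper omits. One small correction within that extra justification: ${\bf Q}^{[n,n-1]}{\bf H}^{n-1,n}(-{\bf Q}^{[n,n]})^{-1}$ need not itself be substochastic, but it is similar to the substochastic return-probability matrix $(-{\bf Q}^{[n,n]})^{-1}{\bf Q}^{[n,n-1]}{\bf H}^{n-1,n}$, so your spectral-radius bound is unaffected.
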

\noindent{\bf Proof:} The result follows by arguments analogous to Lemma~\ref{lem:CsLST}. \rule{9pt}{9pt}

\begin{algorithm}
\caption{Evaluate $\widetilde{\bf C}_{\mathcal{A}}^{n,n+k}(s)$, $n=0,1,\ldots,N-1$}
\label{Cs_algorithm2}
\begin{algorithmic}[1]

\Input $\mathbf{Q}, n, k$

\State Compute $\widetilde{\bf C}_{\mathcal{A}}^{0,1}(s)$ using
\begin{equation}
\widetilde{\bf C}_{\mathcal{A}}^{0,1}(s)
= -({\bf Q}^{[0,0]} - s{\bf C}_0 I(0\in\mathcal{A}))^{-1}
{\bf Q}^{[0,1]}.
\label{eq_C_0_1}
\end{equation}

\State For $n=1,\ldots,n+k-1$, compute $\widetilde{\bf C}_{\mathcal{A}}^{n,n+1}(s)$ using
\begin{equation}
\begin{aligned}
\widetilde{\bf C}_{\mathcal{A}}^{n,n+1}(s)
&= -({\bf Q}^{[n,n]} - s{\bf C}_n I(n\in\mathcal{A})
+ {\bf Q}^{[n,n-1]}\widetilde{\bf C}_{\mathcal{A}}^{n-1,n}(s))^{-1}
{\bf Q}^{[n,n+1]}.
\end{aligned}
\label{eq:Cnnplus1}
\end{equation}

\State Compute $\widetilde{\bf C}_{\mathcal{A}}^{n,n+k}(s)$ using
\begin{equation}
\widetilde{\bf C}_{\mathcal{A}}^{n,n+k}(s)
= \widetilde{\bf C}_{\mathcal{A}}^{n,n+1}(s)
\widetilde{\bf C}_{\mathcal{A}}^{n+1,n+2}(s)
\cdots
\widetilde{\bf C}_{\mathcal{A}}^{n+k-1,n+k}(s).
\label{eq:Cnk_plus}
\end{equation}

\end{algorithmic}
\end{algorithm}

\section{Efficient Algorithms~\ref{alg:singlepair_C_n_nminusk}~\&~\ref{alg:multiplepair_C_n_nminusk} and~\ref{alg:singlepair_C_n_nplusk}~\&~\ref{alg:multiplepair_C_n_nplusk}}
\label{sec:EfficientAlgorithms}

The computations in Algorithms~\ref{Cs_algorithm}~\&~\ref{Cs_algorithm2} require storing the transition rate block matrices ${\bf Q}^{[n,n']}$. To store a single transition rate block matrix ${\bf Q}^{[n,n']}$, we require a memory of $16 K_n K_{n'}$ bytes. Altogether, storing all blocks of ${\bf Q}$ requires $\sum_{n=0}^{N} \sum_{n'=n, n \pm 1} 16 K_n K_{n'}$ bytes. However, the matrix ${\bf Q}$ is typically very sparse, usually containing only $d$, where $3 \leq d \leq 6$, non-zero elements per row (Baumann and Sandmann~\cite{Baumann2012413}). An efficient approach is to store the matrices ${\bf Q}^{[n,n']}$ in sparse format, where only the positions of non-zero entries and their corresponding values are recorded. Each non-zero value requires $16$ bytes to store ($4$ bytes for each of the row and column indices and $8$ bytes for each non-zero value). In this way, each matrix ${\bf Q}^{[n,n']}$ would require a maximum memory of $16 \cdot d \cdot K_n$ bytes.

To compute $\widetilde{\bf C}_{\mathcal{A}}^{n,n-k}(s)$ using Algorithm~\ref{Cs_algorithm}, we need to store at least the following matrices $\widetilde{\bf C}_{\mathcal{A}}^{n,n-1}(s)$, $\widetilde{\bf C}_{\mathcal{A}}^{n-1,n-2}(s)$, $\dots$, $\widetilde{\bf C}_{\mathcal{A}}^{n-k+1,n-k}(s)$. These intermediate matrices are usually not sparse and are stored in dense format. Therefore, to store them in dense format we require $8 \sum_{\nu=0}^{k-1} K_{\,n-\nu}\,K_{\,n-\nu-1}$ bytes. This memory requirement applies to a single evaluation of $\widetilde{\bf C}_{\mathcal{A}}^{n,n-k}(s)$. However, in practice, the analysis often requires computing $\widetilde{\bf C}_{\mathcal{A}}^{n,n-k}(s)$ for multiple pairs $(n,n-k)$. If all intermediate matrices are retained for reuse, the memory footprint grows proportionally with the number of evaluations, which can quickly become infeasible for large $N$ or when many repetitions are needed. This motivates the development of memory-efficient implementations that minimize storage by using incremental updates or by recomputing matrices on demand.

Therefore, we give two memory-efficient alternatives to Algorithm~\ref{Cs_algorithm}, referred to as Algorithm~\ref{alg:singlepair_C_n_nminusk} and Algorithm~\ref{alg:multiplepair_C_n_nminusk}. Algorithm~\ref{alg:singlepair_C_n_nminusk} is suitable when the computation of $\widetilde{\bf C}_{\mathcal{A}}^{n,n-k}(s)$ is needed for a single pair $(n,n-k)$ only. This algorithm requires less computing time than Algorithm~\ref{Cs_algorithm}, as it avoids storing all intermediate matrices by computing the product incrementally and discarding each matrix after use. 

Algorithm~\ref{alg:multiplepair_C_n_nminusk}, while applicable for computing $\widetilde{\bf C}_{\mathcal{A}}^{n,n-k}(s)$ for a single pair $(n,n-k)$, is particularly suitable for scenarios where computations for multiple pairs $(n_j,n_j-k_j)$ are required, for $j = 1, 2, \ldots, \xi$. Instead of storing all intermediate matrices, the algorithm partitions the levels from $N$ down to $r$, where $r$ is the lowest level we are certain the computation will not go below, into $u$ blocks. Each block has size $M = \lfloor (N - r + 0.5)/u \rfloor$, with the last block of size $\leq M$.

For each block, only one checkpoint matrix $\widetilde{\bf C}_{\mathcal{A}}^{N-\widetilde{k}M,\,N-\widetilde{k}M-1}(s)$ (the top matrix) is stored for $\widetilde{k} = 0, \ldots, u-1$. To compute $\widetilde{\bf C}_{\mathcal{A}}^{n_j,n_j-k_j}(s)$, given $N-\widetilde{k}M \leq n_j < N-(\widetilde{k}+1)M$, the algorithm retrieves the corresponding checkpoint matrix $\widetilde{\bf C}_{\mathcal{A}}^{N-\widetilde{k}M,\,N-\widetilde{k}M-1}(s)$ and recomputes the required intermediate matrices while maintaining the required recursive product. The total memory required for $K$ computations is given by 
$8 \sum_{\widetilde{k}=0}^{u-1} K_{N-\widetilde{k}M} K_{N-\widetilde{k}M-1} + 8 \sum_{j=1}^{\xi} K_{n_j} K_{n_j-k_j}$. For a single pair $(n,n-k)$, the memory required is given by $8\big(K_n K_{n-k} + \sum_{\widetilde{k}=0}^{u-1} K_{N-\widetilde{k}M} K_{N-\widetilde{k}M-1} \big)$,
which is of course greater than when using Algorithm~\ref{alg:singlepair_C_n_nminusk}.

We apply a similar strategy for computing $\widetilde{\mathbf{C}}_{\mathcal{A}}^{n,n+k}(s)$ and give two memory-efficient alternatives to Algorithm~\ref{Cs_algorithm2}, referred to as Algorithm~\ref{alg:singlepair_C_n_nplusk}~\&~\ref{alg:multiplepair_C_n_nplusk}. Algorithm~\ref{alg:singlepair_C_n_nplusk} is suitable for a single pair $(n,n+k)$ computation, whereas Algorithm~\ref{alg:multiplepair_C_n_nplusk} is designed for memory-efficient computation across multiple pairs.

\begin{algorithm}[H]
\caption{Evaluate $\widetilde{\bf C}_{\mathcal{A}}^{n,n-k}(s)$ for a single pair $(n,n-k)$}
\label{alg:singlepair_C_n_nminusk}
\begin{algorithmic}[1]

\Input ${\bf Q}, n, k$
\Output $\widetilde{\bf C}_{\mathcal{A}}^{n,n-k}(s)$

\State Compute $\widetilde{\bf C}_{\mathcal{A}}^{N,N-1}(s)=
-({\bf Q}^{[N,N]}-s{\bf C}_N\times I(N\in\mathcal{A}))^{-1}
{\bf Q}^{[N,N-1]}$

\State Set ${\bf B}=\widetilde{\bf C}_{\mathcal{A}}^{N,N-1}(s)$.

\For{$i = N-1, \ldots, n$}
\State Compute $\widetilde{\bf C}_{\mathcal{A}}^{i,i-1}(s)=
-({\bf Q}^{[i,i]}-s{\bf C}_i\times I(n\in\mathcal{A})
+{\bf Q}^{[i,i+1]}{\bf B})^{-1}
{\bf Q}^{[i,i-1]}$

\State Update ${\bf B}=\widetilde{\bf C}_{\mathcal{A}}^{i,i-1}(s)$

\EndFor

\For{$i = n-1, \ldots, n-k+1$}
\State Compute $\widetilde{\bf C}_{\mathcal{A}}^{i,i-1}(s)=
-({\bf Q}^{[i,i]}-s{\bf C}_i\times I(n\in\mathcal{A})
+{\bf Q}^{[i,i+1]}\widetilde{\bf C}_{\mathcal{A}}^{i+1,i}(s)
)^{-1}
{\bf Q}^{[i,i-1]}$

\State Update ${\bf B}={\bf B} \times \widetilde{\bf C}_{\mathcal{A}}^{i,i-1}(s)$

\EndFor

\State \Return $\widetilde{\bf C}_{\mathcal{A}}^{n,n-k}(s) = {\bf B}$

\end{algorithmic}
\end{algorithm}

\begin{algorithm}[H]
\caption{Evaluate $\widetilde{\bf C}_{\mathcal{A}}^{n,n-k}(s)$ for multiple pairs $(n_j,n_j-k_j)$}
\label{alg:multiplepair_C_n_nminusk}
\begin{algorithmic}[1]

\Input ${\bf Q}, (n_j,k_j)_{j=1}^{\xi}$
\Input Threshold $r$, number of blocks $u$, block size $M$
\Output $\widetilde{\bf C}_{\mathcal{A}}^{n_j,n_j-k_j}(s)$ for all $j=1,\ldots,\xi$

\State Compute $\widetilde{\bf C}_{\mathcal{A}}^{N,N-1}(s)=
-({\bf Q}^{[N,N]}-s{\bf C}_N I(N\in\mathcal{A}))^{-1}
{\bf Q}^{[N,N-1]}$

\State Store ${\bf B}_0=\widetilde{\bf C}_{\mathcal{A}}^{N,N-1}(s)$

\For{$\widetilde{k} = 1, \ldots, u-1$}
    \For{$i = N-(\widetilde{k}-1)M-1, \ldots, N-\widetilde{k}M$}
        \State Compute $\widetilde{\bf C}_{\mathcal{A}}^{i,i-1}(s)=
        -({\bf Q}^{[i,i]}-s{\bf C}_i I(i\in\mathcal{A})
        +{\bf Q}^{[i,i+1]}\widetilde{\bf C}_{\mathcal{A}}^{i+1,i}(s))^{-1}
        {\bf Q}^{[i,i-1]}$
    \EndFor

    \State Store ${\bf B}_{\widetilde{k}}=
    \widetilde{\bf C}_{\mathcal{A}}^{N-\widetilde{k}M,\,N-\widetilde{k}M-1}(s)$
\EndFor

\For{$j = 1, \ldots, \xi$}

    \State Determine $\widetilde{k}(n_j)=\widetilde{k}$ such that
    $N-\widetilde{k}M \leq n_j < N-(\widetilde{k}+1)M$

    \State Set ${\bf B} = {\bf B}_{\widetilde{k}}$

    \For{$i = N-\widetilde{k}M-1, \ldots, n_j$}
        \State Compute $\widetilde{\bf C}_{\mathcal{A}}^{i,i-1}(s)=
        -({\bf Q}^{[i,i]}-s{\bf C}_i I(i\in\mathcal{A})
        +{\bf Q}^{[i,i+1]} {\bf B})^{-1}
        {\bf Q}^{[i,i-1]}$

        \State Update ${\bf B}=\widetilde{\bf C}_{\mathcal{A}}^{i,i-1}(s)$
    \EndFor

    \For{$i = n_j-1, \ldots, n_j-k_j+1$}
        \State Compute $\widetilde{\bf C}_{\mathcal{A}}^{i,i-1}(s)=
        -({\bf Q}^{[i,i]}-s{\bf C}_i I(i\in\mathcal{A})
        +{\bf Q}^{[i,i+1]}\widetilde{\bf C}_{\mathcal{A}}^{i+1,i}(s))^{-1}
        {\bf Q}^{[i,i-1]}$

        \State Update ${\bf B}={\bf B}\,\widetilde{\bf C}_{\mathcal{A}}^{i,i-1}(s)$
    \EndFor

    \State \Return $\widetilde{\bf C}_{\mathcal{A}}^{n_j,n_j-k_j}(s)={\bf B}$

\EndFor

\end{algorithmic}
\end{algorithm}

\begin{algorithm}[H]
\caption{Evaluate $\widetilde{\bf C}_{\mathcal{A}}^{n,n+k}(s)$ for a single pair $(n,n+k)$}
\label{alg:singlepair_C_n_nplusk}
\begin{algorithmic}[1]

\Input ${\bf Q}, n, k$
\Output $\widetilde{\bf C}_{\mathcal{A}}^{n,n+k}(s)$

\State Compute $\widetilde{\bf C}_{\mathcal{A}}^{0,1}(s)=
-({\bf Q}^{[0,0]}-s{\bf C}_0 I(0\in\mathcal{A}))^{-1}
{\bf Q}^{[0,1]}$

\State Set ${\bf B}=\widetilde{\bf C}_{\mathcal{A}}^{0,1}(s)$

\For{$i = 1, \ldots, n$}
\State Compute $\widetilde{\bf C}_{\mathcal{A}}^{i,i+1}(s)=
-({\bf Q}^{[i,i]}-s{\bf C}_i I(i\in\mathcal{A})
+{\bf Q}^{[i,i-1]}{\bf B})^{-1}
{\bf Q}^{[i,i+1]}$

\State Update ${\bf B}=\widetilde{\bf C}_{\mathcal{A}}^{i,i+1}(s)$
\EndFor

\For{$i = n+1, \ldots, n+k-1$}
\State Compute $\widetilde{\bf C}_{\mathcal{A}}^{i,i+1}(s)=
-({\bf Q}^{[i,i]}-s{\bf C}_i I(i\in\mathcal{A})
+{\bf Q}^{[i,i-1]}\widetilde{\bf C}_{\mathcal{A}}^{i-1,i}(s))^{-1}
{\bf Q}^{[i,i+1]}$

\State Update ${\bf B}={\bf B}\,\widetilde{\bf C}_{\mathcal{A}}^{i,i+1}(s)$
\EndFor

\State \Return $\widetilde{\bf C}_{\mathcal{A}}^{n,n+k}(s) = {\bf B}$

\end{algorithmic}
\end{algorithm}

\begin{algorithm}[H]
\caption{Evaluate $\widetilde{\bf C}_{\mathcal{A}}^{n,n+k}(s)$ for multiple pairs $(n_j,n_j+k_j)$}
\label{alg:multiplepair_C_n_nplusk}
\begin{algorithmic}[1]

\Input ${\bf Q}, (n_j,k_j)_{j=1}^{\xi}$
\Input Threshold $r$, number of blocks $u$, block size $M$
\Output $\widetilde{\bf C}_{\mathcal{A}}^{n_j,n_j+k_j}(s)$ for all $j=1,\ldots,\xi$

\State Compute $\widetilde{\bf C}_{\mathcal{A}}^{0,1}(s)=
-({\bf Q}^{[0,0]}-s{\bf C}_0 I(0\in\mathcal{A}))^{-1}
{\bf Q}^{[0,1]}$

\State Store ${\bf B}_0=\widetilde{\bf C}_{\mathcal{A}}^{0,1}(s)$

\For{$\widetilde{k} = 1, \ldots, u-1$}
    \For{$i = (\widetilde{k}-1)M+1, \ldots, \widetilde{k}M$}
        \State Compute $\widetilde{\bf C}_{\mathcal{A}}^{i,i+1}(s)=
        -({\bf Q}^{[i,i]}-s{\bf C}_i I(i\in\mathcal{A})
        +{\bf Q}^{[i,i-1]}\widetilde{\bf C}_{\mathcal{A}}^{i-1,i}(s))^{-1}
        {\bf Q}^{[i,i+1]}$
    \EndFor

    \State Store ${\bf B}_{\widetilde{k}}=
    \widetilde{\bf C}_{\mathcal{A}}^{\widetilde{k}M,\widetilde{k}M+1}(s)$
\EndFor

\For{$j = 1, \ldots, \xi$}

    \State Determine $\widetilde{k}(n_j)=\widetilde{k}$ such that$    (\widetilde{k}-1)M \leq n_j < \widetilde{k}M$

    \State Set ${\bf B} = {\bf B}_{\widetilde{k}}$

    \For{$i = (\widetilde{k}-1)M+1, \ldots, n_j$}
        \State Compute $\widetilde{\bf C}_{\mathcal{A}}^{i,i+1}(s)=
        -({\bf Q}^{[i,i]}-s{\bf C}_i I(i\in\mathcal{A})
        +{\bf Q}^{[i,i-1]}{\bf B})^{-1}
        {\bf Q}^{[i,i+1]}$

        \State Update ${\bf B}=\widetilde{\bf C}_{\mathcal{A}}^{i,i+1}(s)$
    \EndFor

    \For{$i = n_j+1, \ldots, n_j+k_j-1$}
        \State Compute $\widetilde{\bf C}_{\mathcal{A}}^{i,i+1}(s)=
        -({\bf Q}^{[i,i]}-s{\bf C}_i I(i\in\mathcal{A})
        +{\bf Q}^{[i,i-1]}\widetilde{\bf C}_{\mathcal{A}}^{i-1,i}(s))^{-1}
        {\bf Q}^{[i,i+1]}$

        \State Update ${\bf B}={\bf B}\,\widetilde{\bf C}_{\mathcal{A}}^{i,i+1}(s)$
    \EndFor

    \State \Return $\widetilde{\bf C}_{\mathcal{A}}^{n_j,n_j+k_j}(s) = {\bf B}$

\EndFor

\end{algorithmic}
\end{algorithm}

\subsection{Algorithmic complexity: Algorithm~\ref{alg:singlepair_C_n_nminusk}~\&~\ref{alg:singlepair_C_n_nplusk}}

The first step (computing $\widetilde{\mathbf{C}}_{\mathcal{A}}^{N,N-1}(s)$) in Algorithm~\ref{alg:singlepair_C_n_nminusk} involves an inversion of size $K_N \times K_N$ and multiplication with a matrix of size $K_N \times K_{N-1}$, giving a complexity $O(K_N^3)$ for inversion and $O(K_N^2 K_{N-1})$ for multiplication. For the loop $N-1$ $\to$ $n$, each iteration involves an inversion of a $K_i \times K_i$ matrix and two multiplications with $K_{i+1}$ and $K_{i-1}$, costing $O(K_i^3 + K_i^2 K_{i+1} + K_i^2 K_{i-1})$. Similarly, the second loop $n-1$ $\to$ $n-k+1$ adds an extra multiplication for updating ${\bf B}$, so each step costs $O(K_i^3 + K_i^2 K_{i+1} + 2K_i^2 K_{i-1})$. Therefore, the overall complexity is
\begin{eqnarray*}
O\Bigg(\sum_{i=n-k+1}^{N} \big(K_i^3 + K_i^2 K_{i+1} + 2K_i^2 K_{i-1}\big)\Bigg),
\end{eqnarray*}
If $K_i = m$ for all $i$, this simplifies to $O((N-n+k)m^3)$. In the worst-case scenario when $n=N$ and $n-k=0$, this reduces to $O(N m^3)$. Similarly, the overall complexity for Algorithm~\ref{alg:singlepair_C_n_nplusk} is
\begin{eqnarray*}
    O\Bigg(\sum_{i=0}^{n+k-1} \big(K_i^3 + K_i^2 K_{i-1} + 2K_i^2 K_{i+1}\big)\Bigg),
\end{eqnarray*}
which simplifies to $O((n+k)m^3)$ when $K_i = m$ for all $i$, and reduces to $O(N m^3)$ in the worst-case scenario.

\subsection{Algorithmic complexity: Algorithm~\ref{alg:multiplepair_C_n_nminusk}~\&~\ref{alg:multiplepair_C_n_nplusk}}

The initial steps in Algorithm~\ref{alg:multiplepair_C_n_nminusk} compute checkpoint matrices for $u$ blocks, each requiring inversions and multiplications similar to Algorithm~\ref{alg:singlepair_C_n_nminusk}, giving complexity $O\big(\sum_{i=N-(u-1)M}^{N} (K_i^3 + K_i^2 K_{i+1} + K_i^2 K_{i-1})\big)$. For each pair $(n_j, k_j)$, the Algorithm~\ref{alg:multiplepair_C_n_nminusk} costs $O\big(\sum_{i=n_j}^{N-\widetilde k(n_j) M -1} (K_i^3 + K_i^2 K_{i+1} + K_i^2 K_{i-1})\big)$ for the loop $N-\widetilde k M -1$ $\to$ $n_j$, and it costs $O\big(\sum_{i=n_j-k_j+1}^{n_j-1} (K_i^3 + K_i^2 K_{i+1} + 2K_i^2 K_{i-1})\big)$ for the loop $n_j-1$ $\to$ $n_j-k_j+1$. Therefore, for $\xi$ pairs, the overall complexity is
\begin{equation*}
\begin{aligned}
O\Bigg(&\sum_{i=N-(u-1)M}^{N}\left(K_i^3 + K_i^2 K_{i+1} + K_i^2 K_{i-1}\right) \\
&+ \sum_{j=1}^{\xi}\sum_{i=n_j-k_j+1}^{N-\widetilde{k}(n_j)M -1}
\left(K_i^3 + K_i^2 K_{i+1} + 2K_i^2 K_{i-1}\right)\Bigg),
\end{aligned}
\end{equation*}
which simplifies to $O\big(u M m^3 + \sum_{j=1}^{\xi}(N-\widetilde{k}(n_j)M - n_j + k_j)m^3\big)$ when $K_i = m$ for all $i$, and reduces to $O(u M m^3 + \xi N m^3)$ in the worst case scenario. Similarly, the overall complexity of Algorithm~\ref{alg:multiplepair_C_n_nplusk} is
\begin{equation*}
\begin{aligned}
O\Bigg(&\sum_{i=0}^{(u-1)M}\left(K_i^3 + K_i^2 K_{i-1} + K_i^2 K_{i+1}\right) \\
&+ \sum_{j=1}^{\xi}\sum_{i=(\widetilde{k}(n_j)-1)M+1}^{n_j+k_j-1}
\left(K_i^3 + K_i^2 K_{i-1} + 2K_i^2 K_{i+1}\right)\Bigg),
\end{aligned}
\end{equation*}
which simplifies to $O\big(u M m^3 + \sum_{j=1}^{\xi}(n_j+k_j-\widetilde{k}(n_j)M)m^3\big)$ when $K_i = m$ for all $i$, and reduces to $O(u M m^3 + \xi N m^3)$ in the worst case scenario.

\section{Sensitivity analysis}\label{sec:SensAn}

Aksamit et al.~\cite{aksamit2024sensitivities} developed algorithms for the sensitivities of the stationary distribution $\bpi_{n}(\btheta)$, first hitting times to lower levels ${\bf G}^{n,n-k}(\btheta)$, and first hitting times to higher levels ${\bf H}^{n,n+k}(\btheta)$, where $\btheta$ is the vector of model parameters.  Below, we extend these to develop algorithms for the sensitivities of the LSTs $\widetilde{\bf C}_{\mathcal{A}}^{n,n-k}(s,\btheta)$ and $\widetilde{\bf C}_{\mathcal{A}}^{n,n+k}(s,\btheta)$ of distribution of total cost accumulated during the times spent within the set $\mathcal{A}$ as recorded at the moment the process first visits the lower level $(n-k)$ or a higher level $(n+k)$ and does so in phase $j=0,1,\ldots,K_{n-k}$ or $j=0,1,\ldots,K_{n+k}$ respectively, given start from level $n$ in phase $i=0,1,\ldots,K_n$. To evaluate the sensitivity of $\widetilde{\bf C}_{\mathcal{A}}^{n,n-k}(s,\btheta)$, we apply Algorithm~\ref{alg:C_n_nminusk_der_mem}, which follows from the Lemma~\ref{lem:Cs_theta_LST_nnminuk} below.

\begin{Lemma}
\label{lem:Cs_theta_LST_nnminuk}
We have,
\begin{eqnarray}
\label{eq:C_n_nminusk_theta}
\frac{\partial}{\partial\btheta}\widetilde{\bf C}_{\mathcal{A}}^{n,n-k}(s;\btheta)&=&
\frac{\partial \widetilde{\bf C}_{\mathcal{A}}^{n,n-k+1}(s;\btheta)}{ \partial \btheta}
\times\left(
{\bf I}_k\otimes \widetilde{\bf C}_{\mathcal{A}}^{n-k+1,n-k}(s;\btheta)
\right)
\nonumber\\
&&
+
\widetilde{\bf C}_{\mathcal{A}}^{n,n-k+1}(s;\btheta)
\times
\frac{\partial \widetilde{\bf C}_{\mathcal{A}}^{n-k+1,n-k}(s;\btheta)}{\partial \btheta},
\end{eqnarray}
where
\begin{eqnarray} 
\frac{\partial }{\partial \btheta}\widetilde{\bf C}_{\mathcal{A}}^{N,N-1}(s;\btheta)
  &=&
\left({\bf Q}^{[N,N]}(\btheta)-s{\bf C}_N\times I(N\in\mathcal{A})\right)^{-1}
\times
\frac{\partial {\bf Q}^{[N,N]}(\btheta)}{\partial \btheta}
\nonumber\\
&&
\times
\left(
{\bf I}_k
\otimes \left({\bf Q}^{[N,N]}(\btheta)-s{\bf C}_N\times I(N\in\mathcal{A})\right)^{-1}
\right)
\nonumber\\
&&
\times\left(
{\bf I}_k\otimes {\bf Q}^{[N,N-1]}(\btheta)
\right)
\nonumber
\\
&&
-
\left({\bf Q}^{[N,N]}(\btheta)-s{\bf C}_N\times I(N\in\mathcal{A})\right)^{-1}
\times
\frac{\partial {\bf Q}^{[N,N-1]}(\btheta)}{\partial \btheta},
        \label{C_N_N-1_theta}
\end{eqnarray}
and for $n=N-1,\ldots,n-k+1$,
\begin{equation}
\begin{aligned}
\frac{\partial }{\partial \btheta}&\widetilde{\bf C}_{\mathcal{A}}^{n,n-1}(s;\btheta)\\
&= \Big({\bf Q}^{[n,n]}(\btheta)-s{\bf C}_n I(n\in\mathcal{A})
+{\bf Q}^{[n,n+1]}(\btheta)\widetilde{\bf C}_{\mathcal{A}}^{n+1,n}(s,\btheta)
\Big)^{-1} \\
&\times
\Bigg(
\frac{\partial {\bf Q}^{[n,n]}(\btheta)}{\partial \btheta}
+
\frac{\partial{\bf Q}^{[n,n+1]}(\btheta)}{\partial \btheta}
\left({\bf I}_k \otimes \widetilde{\bf C}_{\mathcal{A}}^{n+1,n}(s,\btheta)\right)
\\
&\hspace{2.2cm}
+ {\bf Q}^{[n,n+1]}(\btheta)
\frac{\partial \widetilde{\bf C}_{\mathcal{A}}^{n+1,n}(s,\btheta)}{\partial \btheta}
\Bigg) \\
&\times
\Bigg(
{\bf I}_k \otimes
\Big({\bf Q}^{[n,n]}(\btheta)-s{\bf C}_n I(n\in\mathcal{A})
+{\bf Q}^{[n,n+1]}(\btheta)\widetilde{\bf C}_{\mathcal{A}}^{n+1,n}(s,\btheta)
\Big)^{-1}
\Bigg)
\\
&\times
\left({\bf I}_k\otimes {\bf Q}^{[n,n-1]}(\btheta)\right)
\\
&\quad-
\Big({\bf Q}^{[n,n]}(\btheta)-s{\bf C}_n I(n\in\mathcal{A})
+{\bf Q}^{[n,n+1]}(\btheta)\widetilde{\bf C}_{\mathcal{A}}^{n+1,n}(s,\btheta)
\Big)^{-1}
\frac{\partial {\bf Q}^{[n,n-1]}(\btheta)}{\partial \btheta}.
\end{aligned}
\label{eq:C_n_nminus1_theta}
\end{equation}
\end{Lemma}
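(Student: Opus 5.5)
The plan is to differentiate, with respect to the parameter vector $\btheta$, the identities established in Lemma~\ref{lem:CsLST}. Two standard matrix-calculus rules do all the work. The first is the product rule. The second is the derivative of an inverse, $\partial({\bf M}^{-1})/\partial\theta_\ell=-{\bf M}^{-1}(\partial{\bf M}/\partial\theta_\ell){\bf M}^{-1}$.

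Because $\btheta=(\theta_1,\ldots,\theta_k)$ is a vector, I will adopt the convention (as in Aksamit et al.~\cite{aksamit2024sensitivities}) that $\partial{\bf X}/\partial\btheta$ denotes the horizontal concatenation $[\partial{\bf X}/\partial\theta_1,\ldots,\partial{\bf X}/\partial\theta_k]$. Under this convention, right-multiplying every block by a matrix ${\bf Y}$ is written $(\partial{\bf X}/\partial\btheta)({\bf I}_k\otimes{\bf Y})$, which is why the Kronecker products ${\bf I}_k\otimes\cdot$ appear. Left-multiplication acts blockwise with no Kronecker factor. The matrices ${\bf C}_n$ and the scalar $s$ do not depend on $\btheta$, so $\partial({\bf Q}^{[n,n]}-s{\bf C}_nI(n\in\mathcal{A}))/\partial\btheta=\partial{\bf Q}^{[n,n]}/\partial\btheta$.

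\textbf{Step 1 (product formula).} By Lemma~\ref{lem:CsLST}, $\widetilde{\bf C}^{n,n-k}_{\mathcal A}=\widetilde{\bf C}^{n,n-k+1}_{\mathcal A}\widetilde{\bf C}^{n-k+1,n-k}_{\mathcal A}$. Applying the product rule for each $\theta_\ell$ and stacking the results horizontally gives \eqref{eq:C_n_nminusk_theta} directly. Used recursively in $k$, this reduces everything to the one-step derivatives.

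\textbf{Step 2 (top level).} Write ${\bf M}_N={\bf Q}^{[N,N]}-s{\bf C}_NI(N\in\mathcal{A})$, so that $\widetilde{\bf C}^{N,N-1}_{\mathcal A}=-{\bf M}_N^{-1}{\bf Q}^{[N,N-1]}$. For each $\theta_\ell$ the derivative is
$${\bf M}_N^{-1}\frac{\partial{\bf Q}^{[N,N]}}{\partial\theta_\ell}{\bf M}_N^{-1}{\bf Q}^{[N,N-1]}-{\bf M}_N^{-1}\frac{\partial{\bf Q}^{[N,N-1]}}{\partial\theta_\ell}.$$
Concatenating over $\ell$ yields \eqref{C_N_N-1_theta}.

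\textbf{Step 3 (recursion).} For $n<N$, set ${\bf M}_n={\bf Q}^{[n,n]}-s{\bf C}_nI(n\in\mathcal{A})+{\bf Q}^{[n,n+1]}\widetilde{\bf C}^{n+1,n}_{\mathcal A}$, so that $\widetilde{\bf C}^{n,n-1}_{\mathcal A}=-{\bf M}_n^{-1}{\bf Q}^{[n,n-1]}$. The same computation gives
$${\bf M}_n^{-1}\frac{\partial{\bf M}_n}{\partial\theta_\ell}{\bf M}_n^{-1}{\bf Q}^{[n,n-1]}-{\bf M}_n^{-1}\frac{\partial{\bf Q}^{[n,n-1]}}{\partial\theta_\ell}.$$
The product rule then gives
$$\frac{\partial{\bf M}_n}{\partial\theta_\ell}=\frac{\partial{\bf Q}^{[n,n]}}{\partial\theta_\ell}+\frac{\partial{\bf Q}^{[n,n+1]}}{\partial\theta_\ell}\widetilde{\bf C}^{n+1,n}_{\mathcal A}+{\bf Q}^{[n,n+1]}\frac{\partial\widetilde{\bf C}^{n+1,n}_{\mathcal A}}{\partial\theta_\ell}.$$
Here $\partial\widetilde{\bf C}^{n+1,n}_{\mathcal A}/\partial\btheta$ is available inductively, starting from Step 2. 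Stacking over $\ell$ yields \eqref{eq:C_n_nminus1_theta}.

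The only genuine points to check are, first, that ${\bf M}_n$ is invertible and depends differentiably on $\btheta$, and second, that the Kronecker bookkeeping is consistent. Invertibility I would take from Lemma~\ref{lem:CsLST}, where the inverses are already used. Differentiability then follows because entries of ${\bf Q}$ are assumed differentiable in $\btheta$ and inversion is smooth on invertible matrices, so it propagates through the recursion. The bookkeeping is the main obstacle, since it is easy to misplace an ${\bf I}_k\otimes$ factor. I expect it to be routine once the concatenation convention is fixed and each term is verified blockwise for a single $\theta_\ell$.
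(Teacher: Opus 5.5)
Your proposal is correct and follows essentially the same route as the paper: you differentiate the recursions of Lemma~\ref{lem:CsLST} using the product rule and the inverse-derivative rule, under the horizontal-concatenation convention that produces the ${\bf I}_k\otimes\cdot$ factors, treating the top level, the one-step recursion, and the product formula in turn. The only difference is ordering: the paper handles the top level first, then the one-step recursion, and only then the product formula (for $k\geq 2$), and it does not discuss invertibility or differentiability, which you address briefly.
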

\noindent{\bf Proof:} First, we calculate the sensitivity of 
$\widetilde{\bf C}_{\mathcal{A}}^{N,N-1}(s;\btheta)$ using the results
\begin{align*}
\frac{\partial}{\partial\btheta}
\left(
{\bf A}(\btheta)\times {\bf B}( \btheta)
\right)
&=
\frac{\partial {\bf A}(\btheta)}{ \partial \btheta}
\times\left(
{\bf I}_k\otimes {\bf B}(\btheta)
\right)
+
{\bf A}(\btheta)
\times
\frac{\partial {\bf B}(\btheta)}{\partial \btheta}\\
\frac{\partial ({\bf A}(\btheta))^{-1}}{\partial \btheta}
&=
-({\bf A}(\btheta))^{-1}
\times
\frac{\partial {\bf A}(\btheta)}{\partial \btheta}
\times
\left(
{\bf I}_k
\otimes ({\bf A}(\btheta))^{-1}
\right),
\end{align*}
as follows.

\begin{equation*}
\begin{aligned}
\frac{\partial}{\partial \btheta}&
\widetilde{\bf C}_{\mathcal{A}}^{N,N-1}(s;\btheta)
\\
&=
-\frac{\partial}{\partial \btheta}
\Big(
({\bf Q}^{[N,N]}(\btheta)-s{\bf C}_N I(N\in\mathcal{A}))^{-1}
{\bf Q}^{[N,N-1]}(\btheta)
\Big)
\\
&=
-\frac{\partial ({\bf Q}^{[N,N]}(\btheta)-s{\bf C}_N I(N\in\mathcal{A}))^{-1}}{\partial \btheta}
\left({\bf I}_k\otimes {\bf Q}^{[N,N-1]}(\btheta)\right)
\\
&\quad -
({\bf Q}^{[N,N]}(\btheta)-s{\bf C}_N I(N\in\mathcal{A}))^{-1}
\frac{\partial {\bf Q}^{[N,N-1]}(\btheta)}{\partial \btheta}
\\
&=
({\bf Q}^{[N,N]}(\btheta)-s{\bf C}_N I(N\in\mathcal{A}))^{-1}
\frac{\partial ({\bf Q}^{[N,N]}(\btheta)-s{\bf C}_N I(N\in\mathcal{A}))}{\partial \btheta}
\\
&\quad \times
\left({\bf I}_k \otimes
({\bf Q}^{[N,N]}(\btheta)-s{\bf C}_N I(N\in\mathcal{A}))^{-1}\right)
\left({\bf I}_k\otimes {\bf Q}^{[N,N-1]}(\btheta)\right)
\\
&\quad -
({\bf Q}^{[N,N]}(\btheta)-s{\bf C}_N I(N\in\mathcal{A}))^{-1}
\frac{\partial {\bf Q}^{[N,N-1]}(\btheta)}{\partial \btheta}
\\
&=
({\bf Q}^{[N,N]}(\btheta)-s{\bf C}_N I(N\in\mathcal{A}))^{-1}
\frac{\partial {\bf Q}^{[N,N]}(\btheta)}{\partial \btheta}
\\
&\quad \times
\left({\bf I}_k \otimes
({\bf Q}^{[N,N]}(\btheta)-s{\bf C}_N I(N\in\mathcal{A}))^{-1}\right)
\left({\bf I}_k\otimes {\bf Q}^{[N,N-1]}(\btheta)\right)
\\
&\quad -
({\bf Q}^{[N,N]}(\btheta)-s{\bf C}_N I(N\in\mathcal{A}))^{-1}
\frac{\partial {\bf Q}^{[N,N-1]}(\btheta)}{\partial \btheta}.
\end{aligned}
\end{equation*}

Now, for $n=N-1,\ldots,1$, we have
\begin{equation*}
\begin{aligned}
\frac{\partial }{\partial \btheta}&
\widetilde{\bf C}_{\mathcal{A}}^{n,n-1}(s;\btheta)
\\
&=
-\frac{\partial }{\partial \btheta}
\Big({\bf Q}^{[n,n]}(\btheta)-s{\bf C}_n I(n\in\mathcal{A})
+{\bf Q}^{[n,n+1]}(\btheta)\widetilde{\bf C}_{\mathcal{A}}^{n+1,n}(s,\btheta)
\Big)^{-1}
{\bf Q}^{[n,n-1]}(\btheta)
\\
&=
-\frac{\partial \Big({\bf Q}^{[n,n]}(\btheta)-s{\bf C}_n I(n\in\mathcal{A})
+{\bf Q}^{[n,n+1]}(\btheta)\widetilde{\bf C}_{\mathcal{A}}^{n+1,n}(s,\btheta)
\Big)^{-1}}{\partial \btheta}
\\
&\quad \times
\left({\bf I}_k\otimes {\bf Q}^{[n,n-1]}(\btheta)\right)
\\
&\quad -
\left({\bf Q}^{[n,n]}(\btheta)-s{\bf C}_n I(n\in\mathcal{A})
+{\bf Q}^{[n,n+1]}(\btheta)\widetilde{\bf C}_{\mathcal{A}}^{n+1,n}(s,\btheta)
\right)^{-1}
\frac{\partial {\bf Q}^{[n,n-1]}(\btheta)}{\partial \btheta},
\end{aligned}
\end{equation*}
where
\begin{equation*}
\begin{aligned}
\frac{\partial }{\partial \btheta}&
\left({\bf Q}^{[n,n]}(\btheta)-s{\bf C}_n I(n\in\mathcal{A})
+{\bf Q}^{[n,n+1]}(\btheta)\widetilde{\bf C}_{\mathcal{A}}^{n+1,n}(s,\btheta)
\right)^{-1}
\\
&=
- \left({\bf Q}^{[n,n]}(\btheta)-s{\bf C}_n I(n\in\mathcal{A})
+{\bf Q}^{[n,n+1]}(\btheta)\widetilde{\bf C}_{\mathcal{A}}^{n+1,n}(s,\btheta)
\right)^{-1}
\\
&\quad \times
\frac{\partial \left({\bf Q}^{[n,n]}(\btheta)-s{\bf C}_n I(n\in\mathcal{A})
+{\bf Q}^{[n,n+1]}(\btheta)\widetilde{\bf C}_{\mathcal{A}}^{n+1,n}(s,\btheta)
\right)}{\partial \btheta}
\\
&\quad \times
\left({\bf I}_k \otimes
\left({\bf Q}^{[n,n]}(\btheta)-s{\bf C}_n I(n\in\mathcal{A})
+{\bf Q}^{[n,n+1]}(\btheta)\widetilde{\bf C}_{\mathcal{A}}^{n+1,n}(s,\btheta)
\right)^{-1}\right)
\\
&=
- \left({\bf Q}^{[n,n]}(\btheta)-s{\bf C}_n I(n\in\mathcal{A})
+{\bf Q}^{[n,n+1]}(\btheta)\widetilde{\bf C}_{\mathcal{A}}^{n+1,n}(s,\btheta)
\right)^{-1}
\\
&\quad \times
\Bigg(
\frac{\partial {\bf Q}^{[n,n]}(\btheta)}{\partial \btheta}
+
\frac{\partial\left({\bf Q}^{[n,n+1]}(\btheta)\widetilde{\bf C}_{\mathcal{A}}^{n+1,n}(s,\btheta)\right)}{\partial \btheta}
\Bigg)
\\
&\quad \times
\left({\bf I}_k \otimes
\left({\bf Q}^{[n,n]}(\btheta)-s{\bf C}_n I(n\in\mathcal{A})
+{\bf Q}^{[n,n+1]}(\btheta)\widetilde{\bf C}_{\mathcal{A}}^{n+1,n}(s,\btheta)
\right)^{-1}\right)
\\
&=
- \left({\bf Q}^{[n,n]}(\btheta)-s{\bf C}_n I(n\in\mathcal{A})
+{\bf Q}^{[n,n+1]}(\btheta)\widetilde{\bf C}_{\mathcal{A}}^{n+1,n}(s,\btheta)
\right)^{-1}
\\
&\quad \times
\Bigg(
\frac{\partial {\bf Q}^{[n,n]}(\btheta)}{\partial \btheta}
+
\frac{\partial{\bf Q}^{[n,n+1]}(\btheta)}{\partial \btheta}
\left({\bf I}_k \otimes \widetilde{\bf C}_{\mathcal{A}}^{n+1,n}(s,\btheta)\right)
\\
&\qquad\qquad
+
{\bf Q}^{[n,n+1]}(\btheta)
\frac{\partial \widetilde{\bf C}_{\mathcal{A}}^{n+1,n}(s,\btheta)}{\partial \btheta}
\Bigg)
\\
&\quad \times
\left({\bf I}_k \otimes
\left({\bf Q}^{[n,n]}(\btheta)-s{\bf C}_n I(n\in\mathcal{A})
+{\bf Q}^{[n,n+1]}(\btheta)\widetilde{\bf C}_{\mathcal{A}}^{n+1,n}(s,\btheta)
\right)^{-1}\right).
\end{aligned}
\end{equation*}

Finally, for $k\geq 2$, we have
\begin{equation*}
\begin{aligned}
\frac{\partial}{\partial\btheta}
\widetilde{\bf C}_{\mathcal{A}}^{n,n-k}(s;\btheta)
\\
&=
\frac{\partial}{\partial\btheta}
\Big(
\widetilde{\bf C}_{\mathcal{A}}^{n,n-1}(s;\btheta)
\widetilde{\bf C}_{\mathcal{A}}^{n-1,n-2}(s;\btheta)
\cdots
\widetilde{\bf C}_{\mathcal{A}}^{n-k+1,n-k}(s;\btheta)
\Big)
\\
&=
\frac{\partial}{\partial\btheta}
\Big(
\widetilde{\bf C}_{\mathcal{A}}^{n,n-k+1}(s;\btheta)
\widetilde{\bf C}_{\mathcal{A}}^{n-k+1,n-k}(s;\btheta)
\Big)
\\
&=
\frac{\partial \widetilde{\bf C}_{\mathcal{A}}^{n,n-k+1}(s;\btheta)}{\partial \btheta}
\left({\bf I}_k\otimes \widetilde{\bf C}_{\mathcal{A}}^{n-k+1,n-k}(s;\btheta)\right)
\\
&\quad +
\widetilde{\bf C}_{\mathcal{A}}^{n,n-k+1}(s;\btheta)
\frac{\partial \widetilde{\bf C}_{\mathcal{A}}^{n-k+1,n-k}(s;\btheta)}{\partial \btheta}.
\end{aligned}
\end{equation*}

\begin{algorithm}[H]
\caption{Evaluate $\frac{\partial }{\partial \btheta}\widetilde{\bf C}_{\mathcal{A}}^{n,n-k}(s,\btheta)$, $n=1,2,\ldots,N$}
\label{alg:C_n_nminusk_der_mem}
\begin{algorithmic}[1]

\Input ${\bf Q}, n, k, \btheta$
\Output $\frac{\partial}{\partial \btheta}\widetilde{\bf C}_{\mathcal{A}}^{n,n-k}(s,\btheta)$

\State Compute 
\[\widetilde{\bf C}_{\mathcal{A}}^{N,N-1}(s,\btheta)=
-({\bf Q}^{[N,N]}(\btheta)-s{\bf C}_N I(N\in\mathcal{A}))^{-1}
{\bf Q}^{[N,N-1]}(\btheta)
\]

\State Set ${\bf B}=\widetilde{\bf C}_{\mathcal{A}}^{N,N-1}(s,\btheta)$

\State Compute $\frac{\partial }{\partial \btheta}\widetilde{\bf C}_{\mathcal{A}}^{N,N-1}(s;\btheta)$ using \eqref{C_N_N-1_theta} and store as ${\bf D}$.

\For{$i = N-1, \ldots, n-k+1$}

\State Compute $\frac{\partial }{\partial \btheta}\widetilde{\bf C}_{\mathcal{A}}^{i,i-1}(s;\btheta)$ using \eqref{eq:alg_Cnnminusk_mem} in Appendix~\ref{eq:equations}.

\State Update ${\bf D}=\frac{\partial }{\partial \btheta}\widetilde{\bf C}_{\mathcal{A}}^{i,i-1}(s;\btheta)$

\State Compute
\[
\widetilde{\bf C}_{\mathcal{A}}^{i,i-1}(s,\btheta)=
-({\bf Q}^{[i,i]}(\btheta)-s{\bf C}_i I(i\in\mathcal{A})
+{\bf Q}^{[i,i+1]}(\btheta){\bf B})^{-1}
{\bf Q}^{[i,i-1]}(\btheta)
\]

\State Update ${\bf B}=\widetilde{\bf C}_{\mathcal{A}}^{i,i-1}(s,\btheta)$

\EndFor

\State Compute $\frac{\partial }{\partial \btheta}\widetilde{\bf C}_{\mathcal{A}}^{n,n-k}(s;\btheta)$ using \eqref{eq:C_n_nminusk_theta}

\end{algorithmic}
\end{algorithm}

\noindent Similarly, we apply Algorithm~\ref{alg:C_n_nplusk_der_mem} to evaluate the sensitivity of $\widetilde{\bf C}_{\mathcal{A}}^{n,n+k}(s,\btheta)$, which follows from Lemma~\ref{lem:Cs_theta_LST_nnplusk} below.

\begin{Lemma}
\label{lem:Cs_theta_LST_nnplusk}
We have,
\begin{eqnarray}
\label{eq:C_n_nplusk_theta}
\frac{\partial}{\partial\btheta}\widetilde{\bf C}_{\mathcal{A}}^{n,n+k}(s;\btheta)&=&
\frac{\partial \widetilde{\bf C}_{\mathcal{A}}^{n,n+k-1}(s;\btheta)}{ \partial \btheta}
\times\left(
{\bf I}_k\otimes \widetilde{\bf C}_{\mathcal{A}}^{n+k-1,n+k}(s;\btheta)
\right)
\nonumber\\
&&
+
\widetilde{\bf C}_{\mathcal{A}}^{n,n+k-1}(s;\btheta)
\times
\frac{\partial \widetilde{\bf C}_{\mathcal{A}}^{n+k-1,n+k}(s;\btheta)}{\partial \btheta},
\end{eqnarray}
where
\begin{eqnarray} 
\frac{\partial }{\partial \btheta}\widetilde{\bf C}_{\mathcal{A}}^{0,1}(s;\btheta)
  &=&
\left({\bf Q}^{[0,0]}(\btheta)-s{\bf C}_0\times I(0\in\mathcal{A})\right)^{-1}
\times
\frac{\partial {\bf Q}^{[0,0]}(\btheta)}{\partial \btheta}
\nonumber\\
&&
\times
\left(
{\bf I}_k
\otimes \left({\bf Q}^{[0,0]}(\btheta)-s{\bf C}_0\times I(0\in\mathcal{A})\right)^{-1}
\right)
\times\left(
{\bf I}_k\otimes {\bf Q}^{[0,1]}(\btheta)
\right)
\nonumber
\\
&&
-
\left({\bf Q}^{[0,0]}(\btheta)-s{\bf C}_0\times I(0\in\mathcal{A})\right)^{-1}
\times
\frac{\partial {\bf Q}^{[0,1]}(\btheta)}{\partial \btheta},
        \label{C_0_1_theta}
\end{eqnarray}
and for $n=1,\ldots,n+k-1$,
\begin{equation*}
\begin{aligned}
\frac{\partial }{\partial \btheta}&
\widetilde{\bf C}_{\mathcal{A}}^{n,n+1}(s;\btheta)
\\
&=
\left({\bf Q}^{[n,n]}(\btheta)-s{\bf C}_n I(n\in\mathcal{A})
+{\bf Q}^{[n,n-1]}(\btheta)\widetilde{\bf C}_{\mathcal{A}}^{n-1,n}(s,\btheta)
\right)^{-1}
\\
&\quad \times
\Bigg(
\frac{\partial {\bf Q}^{[n,n]}(\btheta)}{\partial \btheta}
+
\frac{\partial{\bf Q}^{[n,n-1]}(\btheta)}{\partial \btheta}
\left({\bf I}_k \otimes \widetilde{\bf C}_{\mathcal{A}}^{n-1,n}(s,\btheta)\right)
\\
&\qquad\qquad
+
{\bf Q}^{[n,n-1]}(\btheta)
\frac{\partial \widetilde{\bf C}_{\mathcal{A}}^{n-1,n}(s,\btheta)}{\partial \btheta}
\Bigg)
\\
&\quad \times
\left(
{\bf I}_k \otimes
\left({\bf Q}^{[n,n]}(\btheta)-s{\bf C}_n I(n\in\mathcal{A})
+{\bf Q}^{[n,n-1]}(\btheta)\widetilde{\bf C}_{\mathcal{A}}^{n-1,n}(s,\btheta)
\right)^{-1}
\right)
\\
&\quad \times
\left({\bf I}_k\otimes {\bf Q}^{[n,n+1]}(\btheta)\right)
\\
&\quad -
\left({\bf Q}^{[n,n]}(\btheta)-s{\bf C}_n I(n\in\mathcal{A})
+{\bf Q}^{[n,n-1]}(\btheta)\widetilde{\bf C}_{\mathcal{A}}^{n-1,n}(s,\btheta)
\right)^{-1}
\frac{\partial {\bf Q}^{[n,n+1]}(\btheta)}{\partial \btheta}.
\end{aligned}
\end{equation*}
\end{Lemma}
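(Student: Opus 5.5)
The proof will mirror that of Lemma~\ref{lem:Cs_theta_LST_nnminuk}, now applied to the upward recursion of Lemma~\ref{lem:CsLST2}. We use the same two matrix-calculus identities stated there. The first is the product rule
\[
\frac{\partial ({\bf A}(\btheta){\bf B}(\btheta))}{\partial\btheta}=\frac{\partial {\bf A}(\btheta)}{\partial\btheta}({\bf I}_k\otimes{\bf B}(\btheta))+{\bf A}(\btheta)\frac{\partial {\bf B}(\btheta)}{\partial\btheta}.
\]
The second is the inverse rule
\[
\frac{\partial ({\bf A}(\btheta))^{-1}}{\partial\btheta}=-({\bf A}(\btheta))^{-1}\frac{\partial {\bf A}(\btheta)}{\partial\btheta}\left({\bf I}_k\otimes({\bf A}(\btheta))^{-1}\right).
\]
Here $\partial/\partial\btheta$ stacks the $k$ partial derivatives horizontally. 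Throughout, ${\bf C}_n$, $s$ and $\mathcal{A}$ do not depend on $\btheta$. Hence $\partial({\bf Q}^{[n,n]}(\btheta)-s{\bf C}_nI(n\in\mathcal{A}))/\partial\btheta=\partial{\bf Q}^{[n,n]}(\btheta)/\partial\btheta$.

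\emph{Base case.} Write $\widetilde{\bf C}_{\mathcal{A}}^{0,1}(s;\btheta)=-{\bf A}_0^{-1}{\bf Q}^{[0,1]}(\btheta)$, where ${\bf A}_0={\bf Q}^{[0,0]}(\btheta)-s{\bf C}_0I(0\in\mathcal{A})$. Apply the product rule, then the inverse rule to ${\bf A}_0^{-1}$. The two minus signs cancel in the first term, which gives exactly \eqref{C_0_1_theta}.

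\emph{Recursive step.} For $n\ge1$, set ${\bf A}_n={\bf Q}^{[n,n]}(\btheta)-s{\bf C}_nI(n\in\mathcal{A})+{\bf Q}^{[n,n-1]}(\btheta)\widetilde{\bf C}_{\mathcal{A}}^{n-1,n}(s;\btheta)$, so that $\widetilde{\bf C}_{\mathcal{A}}^{n,n+1}=-{\bf A}_n^{-1}{\bf Q}^{[n,n+1]}$. Again apply the product rule followed by the inverse rule. It remains to compute $\partial{\bf A}_n/\partial\btheta$. This is $\partial{\bf Q}^{[n,n]}/\partial\btheta$ plus the derivative of the product ${\bf Q}^{[n,n-1]}\widetilde{\bf C}_{\mathcal{A}}^{n-1,n}$. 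The product rule gives that derivative as $\frac{\partial{\bf Q}^{[n,n-1]}}{\partial\btheta}({\bf I}_k\otimes\widetilde{\bf C}_{\mathcal{A}}^{n-1,n})+{\bf Q}^{[n,n-1]}\frac{\partial\widetilde{\bf C}_{\mathcal{A}}^{n-1,n}}{\partial\btheta}$.

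\emph{Chaining.} Because the block Kronecker factors are applied consistently, the three factors assemble in order. The result is the stated expression for $\partial\widetilde{\bf C}_{\mathcal{A}}^{n,n+1}/\partial\btheta$. It depends on $\partial\widetilde{\bf C}_{\mathcal{A}}^{n-1,n}/\partial\btheta$, which is already available by induction on $n$ starting from the base case.

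\emph{Multi-level formula.} By Lemma~\ref{lem:CsLST2}, $\widetilde{\bf C}_{\mathcal{A}}^{n,n+k}=\widetilde{\bf C}_{\mathcal{A}}^{n,n+k-1}\widetilde{\bf C}_{\mathcal{A}}^{n+k-1,n+k}$. One application of the product rule gives \eqref{eq:C_n_nplusk_theta}. The derivative $\partial\widetilde{\bf C}_{\mathcal{A}}^{n,n+k-1}/\partial\btheta$ is obtained recursively in $k$, with base case $k=1$ being the one-step formula above.

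The only delicate point is bookkeeping rather than analysis. In the horizontally stacked convention, right-multiplication by a $\btheta$-independent-shaped matrix must be expanded to ${\bf I}_k\otimes(\cdot)$, while left-multiplication is not. In the recursive step this has to be tracked carefully through the nested product inside ${\bf A}_n$, so that the Kronecker factors appear in exactly the positions stated. Invertibility of ${\bf A}_n$ is inherited from Lemma~\ref{lem:CsLST2}, where these inverses already appear.
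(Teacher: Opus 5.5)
Your proposal is correct and is essentially the paper's proof. The paper simply defers to the proof of Lemma~\ref{lem:Cs_theta_LST_nnminuk}, which applies the same stacked product and inverse rules, with ${\bf I}_k\otimes(\cdot)$ on right factors, to the base case, the one-step recursion, and the factorisation $\widetilde{\bf C}_{\mathcal{A}}^{n,n+k}=\widetilde{\bf C}_{\mathcal{A}}^{n,n+k-1}\widetilde{\bf C}_{\mathcal{A}}^{n+k-1,n+k}$; you carry this out explicitly for the upward recursion.
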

\noindent{\bf Proof:} The proof follows by arguments analogous to Lemma~\ref{lem:Cs_theta_LST_nnminuk}.

\begin{algorithm}[H]
\caption{Evaluate $\frac{\partial }{\partial \btheta}\widetilde{\bf C}_{\mathcal{A}}^{n,n+k}(s,\btheta)$, $n=0,1,\ldots,N-1$}
\label{alg:C_n_nplusk_der_mem}
\begin{algorithmic}[1]

\Input ${\bf Q}, n, k, \btheta$
\Output $\frac{\partial}{\partial \btheta}\widetilde{\bf C}_{\mathcal{A}}^{n,n+k}(s,\btheta)$

\State Compute
\[
\widetilde{\bf C}_{\mathcal{A}}^{0,1}(s,\btheta)=
-({\bf Q}^{[0,0]}(\btheta)-s{\bf C}_0 I(0\in\mathcal{A}))^{-1}
{\bf Q}^{[0,1]}(\btheta).
\]

\State Set ${\bf B}=\widetilde{\bf C}_{\mathcal{A}}^{0,1}(s,\btheta)$

\State Compute $\frac{\partial }{\partial \btheta}\widetilde{\bf C}_{\mathcal{A}}^{0,1}(s;\btheta)$ using \eqref{C_0_1_theta} and store as ${\bf D}$

\For{$i = 1, \ldots, n+k-1$}

\State Compute $\frac{\partial }{\partial \btheta}\widetilde{\bf C}_{\mathcal{A}}^{i,i+1}(s;\btheta)$ using \eqref{eq:alg_Cnplusk_mem} in Appendix~\ref{eq:equations}.

\State Update ${\bf D}=\frac{\partial }{\partial \btheta}\widetilde{\bf C}_{\mathcal{A}}^{i,i+1}(s;\btheta)$

\State Compute
\[
\widetilde{\bf C}_{\mathcal{A}}^{i,i+1}(s,\btheta)=
-({\bf Q}^{[i,i]}(\btheta)-s{\bf C}_i I(i\in\mathcal{A})
+{\bf Q}^{[i,i-1]}(\btheta){\bf B})^{-1}
{\bf Q}^{[i,i+1]}(\btheta)
\]

\State Update ${\bf B}=\widetilde{\bf C}_{\mathcal{A}}^{i,i+1}(s,\btheta)$

\EndFor

\State Compute $\frac{\partial }{\partial \btheta}\widetilde{\bf C}_{\mathcal{A}}^{n,n+k}(s;\btheta)$ using \eqref{eq:C_n_nplusk_theta}

\end{algorithmic}
\end{algorithm}

\section{Application Examples: Redirected and Transferred Patients }\label{sec:application}

In this section, we consider the application of the QBD models in Section~\ref{sec:QBD-models} to a hospital system with finite capacity $N$ that serves two types of patients: Type-A (complex) and Type-B (others), classified based on their Diagnosis Related Group (DRG) category described in Rahmawati et al.~\cite{rahmawati2021cost}. Specifically, patients whose DRG category indicates \emph{major complexity} were classified as Type-A, while all remaining patients were classified as Type-B. Analyses of hospital data suggest that admission and discharge processes can be well approximated by Poisson arrivals with exponentially distributed service times, see Whitt and Zhang~\cite{whitt2017data}. This assumption is often used in queueing‑based models of hospital systems and is explicitly adopted by researchers, see Wang et al.~\cite{wang2025markov}.

\subsection{Input parameters}
\label{input_parameters}

Parameters $\lambda_A$,$\lambda_B$, $\mu_A$, and $\mu_B$, summarised in Table~\ref{tab:parametersQBDs} were estimated from a five-year hospital data obtained from an Australian tertiary referral hospital.  Using the recorded length-of-stay data for each patient type, we estimated the service rates $\mu_A$ and $\mu_B$ by fitting exponential distributions using the \texttt{fitdist} command in \textsc{Matlab}. We obtained the arrival rates $\lambda_A$ and $\lambda_B$ by fitting Poisson distribution using the \texttt{poissfit} command in \textsc{Matlab} to the recorded daily number of arrivals of each type of patients. Further, in line with the reported complexity-classification performance, which remains robust under misclassification rates up to $25\%$ (see Saghafian et al.~\cite{saghafian2014complexity}), we take $p_{AA}=0.85$ and $p_{BA}=0.15$ as representative values for the correct and incorrect identification of complex (Type-A) patients upon arrival.

The remaining parameters $p$, $M_B$, and $N$, correspond to the admission policy choices. Although hospitals typically designate a fixed number of beds $(N-M_B)$ to be reserved for complex (Type-A) patients, informal discussions with hospital managers suggest that this policy is not enforced rigidly in practice. Instead, when the system is highly occupied, managers may still admit other (Type‑B) patients into the reserved beds $(N-M_B)$ with a small probability $p$, reflecting operational flexibility rather than strict reservation. This behaviour captures a moderate level of selectivity rather than absolute prioritisation. Motivated by these observations, we set $N-M_B=10$ and choose $p=0.25$.

\begin{table}[htbp]
\centering
\begin{tabular}{l l l}
\hline
Parameter & Type-A & Type-B \\
\hline

Arrival rate (per type, per day) 
& $\lambda_{A}=16.1298$ 
& $\lambda_{B}=46.7864$ \\

Mean LoS (in days) 
& $\mathbb{E}(T_A)=6.73$ 
& $\mathbb{E}(T_B)=2.50$ \\

$95\%$ CI of Mean LoS 
& $[6.65, 6.81]$ 
& $[2.48, 2.52]$ \\

Departure rate (per patient, per day) 
& $\mu_{A}=0.1486$ 
& $\mu_{B}=0.4002$ \\

\hline
\end{tabular}

\caption{Model parameters for QBD-I, QBD-II, and QBD-III.}
\label{tab:parametersQBDs}

\end{table}

\subsection{Effect of N on long-run performance measures}

For the completeness of the analysis, first we study the effect of the size of the system $N$ on the following long-run key performance measures:
\begin{itemize}
\item  $\pi_{N,\bullet} = \sum_i \pi_{N,i}$ -- proportion of time the system is busy;

\item  $\pi^{B}_{\text{restrict}} = 100 \times \sum_{n=M_B}^{N} \pi_{n,\bullet}$ -- percentage of time the occupancy is at least $M_B$.

\item $L=\sum_n n\pi_{n,\bullet} $ -- mean number of customers in the system.

\item $L_A= \sum_i i\pi_{\bullet,i}$ -- mean number of Type-A customers in the system.

\item $L_B = L - L_A $ -- mean number of Type-B customers in the system.

\item $N_A = 100(L_A/L)$ -- proportion of Type-A customers in the system.

\item $N_B = 100(L_B/L)$ -- proportion of Type-B customers in the system.

\item $O_N=100(L/N)$ -- mean occupancy of the system.
\end{itemize}

Further, under QBD-III, the rates at which Type-A and Type-B customers are redirected, are
\begin{align}
    \lambda_{A:\text{redirect}} &=  \sum_{n=M_B}^{N-1} \sum_{i=0}^{n} \pi_{(n,i)} (1-p)\lambda_A (1-p_{AA}) + \pi_{N,\bullet}
   \lambda_A(1-p_{AA}) + \pi_{N,N}p_{AA}\lambda_A,
   \label{QBDIIIredirectA}
   \\
    \lambda_{B:\text{redirect}} &= \sum_{n=M_B}^{N-1} \sum_{i=0}^{n} \pi_{(n,i)} (1-p)\lambda_B (1-p_{BA})) + \pi_{N,\bullet} \lambda_B(1-p_{BA}) + \pi_{N,N} p_{BA}\lambda_B.
    \label{QBDIIIredirectB}
\end{align}
For QBD-II, the corresponding redirection rates can be obtained by putting $p=1$ in the above. For QBD-I, the redirection rates can be obtained by putting $p=1$ as well as $p_{AA}=0$.

Finally, under QBD-II and QBD-III,  $$\lambda_{\text{transfer}}=\big(\lambda_A p_{AA} + \lambda_B p_{BA}\big) (\pi_{N,\bullet} - \pi_{N,N})$$ is the mean transfer rate.

As expected, we observe that $L$, $L_A$, and $L_B$ increase with $N$ (Figures~\ref{metrics_as_N_II}-\ref{QBDIII_LA_N_with_N_MB_10}). For all values of $N$, $L_A$ is largest under QBD-III and smallest under QBD-I, whereas $L_B$ exhibits the opposite pattern. This is because QBD-II prioritises Type-A patients, as compared to QBD-I, through the transfer policy and QBD-III further strengthens this priority by controlling the Type-B admissions through guard-channel threshold policy. We note that $L_A$ is the most stable under QBD-III, indicating that reserving a small number of beds for complex patients may stabilise the number of Type-A patients in the system which may help in capacity planning and resource utilisation.

In Figures~\ref{metrics_as_N_I}a and~\ref{metrics_as_N_I}b, we observe that $\pi^{*}_{N,\bullet}$ and $\pi^{B}_{\text{restrict}}$ decrease as $N$ increases, reflecting the increased availability of capacity as the number of beds increases, which is expected. For a given $N$, $\pi^{*}_{N,\bullet}$ is higher under QBD-II than QBD-I due to the transfer policy, which favours Type-A admissions who have longer lengths of stay, thereby increasing congestion. In contrast, $\pi^{*}_{N,\bullet}$ remains small under QBD-III because of the guard-channel threshold $M_B$ and probabilistic admission control $p$. Therefore, reserving a small number of beds for complex patients can significantly reduce congestion, though it leads to increased redirection of Type-B patients, as discussed below.

Figures~\ref{metrics_as_N_I}c,~\ref{metrics_as_N_I}d,~\ref{metrics_as_N_I}e, and~\ref{metrics_as_N_I}f illustrate the monotonic decrease in the transfer rates $(\lambda_{\text{transfer}})$ and redirection rates $(\lambda_{\text{redirect}}, \lambda_{\text{A:redirect}}, \lambda_{\text{B:redirect}})$ as the system capacity $N$ increases. This decreasing behaviour occurs because of the decrease in the percentage of time the system is congested ($\pi^{*}_{N,\bullet}$), as shown in Figure~\ref{metrics_as_N_I}a. Under QBD-III, the transfer rate $\lambda_{\text{transfer}}$ remains small because the system is rarely fully occupied ($\pi^{*}_{N,\bullet}$ is small) and so Type-A arrivals seldom encounter a full system, resulting in a very few Type-B transfers.

Figure~\ref{metrics_as_N_I}e shows that $\lambda_{\text{A:redirect}}$ is the highest under QBD-I for any value of $N$. This follows from the fact that Type-A patients are prioritised under QBD-II and QBD-III, reducing the likelihood that they are redirected, compared to QBD-I. An interesting observation is that, although the system is rarely full under QBD-III, the Type-A redirection rate $\lambda_{\text{A:redirect}}$ is higher under QBD-III than under QBD-II. This behaviour is primarily driven by misclassification of Type-A arrivals once the guard-channel threshold $M_B$ has been reached, which is captured using the term $\sum_{n=M_B}^{N-1} \sum_{i=0}^{n} \pi_{(n,i)} (1-p)\lambda_A (1-p_{AA})$ in~\eqref{QBDIIIredirectA}.

Similarly, Figure~\ref{metrics_as_N_I}f shows that $\lambda_{\text{B:redirect}}$ is higher under QBD-III than under QBD-II. This increase is a direct consequence of the guard-channel threshold policy $(M_B,p)$, under which Type-B patients are increasingly blocked once the threshold $M_B$ is reached. The contribution of this mechanism to the Type-B redirection rate is quantified by the term $\sum_{n=M_B}^{N-1} \sum_{i=0}^{n} \pi_{(n,i)} (1-p)\lambda_B (1-p_{BA})$ in~\eqref{QBDIIIredirectB}. Further, $O_N$ decreases with $N$ and remains consistently lowest under QBD-III for all values of $N$, which is expected because $\pi^{*}_{N,\bullet}$ decreases with $N$ and remains lowest under QBD-III. Finally, in Table~\ref{tab:metrics_N220_QBDs} we present the numerical values of the performance measures for $N=220$, under the models QBD-I, QBD-II, and QBD-III. 

These observations suggest that reserving a small number of beds for Type-A patients may be a suitable policy choice, provided the system can bear a modest increase in the redirection of Type-B patients (which will affect some external system, and could be used to evaluate its suitable capacity).

\begin{table}[htbp]
\centering
\renewcommand{\arraystretch}{1.3}

\resizebox{\textwidth}{!}{
\begin{tabular}{lcccccccccc}

\hline
Models 
& $\pi^{*}_{N,\bullet}$ 
& $\pi^{B}_{\text{restrict}}$ 
& $\lambda_{\text{transfer}}$ 
& $\lambda_{\text{redirect}}$ 
& $\lambda^{A}_{\text{redirect}}$ 
& $\lambda^{B}_{\text{redirect}}$ 
& $L$ 
& $L_A$ 
& $L_B$ 
& $O_N$ \\
\hline

QBD-I 
& $6.727$ 
& $61.70$ 
& $-$ 
& $4.2325$ 
& $1.0851$ 
& $3.1474$ 
& $210.32$ 
& $101.27$ 
& $109.05$ 
& $95.60\%$ \\

QBD-II 
& $8.368$ 
& $67.78$ 
& $1.7346$ 
& $3.5303$ 
& $0.2025$ 
& $3.3279$ 
& $211.47$ 
& $107.21$ 
& $104.26$ 
& $96.12\%$ \\

QBD-III 
& $0.011$ 
& $23.25$ 
& $0.0022$ 
& $7.3574$ 
& $0.4219$ 
& $6.9355$ 
& $205.09$ 
& $108.57$ 
& $96.52$ 
& $93.22\%$ \\

\hline
\end{tabular}
}

\caption{Some performance measures for $N=220$, under the models QBD-I, QBD-II, and QBD-III. We note that $\lambda_{\text{redirect}}=\lambda_{A:\text{redirect}}+\lambda_{B:\text{redirect}}$, $\pi^{*}_{N,\bullet}=100 \times \pi_{N,\bullet}$, and $\pi^{B}_{\text{restrict}}=100 \times \sum_{n=M_B}^{N} \pi_{n,\bullet}$. We assumed $p_{AA}=0.85$, $p_{BA}=0.15$, $N=220$, $M_B=210$, $p=0.25$, and $\lambda_A$, $\lambda_B$, $\mu_A$, $\mu_B$ as given in Table~\ref{tab:parametersQBDs}.}
\label{tab:metrics_N220_QBDs}
\end{table}

\begin{figure}[htbp]
\centering
\includegraphics[width=0.46\textwidth]{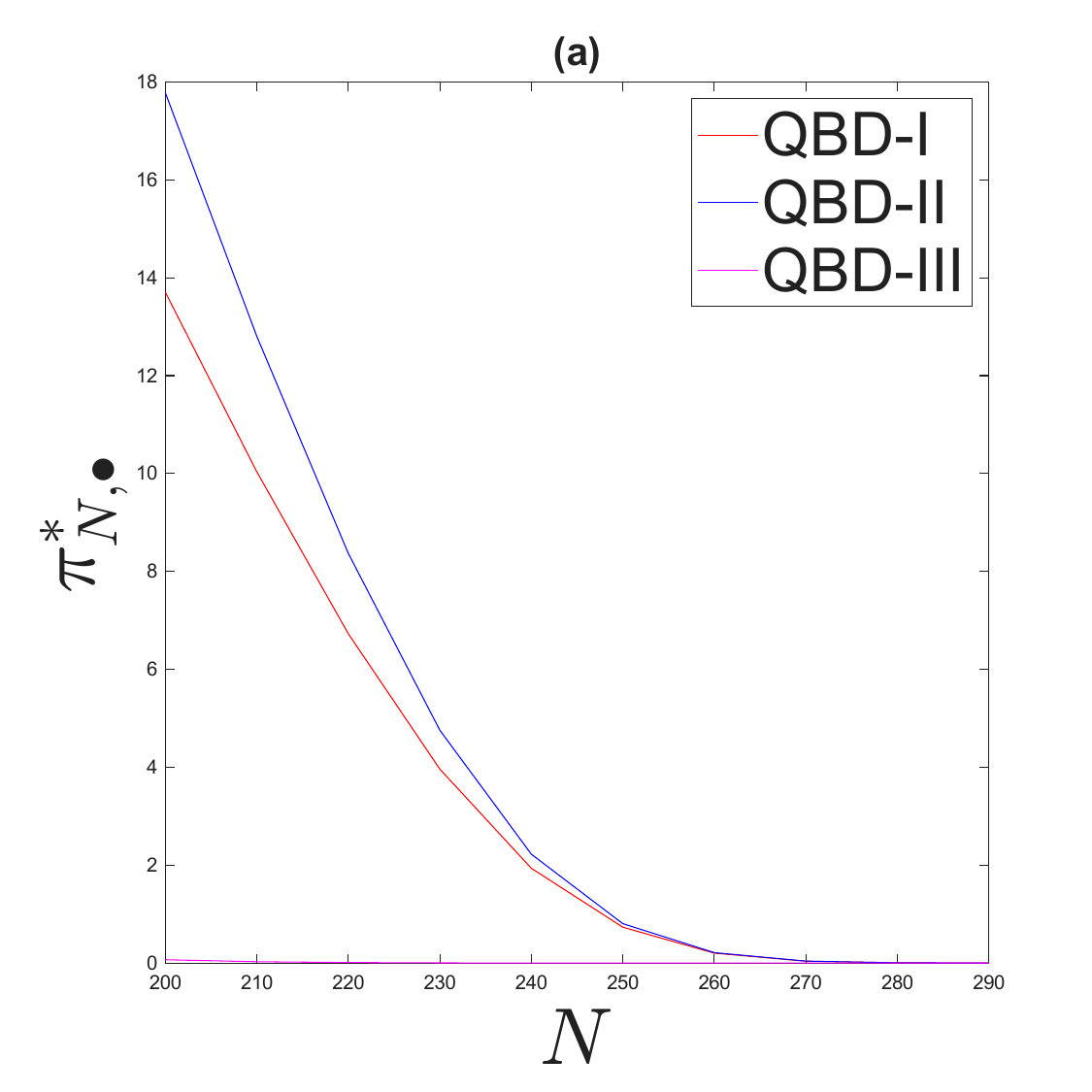}
\hfill
\includegraphics[width=0.46\textwidth]{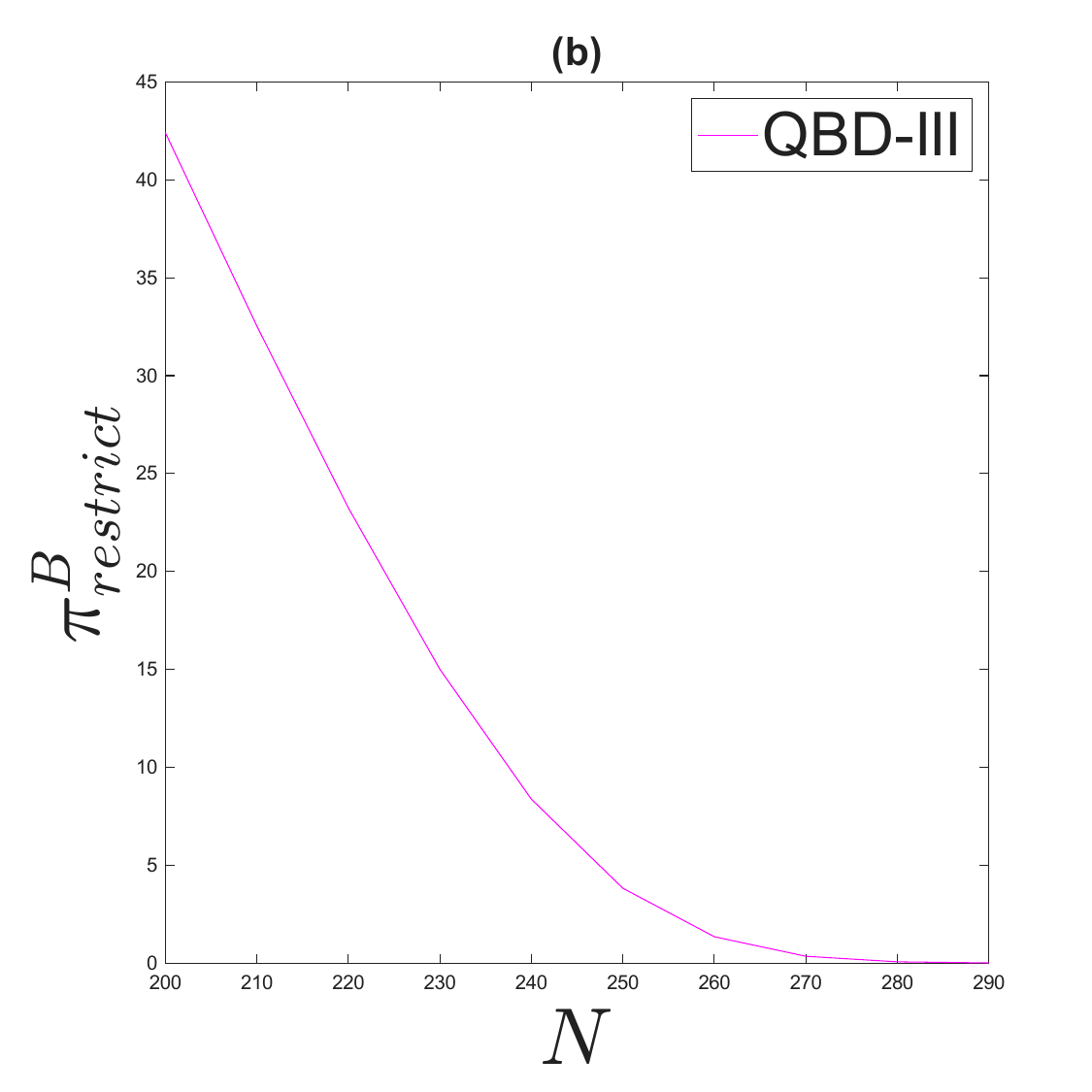}

\includegraphics[width=0.46\textwidth]{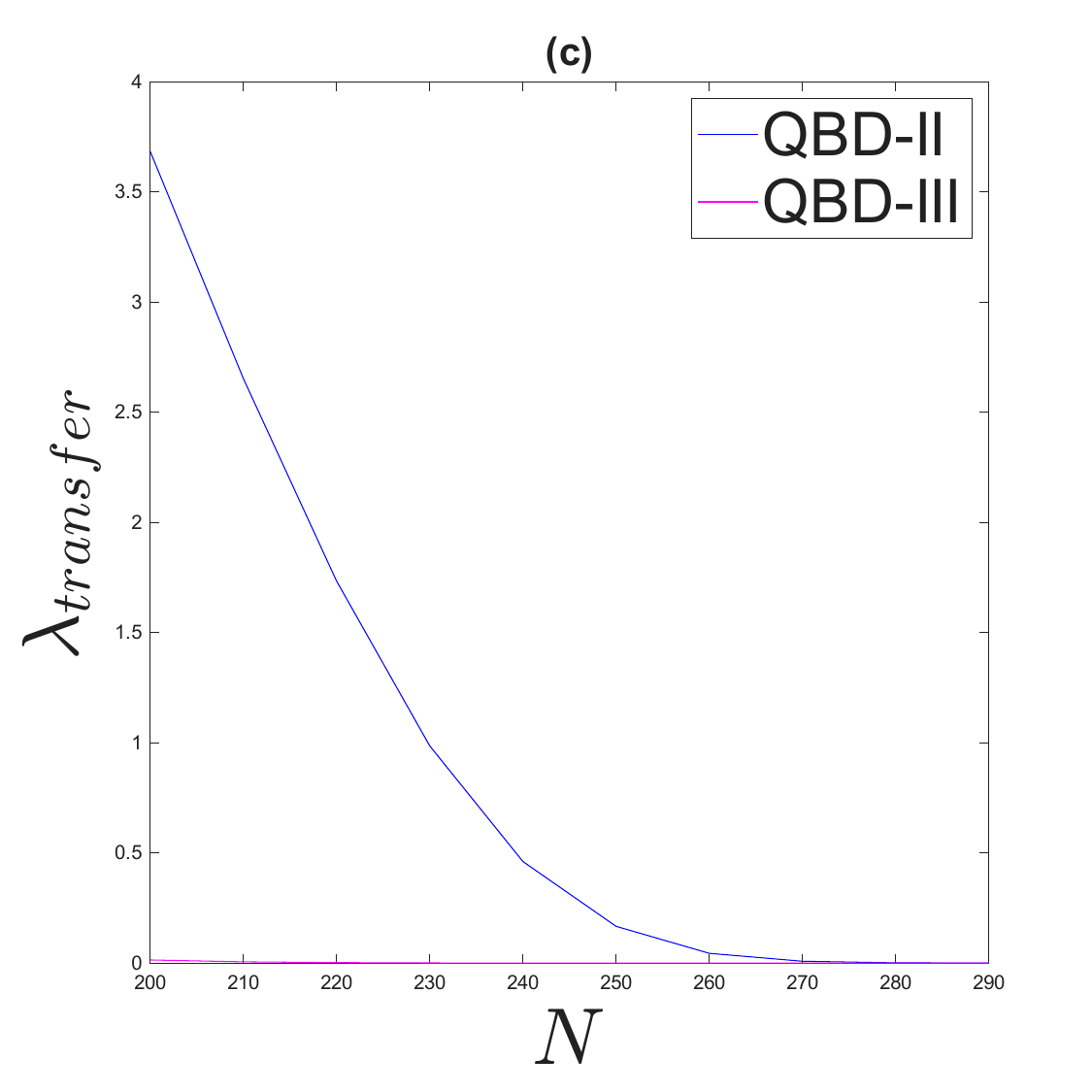}
\hfill
\includegraphics[width=0.46\textwidth]{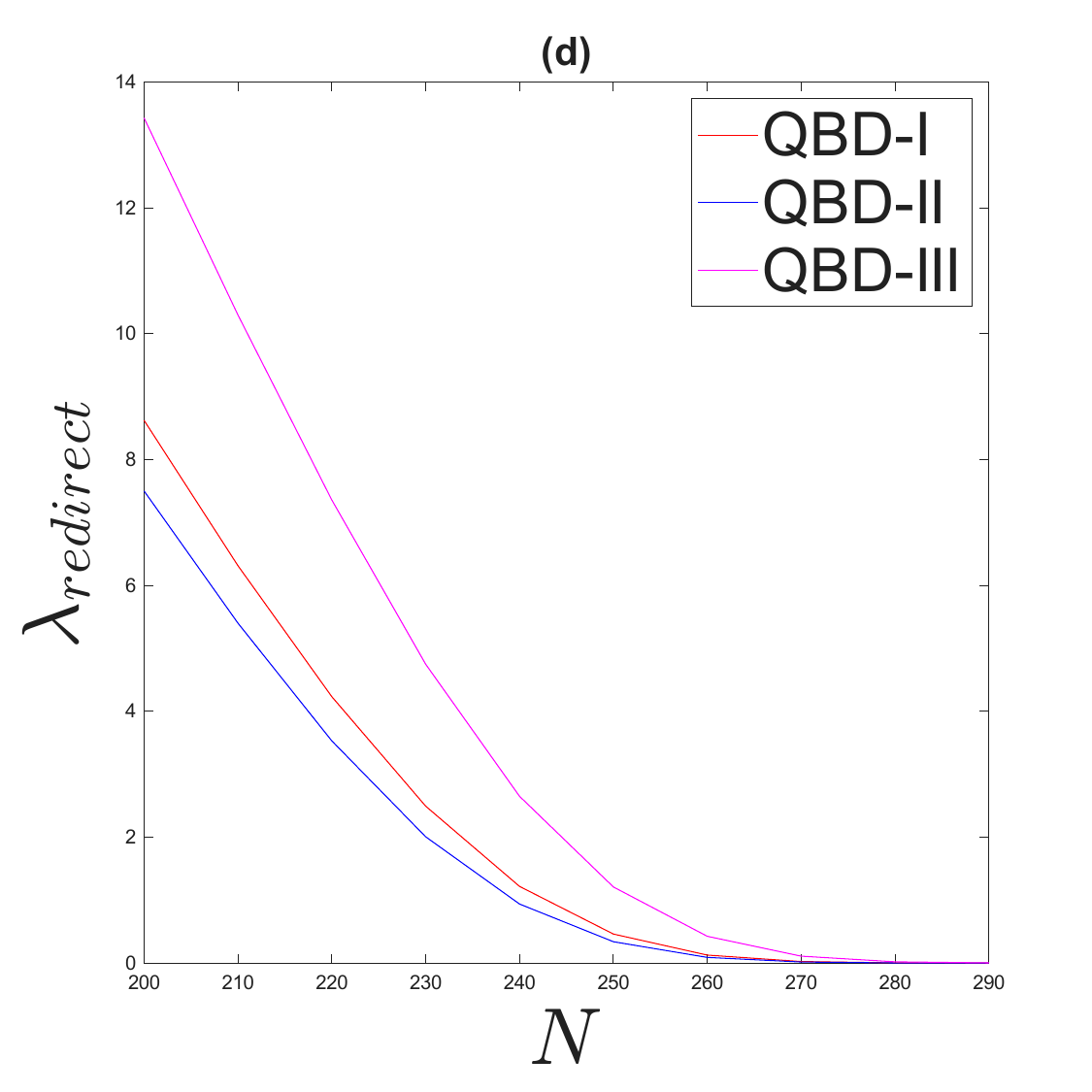}

\includegraphics[width=0.46\textwidth]{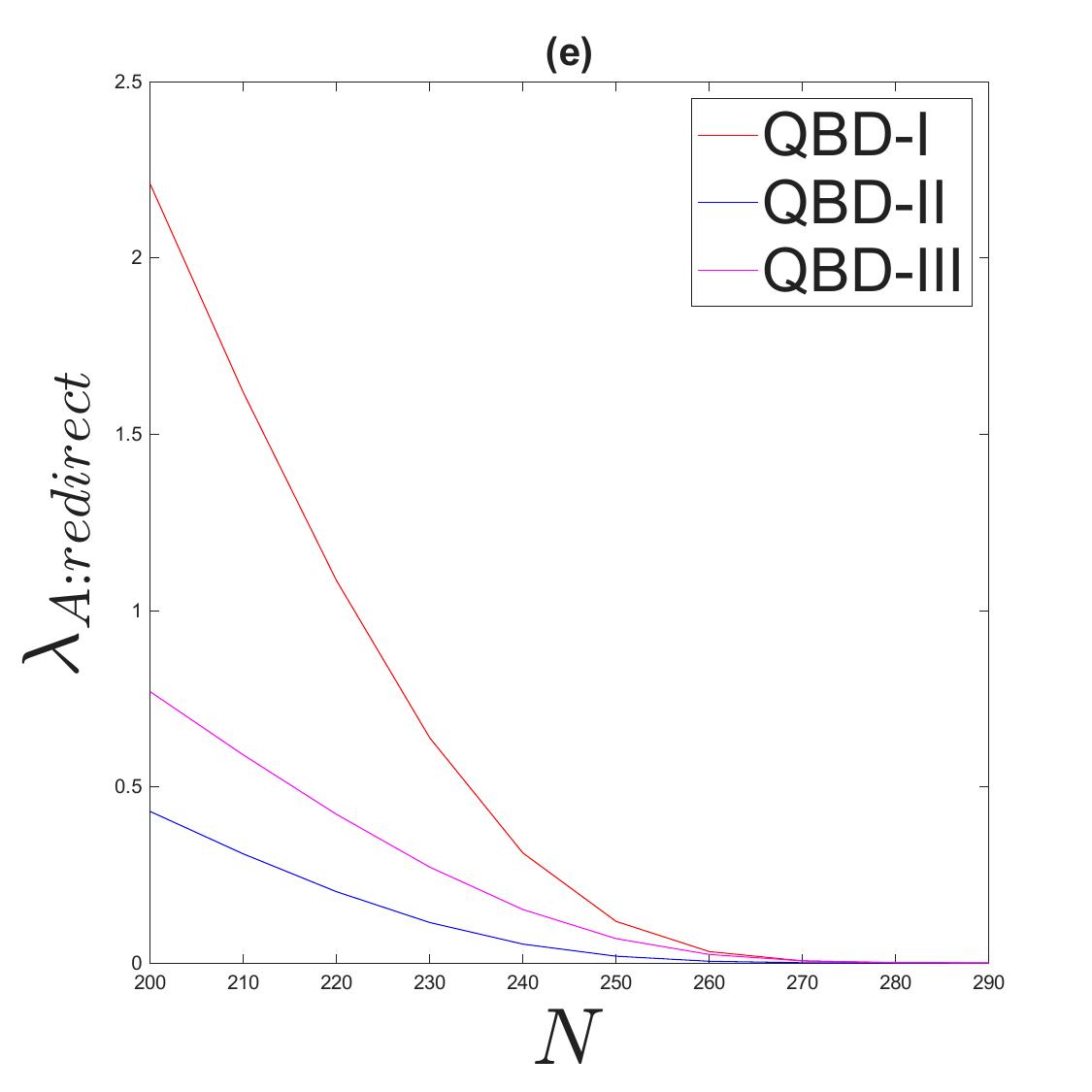}
\hfill
\includegraphics[width=0.46\textwidth]{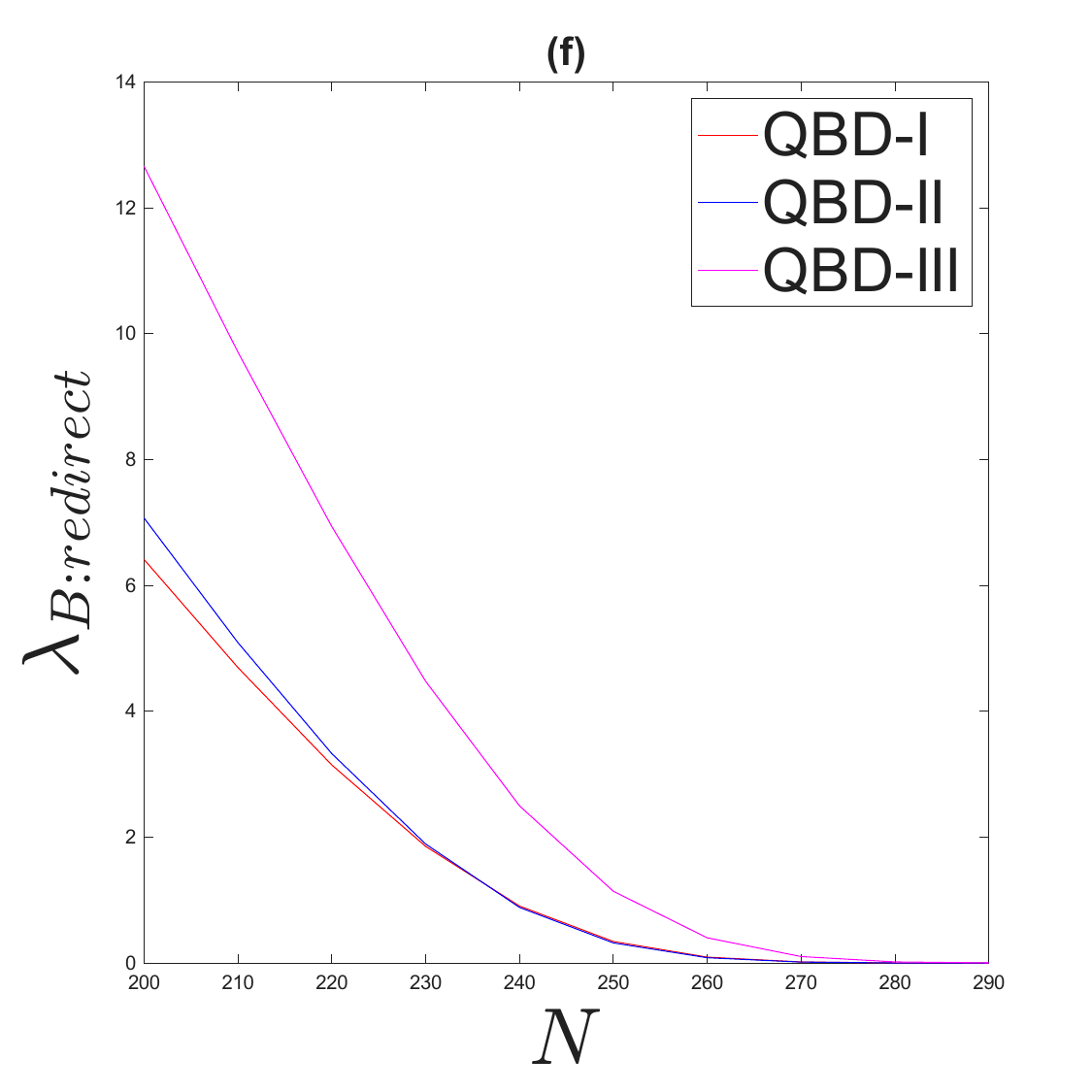}

\caption{$\pi^{*}_{N,\bullet}$, $\pi^{B}_{\text{restrict}}$, $\lambda_{\text{transfer}}$, $\lambda_{\text{redirect}}$, $\lambda_{A:\text{redirect}}$, and $\lambda_{B:\text{redirect}}$ as a function of $N$. We assumed $p_{AA}=0.85$, $p_{BA}=0.15$, $N=220$, $M_B=210$, $p=0.25$, and $\lambda_A$, $\lambda_B$, $\mu_A$, $\mu_B$ as given in Table~\ref{tab:parametersQBDs}.}
\label{metrics_as_N_I}
\end{figure}

\begin{figure}[htbp]
\centering
\includegraphics[width=0.48\textwidth]{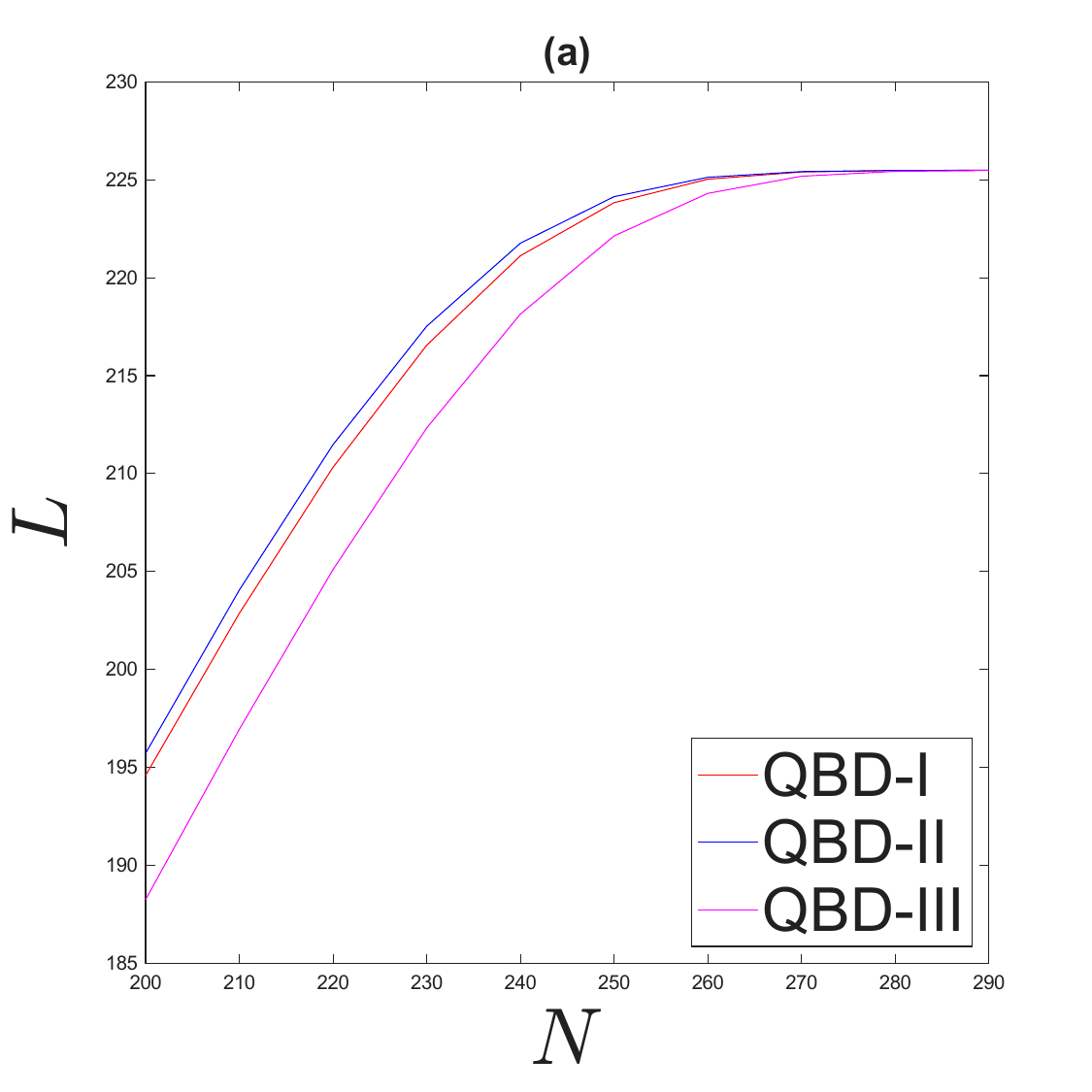}
\hfill
\includegraphics[width=0.48\textwidth]{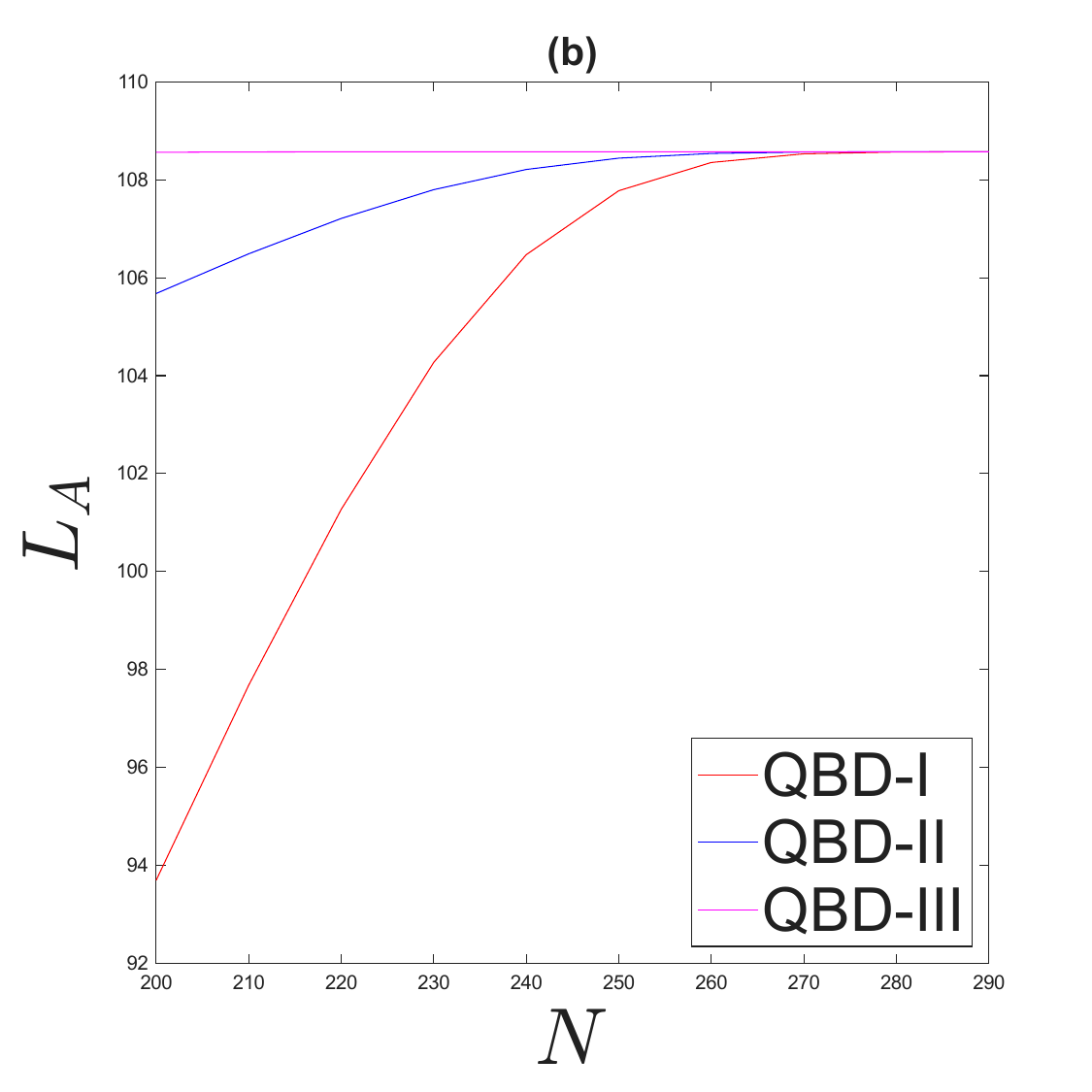}

\includegraphics[width=0.48\textwidth]{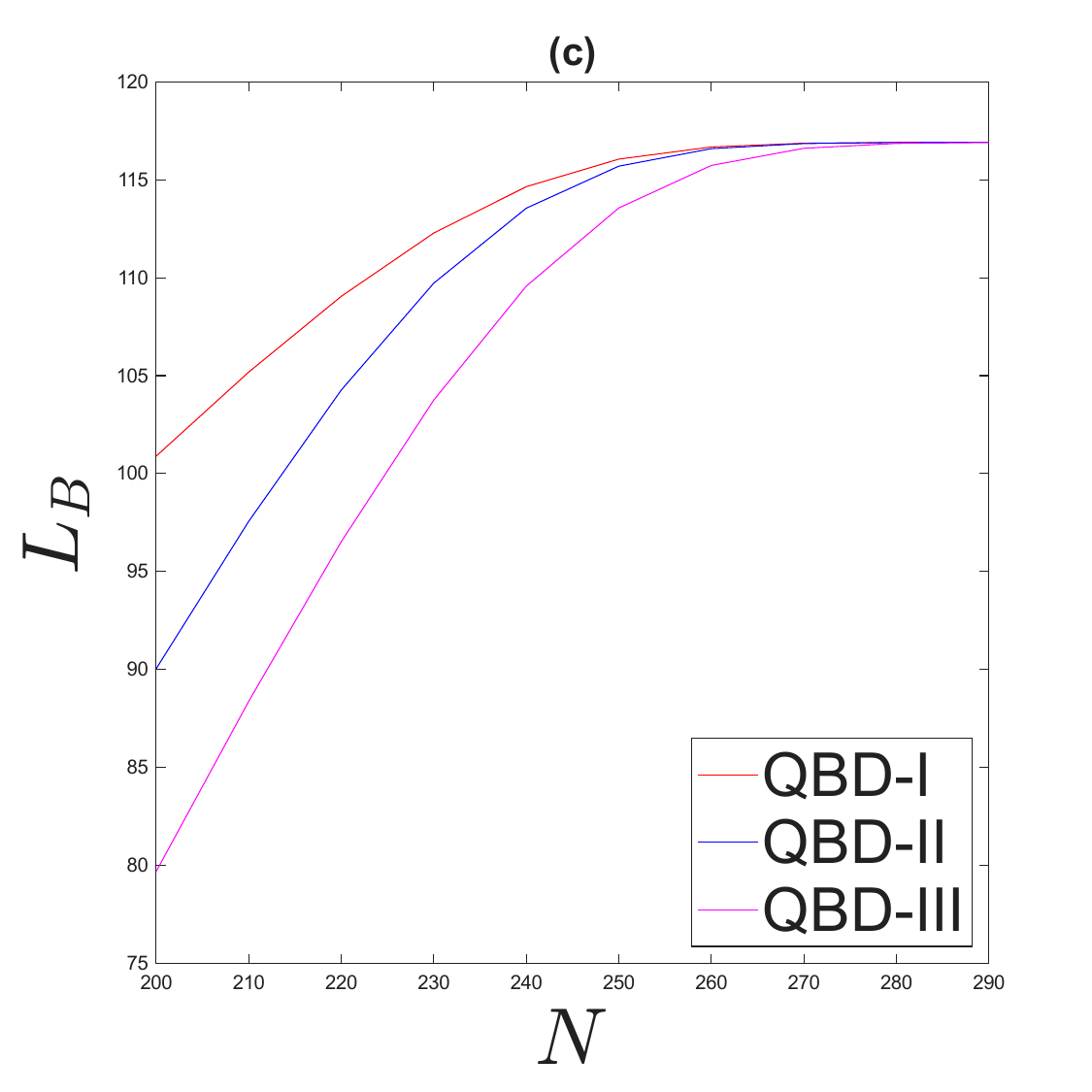}
\hfill
\includegraphics[width=0.48\textwidth]{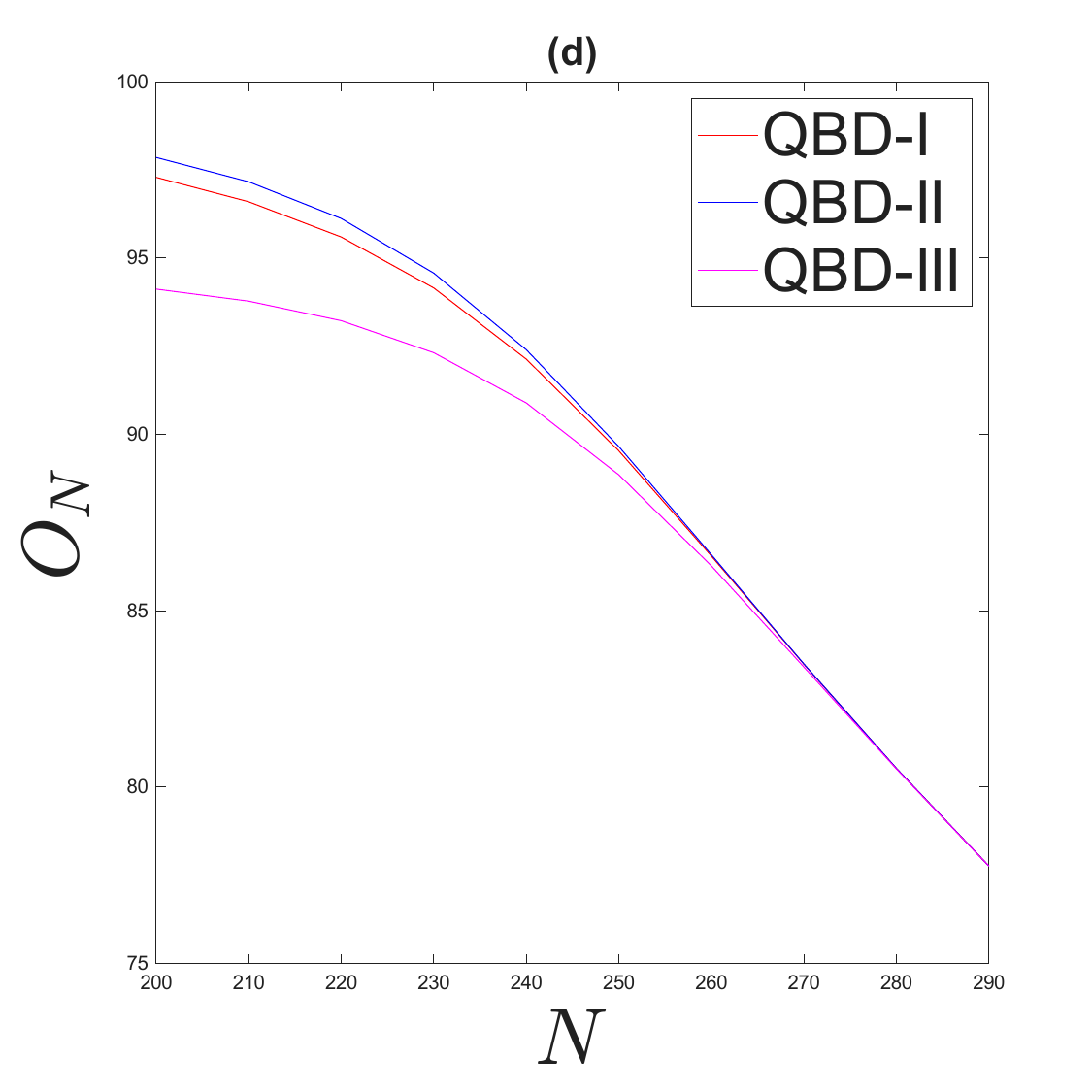}

\caption{$L$, $L_A$, $L_B$, and $O_N$ as a function of $N$. We assumed $p_{AA}=0.85$, $p_{BA}=0.15$, $N=220$, $M_B=210$, $p=0.25$, and $\lambda_A$, $\lambda_B$, $\mu_A$, $\mu_B$ as given in Table~\ref{tab:parametersQBDs}.}
\label{metrics_as_N_II}
\end{figure}

\begin{figure}[htbp]
\centering
\includegraphics[width=0.48\textwidth]{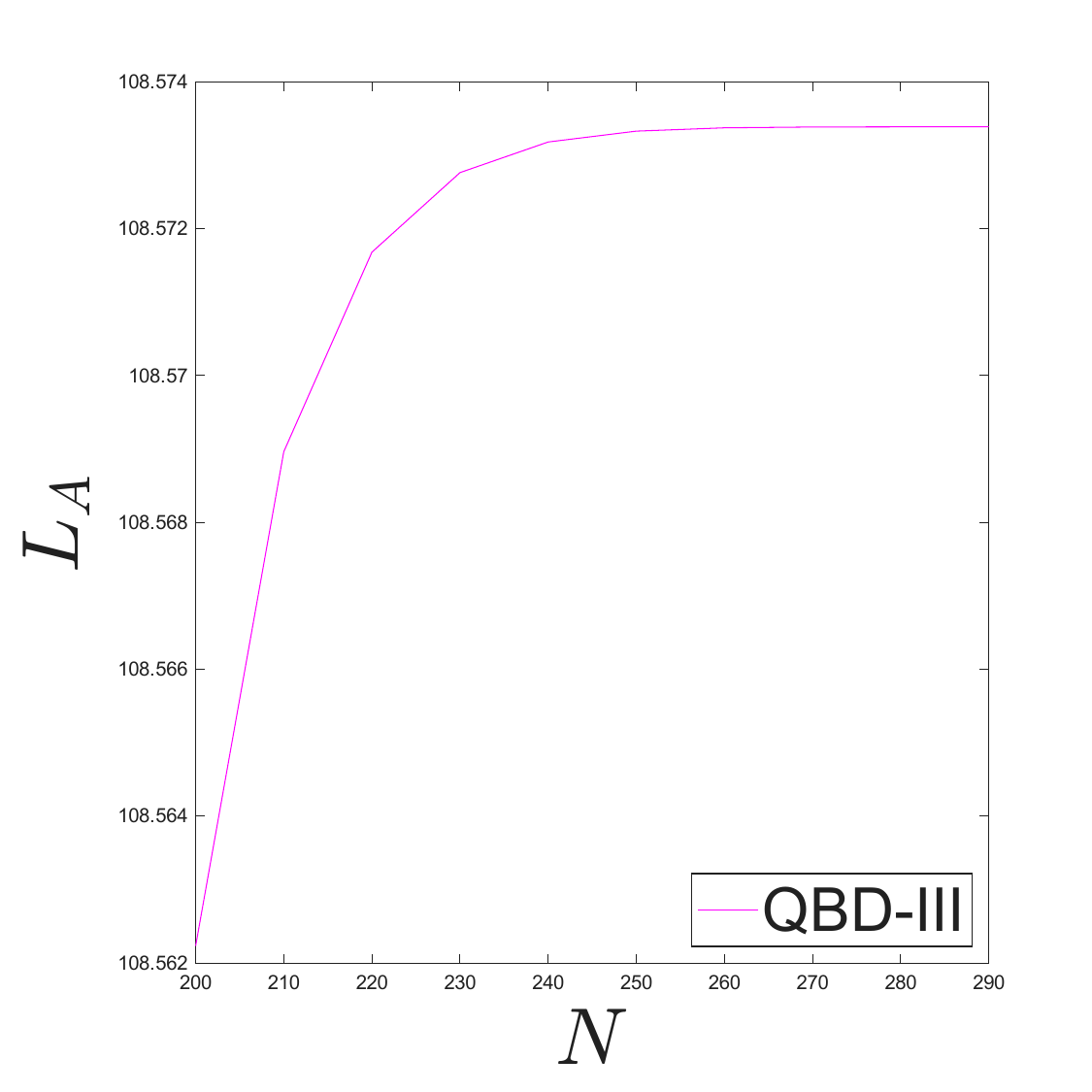}

\caption{$L_A$ as a function of $N$, under the model QBD-III. We assumed $p_{AA}=0.85$, $p_{BA}=0.15$, $N=220$, $M_B=210$, $p=0.25$, and $\lambda_A$, $\lambda_B$, $\mu_A$, $\mu_B$ as given in Table~\ref{tab:parametersQBDs}.}
\label{QBDIII_LA_N_with_N_MB_10}
\end{figure}

\subsection{Hitting time analysis}
\label{sec:hitting_tim_analysis}

Later, in Section~\ref{sec:CostAnalysis}, we consider the distribution of {\em costs} accumulated at the times until $k$ beds become available. So it is useful to also analyse the distribution of the corresponding first hitting times. Therefore, we compute the distribution of the time until $k$ beds become available, for $k \in \{5,10,15\}$, conditional on the system initially being full $(n=N=220)$ with $i=120$ complex patients. Computations are performed using Algorithm~\ref{Gs_algorithm} in Appendix~\ref{AppendixA} and the numerical inversion methods in Den Iseger~\cite{DenIseger_2006}. The results are shown in Figure~\ref{fig:first_hitting_times_below} and Table~\ref{tab:mean_time_below}.

The output indicates that the time until $k$ beds become available is much shorter under QBD-III compared to QBD-I and QBD-II, because the guard-channel threshold policy $(M_B, p)$ in QBD-III substantially supresses the arrivals once the occupancy reaches the threshold $M_B$. This reduces the likelihood that the system remains highly congested, leading to a significantly smaller percentage of time $\pi^{B}_{\text{restrict}}$ there are at least $M_B$ patients in the system, under QBD‑III, as evidenced in Table~\ref{tab:metrics_N220_QBDs}.

Further, the time until $k$ beds become available may be longer under QBD-II than QBD-I. This is because QBD-II prioritises the complex patients (Type‑A) through transfer policy and they remain in the system for longer periods due to longer lengths of stay, which prolongs the duration of high occupancy. 

These findings highlight the potential advantages of QBD‑III for hospital decision‑making, as the guard‑channel policy leads not only to reduced likelihood of congestion but also ensures faster recovery of beds once congestion occurs. In contrast, policies that rely solely on transfers (QBD‑II) may correspond to longer periods of high occupancy.

\begin{figure}[htbp]
\centering
\includegraphics[width=0.45\textwidth]{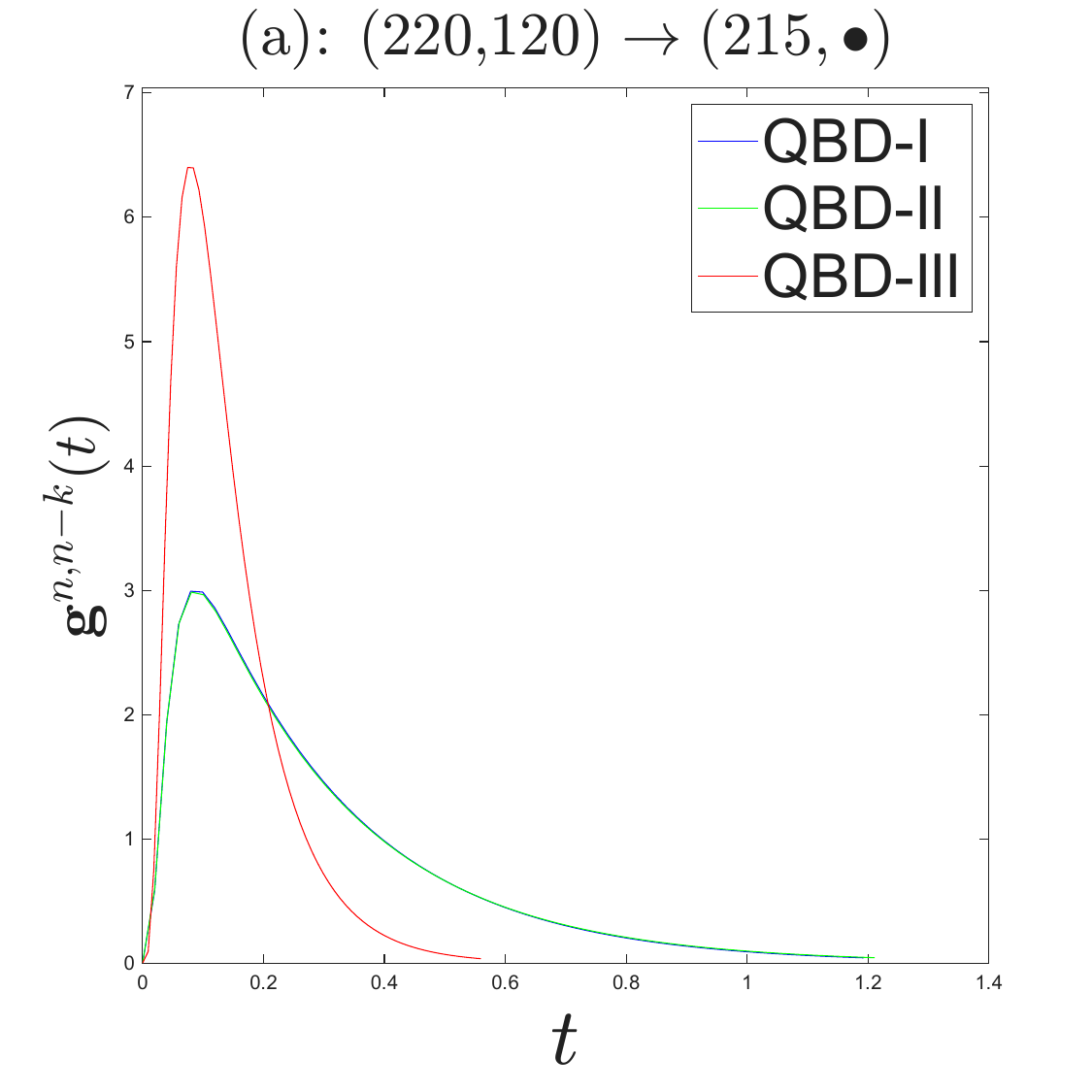}
\hfill
\includegraphics[width=0.45\textwidth]{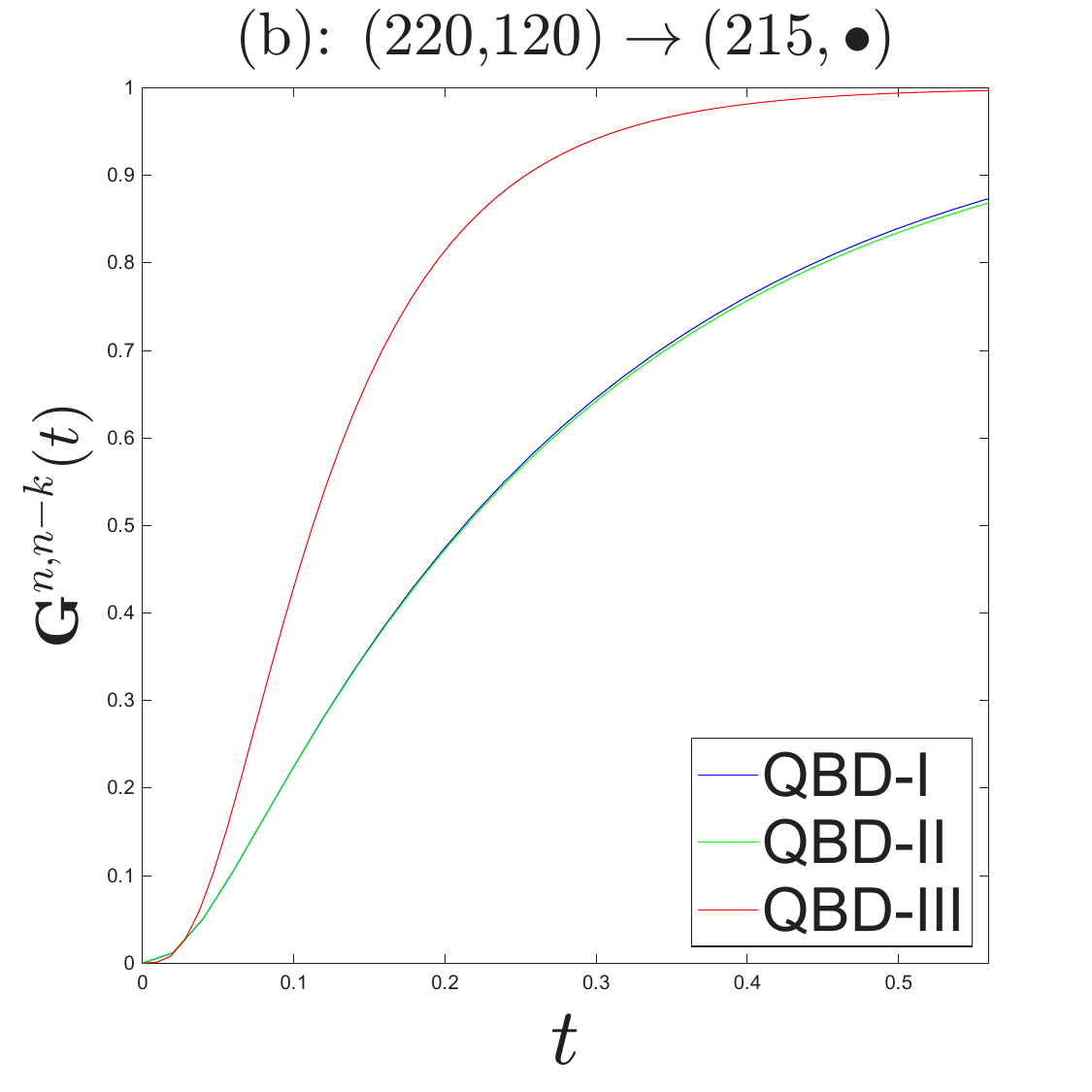}

\includegraphics[width=0.45\textwidth]{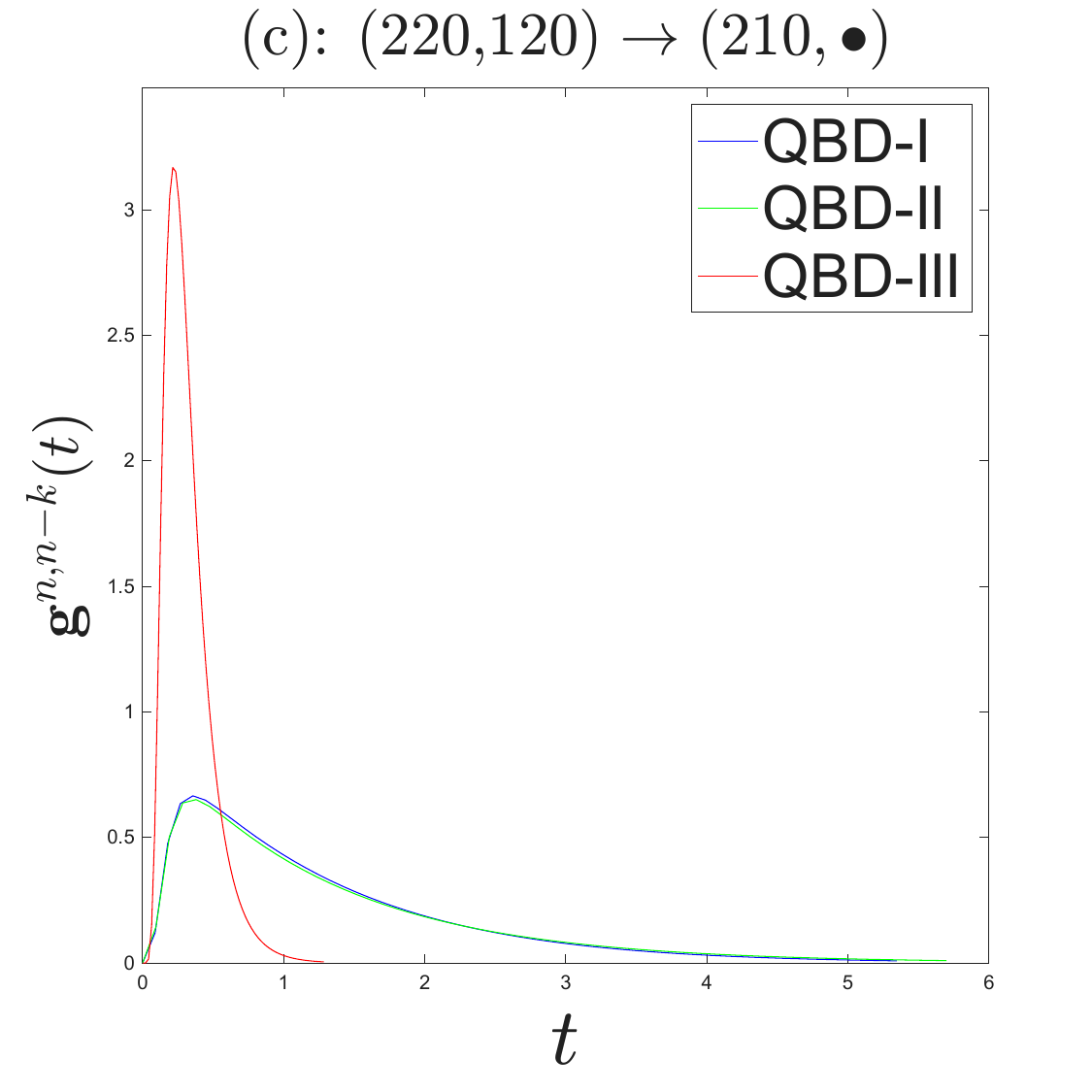}
\hfill
\includegraphics[width=0.45\textwidth]{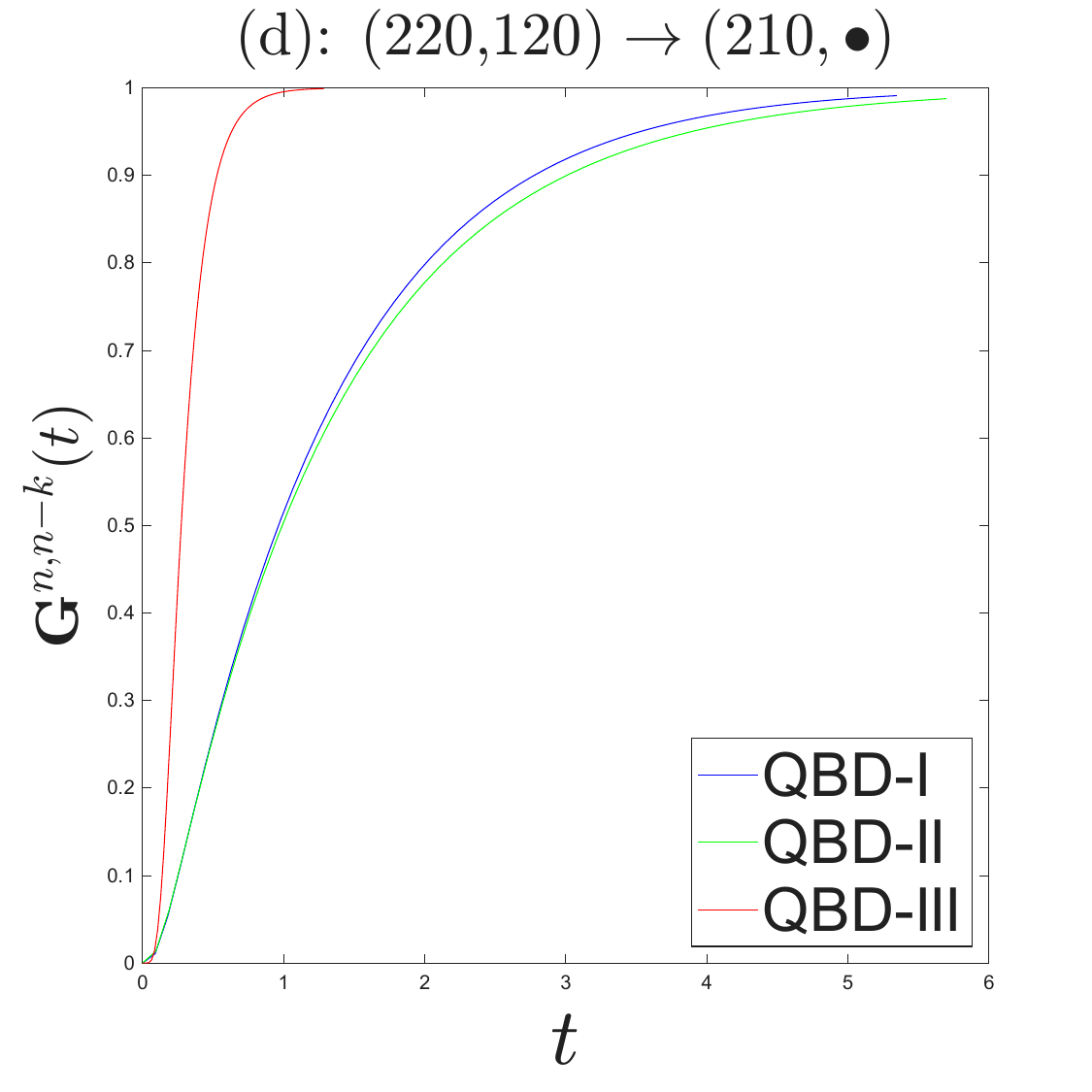}

\includegraphics[width=0.45\textwidth]{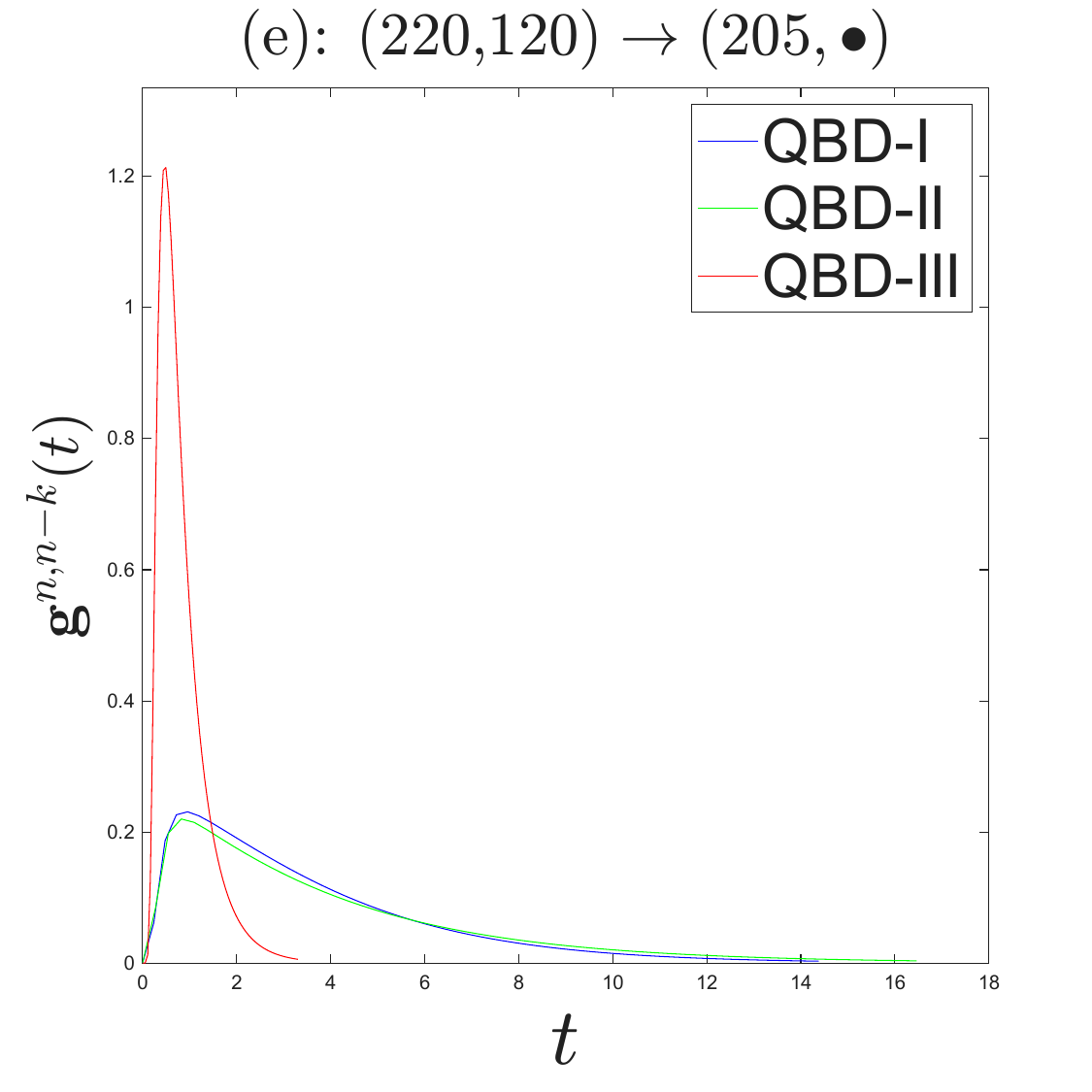}
\hfill
\includegraphics[width=0.45\textwidth]{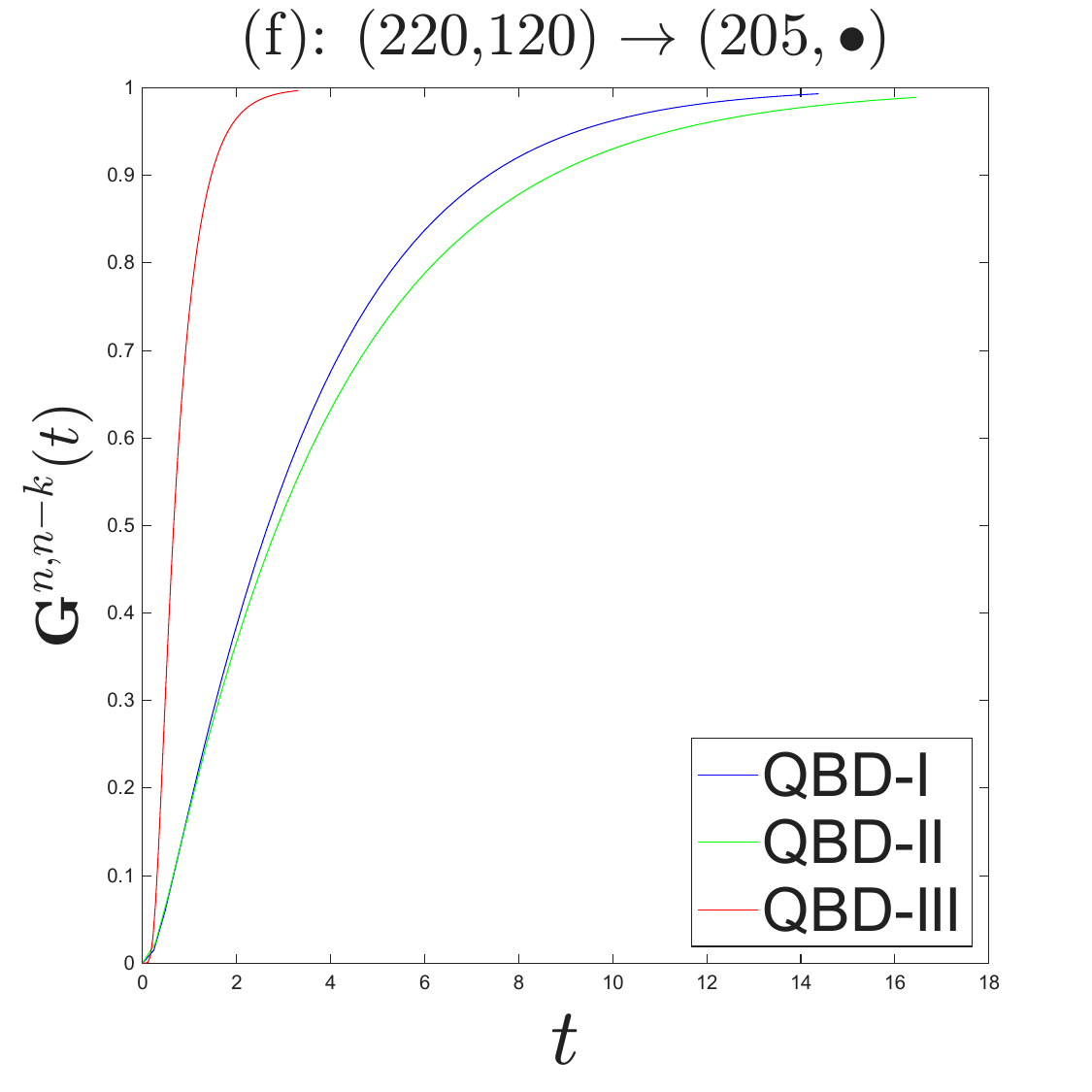}

\caption{Probability density ${\bf g}^{n,n-k}(t)$ and cumulative probability ${\bf G}^{n,n-k}(t)$ of the time until the $k=5,10,15$ beds become empty, given that the system is full $(n=N=220)$ with $i=120$ \emph{complex} patients. Here, $t$ denotes the time in days. We assumed $p_{AA}=0.85$, $p_{BA}=0.15$, $N=220$, $M_B=210$, $p=0.25$, and $\lambda_A$, $\lambda_B$, $\mu_A$, $\mu_B$ as given in Table~\ref{tab:parametersQBDs}.}
\label{fig:first_hitting_times_below}
\end{figure}

\begin{table}[htbp]
\centering
\renewcommand{\arraystretch}{1.2}

\begin{tabular}{lccc}
\hline
Models & $k=5$ & $k=10$ & $k=15$ \\
\hline

QBD-I   & $0.2982$ & $1.3368$ & $3.5944$ \\
QBD-II  & $0.3028$ & $1.4250$ & $4.1165$ \\
QBD-III & $0.1399$ & $0.3212$ & $0.8260$ \\

\hline
\end{tabular}
\caption{Mean time (in days) until $k$ beds become empty, given the system is full $(n=N=220)$ with $i=120$ \emph{complex} patients. We assumed $p_{AA}=0.85$, $p_{BA}=0.15$, $N=220$, $M_B=210$, $p=0.25$, and $\lambda_A$, $\lambda_B$, $\mu_A$, $\mu_B$ as given in Table~\ref{tab:parametersQBDs}.}

\label{tab:mean_time_below}
\end{table}

\subsection{Cost analysis
}\label{sec:CostAnalysis}

We now compute the distribution of the cost accumulated at the times until $k\in\{5,10,15\}$ beds become available, given the initial system is full $(n=N=220)$ with $i=120$ complex patients, using Algorithm~\ref{alg:multiplepair_C_n_nminusk} and the numerical inversion methods in Den Iseger~\cite{DenIseger_2006}. 

First, suppose a complexity-weighted congestion cost per unit time spent in state $(n,i)$ is given by
\begin{eqnarray}
    c(n,i) = c_A i + c_B (n-i),
\end{eqnarray}
where $n$ is the total number of patients, $i$ denotes the number of Type‑A (complex) patients in the system, $c_A$ is the cost accumulated per unit time for each Type‑A (complex) patient in the system, and $c_B$ is the cost accumulated per unit time for each Type‑B (other) patient. Also, we assume that the specified set of levels is $\mathcal{A}=\{N-k+1,\ldots,N-1,N\}$ for $k\in\{5,10,15\}$.

Next, we assume that complex patients on average place higher demands on clinical resources than other patients, and to reflect this, let $c_A=1$ and $c_B=0.4$. The resulting cost distributions for the three models QBD‑I, QBD‑II, and QBD‑III, are presented in Figure~\ref{fig:first_hitting_costs_below}. The first, second, and third rows in Figure~\ref{fig:first_hitting_costs_below} correspond to $k=5$, $k=10$, and $k=15$, respectively. Table~\ref{tab:mean_cost_below} shows the mean costs.

For all $k$, the cost densities ${\bf c}^{n,n-k}(z)$ are concentrated near smaller values of $z$, for QBD-III, indicating that the cost accumulated until $k$ beds become available tends to be lower under QBD-III, compared to QBD-I and QBD-II. In contrast, the densities ${\bf c}^{n,n-k}(z)$ for QBD-I and QBD-II have much wider shape and long right-hand tails, which means that higher cost accumulated until $k$ beds become available is more likely to occur under these models. Further, QBD‑II consistently exhibits marginally heavier tails across all values of $k$, indicating that the cost accumulated under QBD-II can be slightly higher than QBD-I. The difference in the densities ${\bf c}^{n,n-k}(z)$ of QBD-I and QBD-II becomes more prominent as we increase the value of $k$.

A closer inspection of Figure~\ref{fig:first_hitting_costs_below} highlights the magnitude of these differences. For $k=5$, Figure~\ref{fig:first_hitting_costs_below}(a) shows that most of the probability mass for QBD‑III is concentrated below $z=40$, while the corresponding cumulative distribution in Figure~\ref{fig:first_hitting_costs_below}(b) indicates that there is almost $90\%$ probability that no more than $z=40$ units of cost are required until $k=5$ beds become available, under QBD-III. For the QBD-I and QBD-II, this probability is almost $60\%$. For $k=10$, most of the density under QBD‑III lies below $z=100$ (Figure~\ref{fig:first_hitting_costs_below}(c)), and Figure~\ref{fig:first_hitting_costs_below}(d) shows that the probability of incurring at most $z=100$ units of cost until $k=10$ beds become available is close to $95\%$, whereas it remains around $35\%$ under QBD‑I and QBD‑II. Similarly, for $k=15$, Figure~\ref{fig:first_hitting_costs_below}(e) indicates that the density for QBD‑III is largely concentrated below $z=500$, and Figure~\ref{fig:first_hitting_costs_below}(f) shows that the probability of requiring no more than $500$ units of cost until $k=15$ beds become available is close to $1$ under QBD‑III, compared to approximately $60\%$ under QBD‑I and QBD‑II.

These results demonstrate that the guard‑channel threshold policy $(M_B, p)$ not only accelerates the recovery of the capacity following congestion but also substantially reduces the associated cost. On the other hand, policies relying on redirection (QBD-I) or transfers (QBD-II) may leave the system vulnerable to prolonged periods of high cost accumulation. Therefore, reserving a fraction of beds for complex patients might be a suitable decision to mitigate congestion and enable faster and cost-efficient recovery of beds following congestion, while maintaining resilience to unforeseen demand.

\begin{table}[htbp]
\centering
\renewcommand{\arraystretch}{1.2}

\begin{tabular}{lccc}
\hline
Models & $k=5$ & $k=10$ & $k=15$ \\
\hline

QBD-I   & $47.40$ & $209.91$ & $555.65$ \\
QBD-II  & $48.38$ & $226.50$ & $647.79$ \\
QBD-III & $22.34$ & $50.98$  & $129.88$ \\

\hline
\end{tabular}

\caption{Mean units of cost until $k$ beds become empty, given the system is full $(n=N=220)$ with $i=120$ \emph{complex} patients. We assumed $p_{AA}=0.85$, $p_{BA}=0.15$, $N=220$, $M_B=210$, $p=0.25$, and $\lambda_A$, $\lambda_B$, $\mu_A$, $\mu_B$ as given in Table~\ref{tab:parametersQBDs}.}
\label{tab:mean_cost_below}
\end{table}

\begin{figure}[htbp]
\centering

\includegraphics[width=0.45\textwidth]{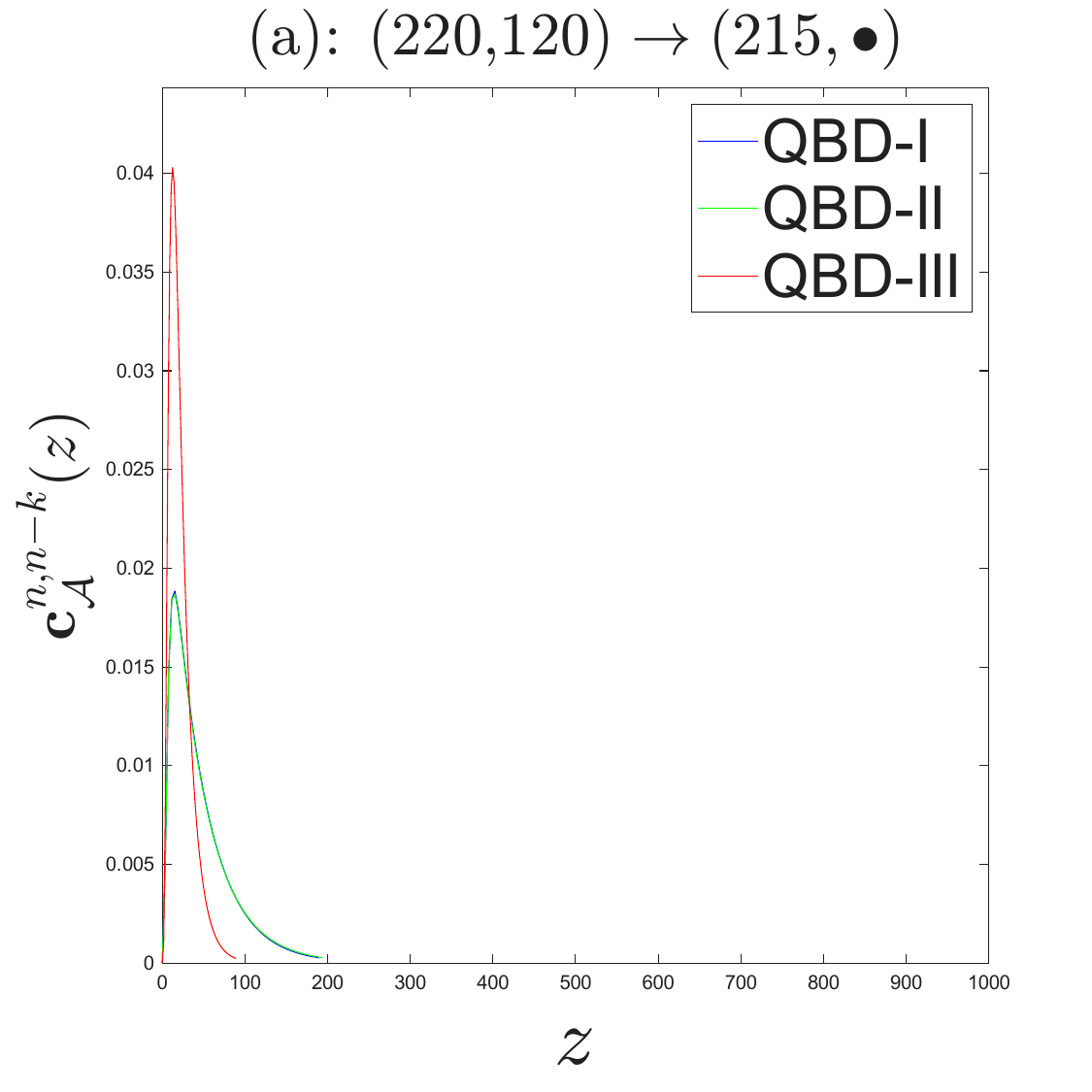}
\hfill
\includegraphics[width=0.45\textwidth]{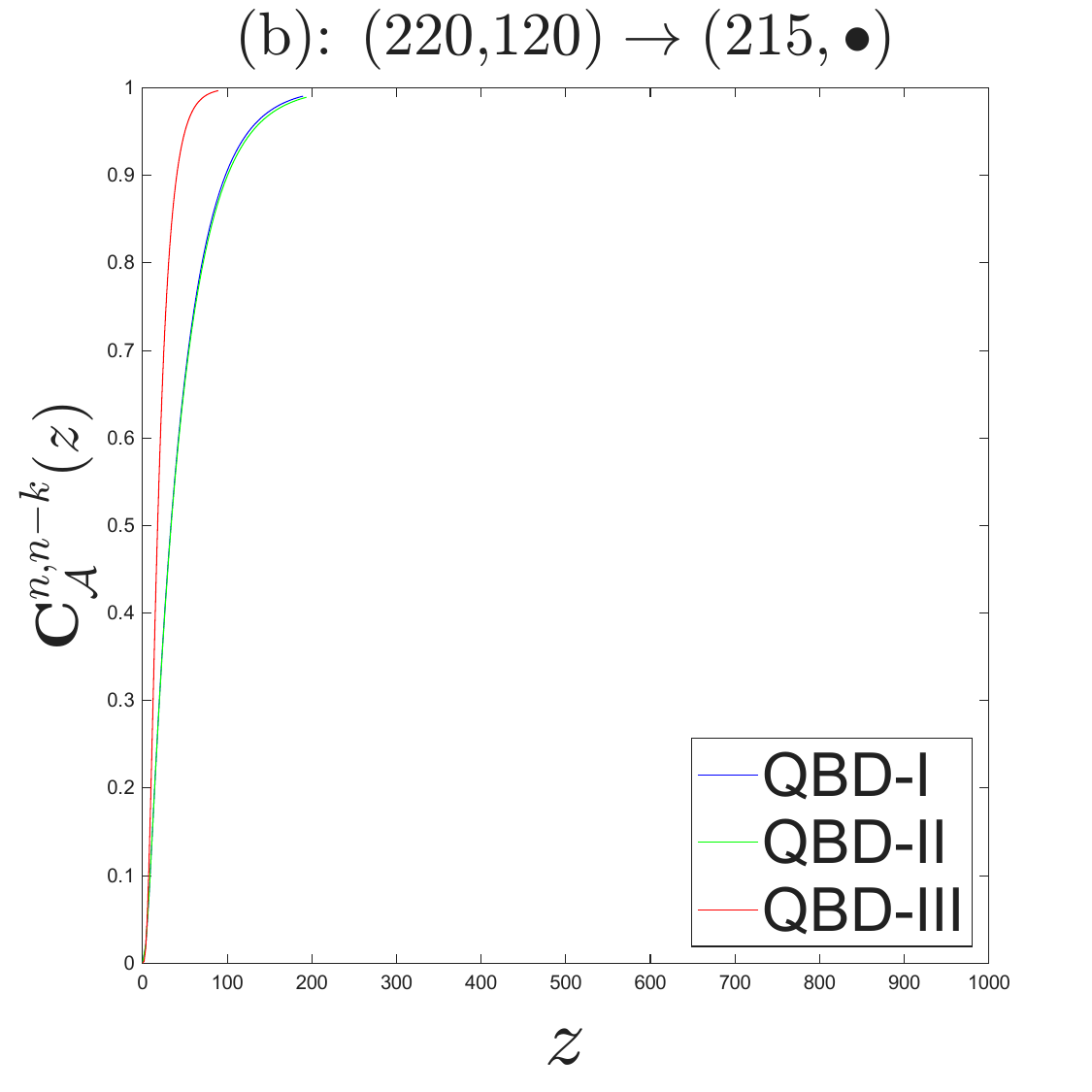}

\includegraphics[width=0.45\textwidth]{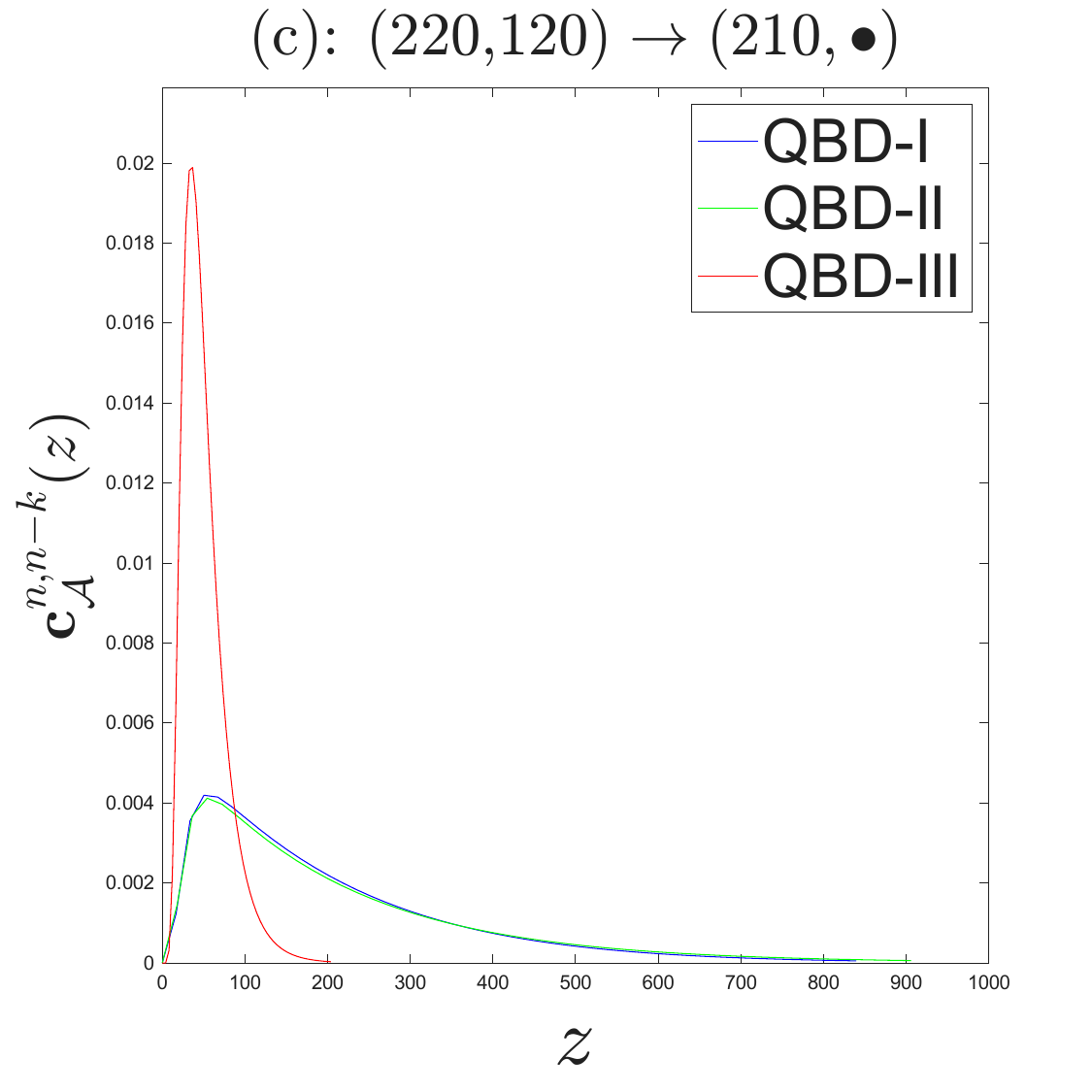}
\hfill
\includegraphics[width=0.45\textwidth]{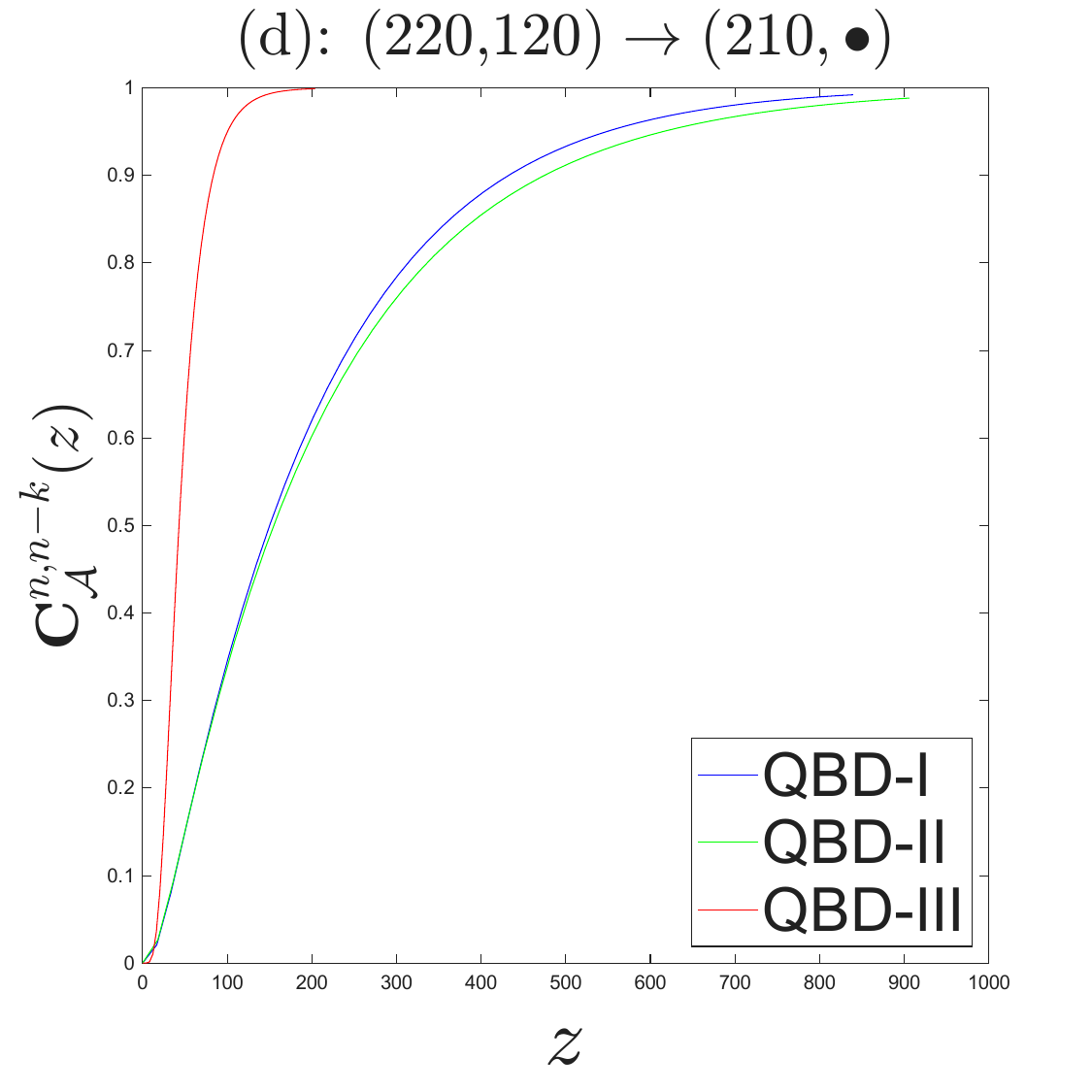}

\includegraphics[width=0.45\textwidth]{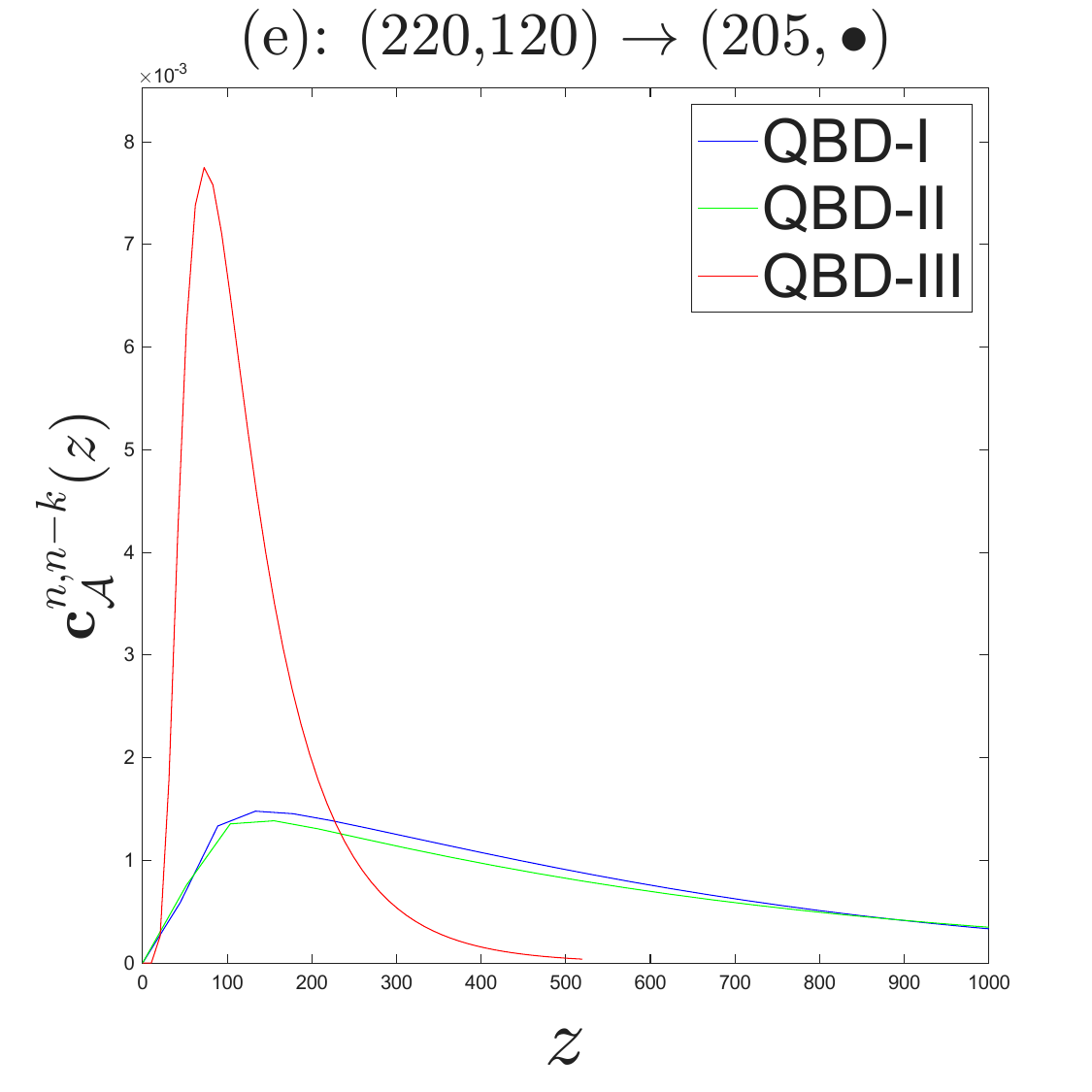}
\hfill
\includegraphics[width=0.45\textwidth]{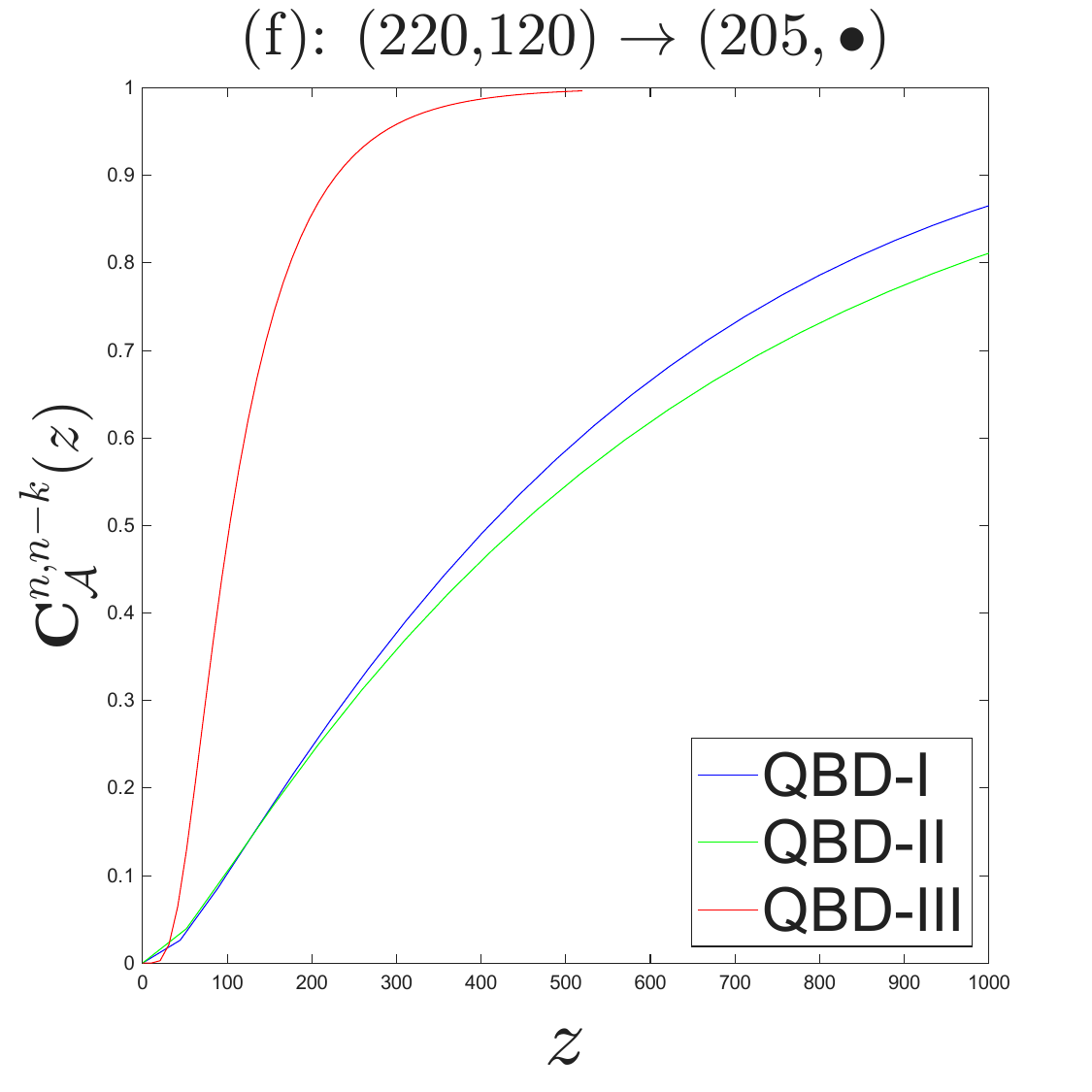}

\caption{Probability density ${\bf c}^{n,n-k}(z)$ and cumulative probability ${\bf C}^{n,n-k}(z)$ of the cost accumulated until the $k=5,10,15$ beds become empty, given that the system is full $(n=N=220)$ with $i=120$ \emph{complex} patients. Here, $z$ denotes the units of the cost accumulated per day. We assumed $p_{AA}=0.85$, $p_{BA}=0.15$, $N=220$, $M_B=210$, $p=0.25$, and $\lambda_A$, $\lambda_B$, $\mu_A$, $\mu_B$ as given in Table~\ref{tab:parametersQBDs}.}
\label{fig:first_hitting_costs_below}
\end{figure}

\subsection{Sensitivity}

We evaluate the sensitivity of the densities ${\bf c}^{n,n-k}_{\mathcal A}(z)$ with respect to the model parameters $\lambda_A$, $\lambda_B$, $\mu_A$, and $\mu_B$, using Algorithm~\ref{alg:C_n_nminusk_der_mem} and the numerical inversion methods in Den Iseger~\cite{DenIseger_2006}. The outputs are presented in Figure~\ref{fig:sensitivity_QBDI_QBDII_QBDIII}. 

We observe that as $\lambda_A$ or $\lambda_B$ increases, the corresponding derivative of ${\bf c}^{n,n-k}_{\mathcal A}(z)$ changes its sign from negative to positive at some point $z^{*}$, and then later, approaches zero as $z$ increases. This indicates that the values of ${\bf c}^{n,n-k}_{\mathcal A}(z)$ get smaller for all $z<z^{*}$ and larger for all $z>z^{*}$ than before, when we increase $\lambda_A$ or $\lambda_B$. That is, there is a higher probability of observing larger values of costs $z$ (since probability of observing some values of $z$ is an area under the density in the corresponding range). The opposite is true for the increasing values of $\mu_A$ or $\mu_B$, as expected.

Indeed, when $\lambda_A$ or $\lambda_B$ increases, more patients are arriving to the system each day. Consequently the times and so the costs until $k$ beds become available following the congestion increases. The corresponding density curves in Figure~\ref{fig:first_hitting_costs_below} would in such case shift to the right, implying a decrease in density for lower cost values $z$, an increase for moderate values, and little or no change for large values. In contrast, when $\mu_A$ or $\mu_B$ increases, more patients are departing from the system each day, which results in shorter times and lower costs to free $k$ beds following congestion. The corresponding density curves in Figure~\ref{fig:first_hitting_costs_below} would in such case shift to the left, indicating an increase in density for lower cost values $z$, a decrease for moderate values, and little or no change for larger values.

These findings highlight that an increase in demand reduces the likelihood of low-cost recovery of beds following congestion. We note that the hospitals operating near-full capacity are highly susceptible to demand fluctuations and may experience significant cost escalation driven by prolonged high-occupancy periods. Therefore, the admission policy regulation and guard-channel thresholds as discussed in our examples, can assist with the effective management of patient flow.

\begin{figure}[htbp]
\centering

\includegraphics[width=0.47\textwidth]{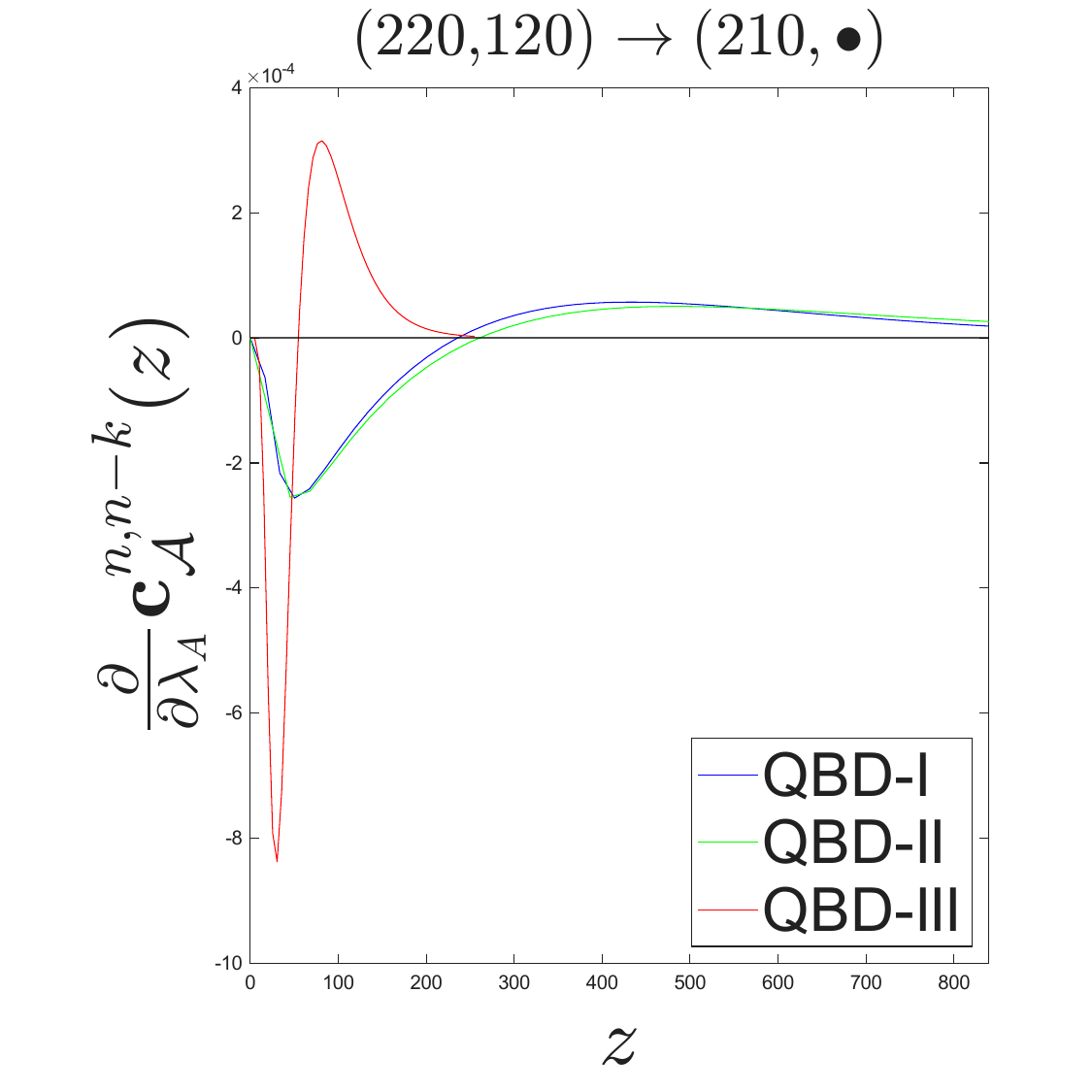}
\hfill
\includegraphics[width=0.47\textwidth]{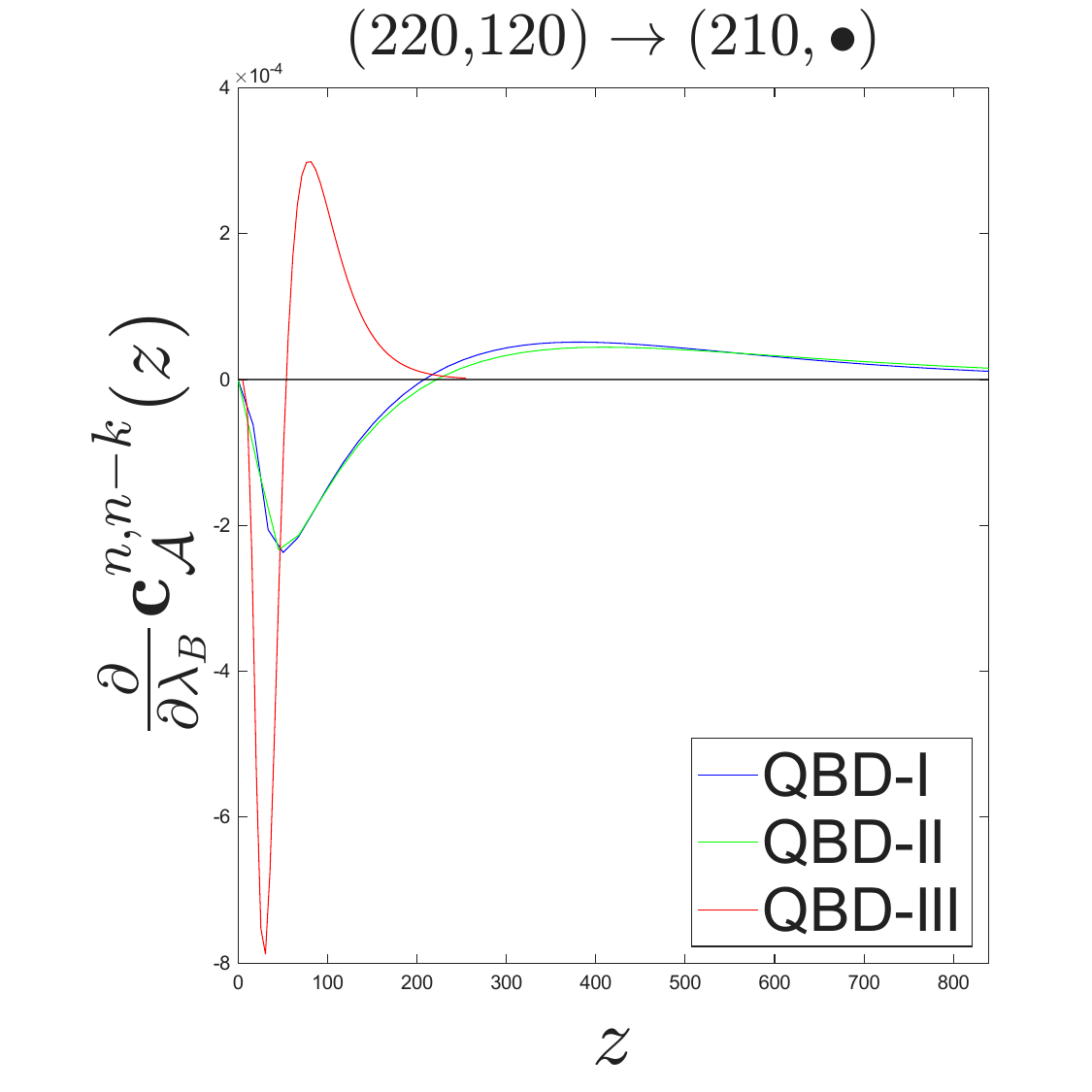}

\includegraphics[width=0.47\textwidth]{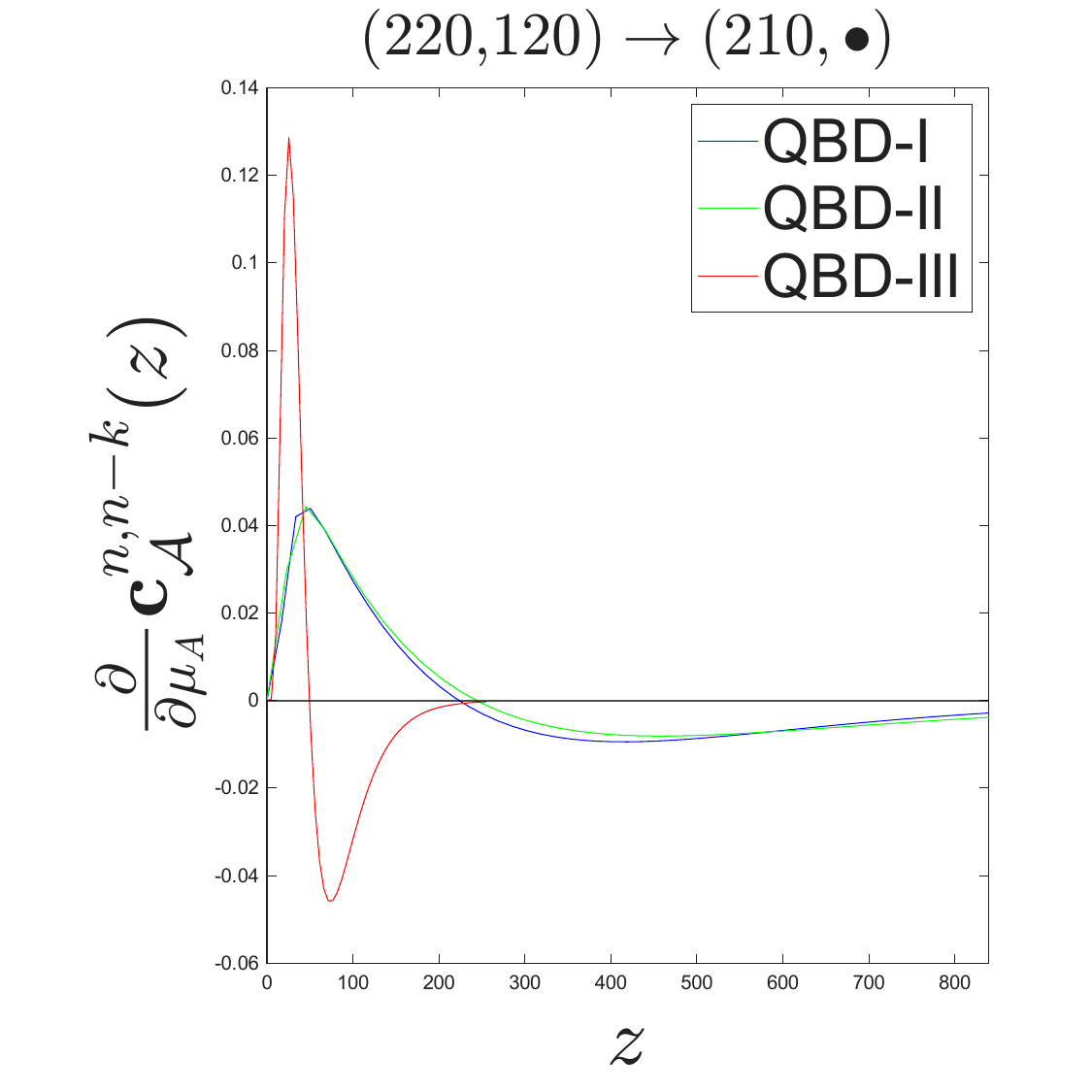}
\hfill
\includegraphics[width=0.47\textwidth]{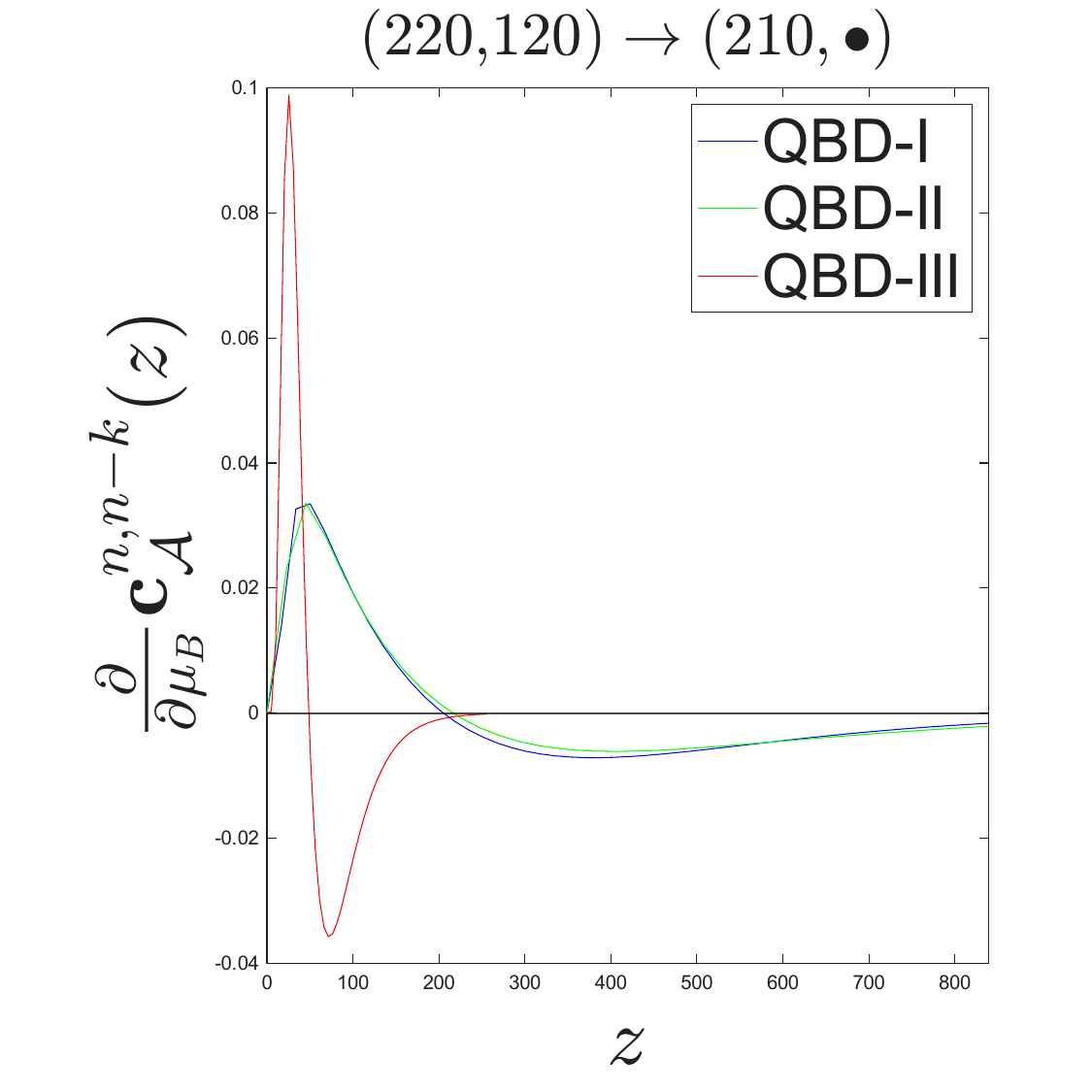}

\caption{Sensitivity of the distribution of cost ${\bf c}^{n,n-k}_{\mathcal A}(z)$ accumulated until $k=10$ beds become available, given the system is full $(n=N=220)$ with $i=120$ complex patients, with respect to model parameters $\lambda_A$, $\lambda_B$, $\mu_A$, and $\mu_B$ under the model QBD-I. Here, $z$ denotes the units of the cost accumulated per day. We assumed $p_{AA}=0.85$, $p_{BA}=0.15$, $N=220$, $M_B=210$, $p=0.25$, and $\lambda_A$, $\lambda_B$, $\mu_A$, $\mu_B$ as given in Table~\ref{tab:parametersQBDs}.}
\label{fig:sensitivity_QBDI_QBDII_QBDIII}
\end{figure}

\section{Conclusions}\label{sec:Conclusion}

We considered level-dependent quasi-birth-and-death (LD-QBD) process and its application to the cost analysis of multi-server systems. We derived analytical expressions and recursive algorithms to compute the Laplace–Stieltjes transforms (LSTs) of the distribution of costs accumulated during the time the system spends within specified occupancy thresholds. We also developed results and algorithms for the sensitivity analysis of the LSTs of the distribution of costs. We discussed algorithmic complexity and gave memory efficient versions of the proposed algorithms. 

The results presented here have potential application in many service systems, such as healthcare, as discussed throughout the paper. We illustrated the application potential of our results in multi-server systems through numerical examples with parameters based on data obtained from a tertiary referral hospital in Australia. In our examples, we gave insights useful in hospital management.

In our future work we aim to extend our results to time-inhomogeneous QBD processes, where transition rates vary with time to model demand fluctuation due to seasonal effects. Next, an interesting direction would be to apply phase-type distributions for customers' service time together with multiple customer types, enabling a more versatile representation of real-world systems. The current work also has the potential to be extended to QBD processes with jumps to model catastrophes.

\section{Acknowledgements}
\label{Acknowledgements}

\noindent{\bf Data}\\
\noindent Data used in the paper was obtained following ethical approval from the Tasmanian Health and Medical Human Research Ethics Committee (HREC No 23633) and site-specific approval from the Research Governance Office of the Tasmanian Health Service.

\bigskip\noindent{\bf Authorship contribution statement}\\
\noindent The following are the contributions of the authors, M. Abdullah Khokhar (MAK), Ma\l gorzata M. O'Reilly (MMO), and Richard Turner (RT): 
\begin{itemize}
	\item	Conceptualisation - mathematical background: MAK and MMO;
	\item  
	Conceptualisation - clinical background: MAK and RT;
 \item Methodology - model development: MAK, MMO, and RT;
  \item Formal analysis - proof of theorems: MAK and MMO;
 \item Methodology - algorithms and code: MAK and MMO;
\item Investigation - numerical analysis: MAK;
	\item Supervision: MMO and RT;
\item Writing - original draft: MAK and MMO;
\item Writing - review \& editing: MAK, MMO, and RT.
\end{itemize}

\bigskip\noindent{\bf Declaration of competing interests}\\
\noindent The authors have no competing interests to declare that are relevant to the content of this article.

\bigskip\noindent{\bf Funding}\\
\noindent This research did not receive any specific grant from funding agencies in the public, commercial, or not-for-profit sectors.

\begin{appendices}

\section{}\label{AppendixA}
\renewcommand{\theequation}{\ref{AppendixA}.\arabic{equation}}
\setcounter{equation}{0}

\begin{algorithm}[H]
\caption{Evaluate $\{\bpi_n\}_{n=0,1,\ldots,N}$ \hfill(adapted from Aksamit et al.~\cite{aksamit2024sensitivities})}
\label{stationary_algorithm}
\begin{algorithmic}[1]

\Input ${\bf Q}$
\Output $\{\bpi_n\}_{n=0,1,\ldots,N}$

\State Compute $\widehat{\bf R}^{(0)} = -{\bf Q}^{[1,0]}({\bf Q}^{[0,0]})^{-1}$

\For{$n = 1, \ldots, N-1$}
\State Compute $\widehat{\bf R}^{(n)} =
-{\bf Q}^{[n+1,n]}
(\widehat{\bf R}^{(n-1)}{\bf Q}^{[n-1,n]}+{\bf Q}^{[n,n]})^{-1}$
\EndFor

\State Compute $\bpi_N$ by solving
\begin{equation}
\begin{cases}
\bpi_N(\widehat{\bf R}^{(N-1)}{\bf Q}^{[N-1,N]}+{\bf Q}^{[N,N]}) = {\bf 0}, \\
\bpi_N\Big({\bf 1}+\sum_{n=0}^{N-1}\prod_{k=N-1}^{n}\widehat{\bf R}^{(k)}{\bf 1}\Big) = 1.
\end{cases}
\label{eq:piN}
\end{equation}

\For{$n = N-1, \ldots, 0$}
\State Compute
\[
\bpi_n = \bpi_{n+1}\widehat{\bf R}^{(n)}
\]
\EndFor

\end{algorithmic}
\end{algorithm}

\begin{algorithm}[H]
\caption{Evaluate $\widetilde{\bf G}^{n,n-k}(s)$ \hfill(adapted from Aksamit et al.~\cite{aksamit2024sensitivities})}
\label{Gs_algorithm}
\begin{algorithmic}[1]

\Input ${\bf Q}, n, k$
\Output $\widetilde{\bf G}^{n,n-k}(s)$

\State Compute
\[
\widetilde{\bf G}^{N,N-1}(s) =
-({\bf Q}^{[N,N]} - s{\bf I})^{-1}{\bf Q}^{[N,N-1]}
\]

\For{$i = N-1, \ldots, n-k+1$}
\State Compute $\widetilde{\bf G}^{i,i-1}(s) =
-\Big({\bf Q}^{[i,i]} - s{\bf I}
+ {\bf Q}^{[i,i+1]}\widetilde{\bf G}^{i+1,i}(s)\Big)^{-1}
{\bf Q}^{[i,i-1]}$
\EndFor

\State Compute
\[
\widetilde{\bf G}^{n,n-k}(s) =
\widetilde{\bf G}^{n,n-1}(s)
\widetilde{\bf G}^{n-1,n-2}(s)
\cdots
\widetilde{\bf G}^{n-k+1,n-k}(s)
\]

\end{algorithmic}
\end{algorithm}

\begin{algorithm}[H]
\caption{Evaluate $\widetilde{\bf H}^{n,n+k}(s)$ \hfill (adapted from Aksamit et al.~\cite{aksamit2024sensitivities})}
\label{Hs_algorithm}
\begin{algorithmic}[1]

\Input ${\bf Q}, n, k$
\Output $\widetilde{\bf H}^{n,n+k}(s)$

\State Compute
\[
\widetilde{\bf H}^{0,1}(s) =
-({\bf Q}^{[0,0]} - s{\bf I})^{-1}{\bf Q}^{[0,1]}
\]

\For{$i = 1, \ldots, n+k-1$}
\State Compute $\widetilde{\bf H}^{i,i+1}(s) =
-\Big({\bf Q}^{[i,i]} - s{\bf I}
+ {\bf Q}^{[i,i-1]}\widetilde{\bf H}^{i-1,i}(s)\Big)^{-1}
{\bf Q}^{[i,i+1]}$
\EndFor

\State Compute
\[
\widetilde{\bf H}^{n,n+k}(s) =
\widetilde{\bf H}^{n,n+1}(s)
\widetilde{\bf H}^{n+1,n+2}(s)
\cdots
\widetilde{\bf H}^{n+k-1,n+k}(s)
\]

\end{algorithmic}
\end{algorithm}

\section{} \label{eq:equations}
\renewcommand{\theequation}{\ref{eq:equations}.\arabic{equation}}
\setcounter{equation}{0}

\subsection{Equation for Algorithm~\ref{alg:C_n_nminusk_der_mem}}

\begin{eqnarray}
\lefteqn{\frac{\partial }{\partial \btheta}\widetilde{\bf C}_{\mathcal{A}}^{n,n-1}(s;\btheta)}
\nonumber
\\
&=&
\left({\bf Q}^{[n,n]}(\btheta)-s{\bf C}_n\times I(n\in\mathcal{A})
+{\bf Q}^{[n,n+1]}(\btheta)\times{\bf B}
\right)^{-1}
\nonumber\\
&&
\times
\left(\frac{\partial {\bf Q}^{[n,n]}(\btheta)}{\partial \btheta}
+
\frac{\partial{\bf Q}^{[n,n+1]}(\btheta)}{\partial \btheta} \times \left({\bf I}_k
\otimes \widetilde{\bf C}_{\mathcal{A}}^{n+1,n}(s,\btheta)\right) + {\bf Q}^{[n,n+1]}(\btheta) \times {\bf D} \right)
\nonumber\\
&&
\times
\left(
{\bf I}_k
\otimes \left({\bf Q}^{[n,n]}(\btheta)-s{\bf C}_n\times I(n\in\mathcal{A})
+{\bf Q}^{[n,n+1]}(\btheta)\times{\bf B}
\right)^{-1}
\right)
\left(
{\bf I}_k\otimes {\bf Q}^{[n,n-1]}(\btheta)
\right)
\nonumber
 \\
&&
	-
	\left({\bf Q}^{[n,n]}(\btheta)-s{\bf C}_n
 \times I(n\in\mathcal{A})
+{\bf Q}^{[n,n+1]}(\btheta)\times{\bf B}
\right)^{-1}
	\times
	\frac{\partial {\bf Q}^{[n,n-1]}(\btheta)}{\partial \btheta}. 
\label{eq:alg_Cnnminusk_mem}
\end{eqnarray}

\subsection{Equation for Algorithm~\ref{alg:C_n_nplusk_der_mem}}

\begin{eqnarray}
\lefteqn{\frac{\partial }{\partial \btheta}\widetilde{\bf C}_{\mathcal{A}}^{n,n+1}(s;\btheta)}
\nonumber
\\
&=&
\left({\bf Q}^{[n,n]}(\btheta)-s{\bf C}_n\times I(n\in\mathcal{A})
+{\bf Q}^{[n,n-1]}(\btheta) \times {\bf B}
\right)^{-1}
\nonumber\\
&&
\times
\left(\frac{\partial {\bf Q}^{[n,n]}(\btheta)}{\partial \btheta}
+
\frac{\partial{\bf Q}^{[n,n-1]}(\btheta)}{\partial \btheta} \times \left({\bf I}_k
\otimes {\bf B}\right) + {\bf Q}^{[n,n-1]}(\btheta) \times {\bf D} \right)
\nonumber\\
&&
\times
\left(
{\bf I}_k
\otimes \left({\bf Q}^{[n,n]}(\btheta)-s{\bf C}_n\times I(n\in\mathcal{A})
+{\bf Q}^{[n,n-1]}(\btheta)\times {\bf B}
\right)^{-1}
\right)
\left(
{\bf I}_k\otimes {\bf Q}^{[n,n+1]}(\btheta)
\right)
\nonumber
 \\
&&
	-
	\left({\bf Q}^{[n,n]}(\btheta)-s{\bf C}_n
 \times I(n\in\mathcal{A})
+{\bf Q}^{[n,n-1]}(\btheta)\times {\bf B}
\right)^{-1}
	\frac{\partial {\bf Q}^{[n,n+1]}(\btheta)}{\partial \btheta}. 
\label{eq:alg_Cnplusk_mem}
\end{eqnarray}

\end{appendices}

\bibliography{references}

\end{document}